\newtheorem{theorem}{Theorem}[section]
\newtheorem{definition}[theorem]{Definition}
\newtheorem{lemma}[theorem]{Lemma}
\newtheorem{remark}[theorem]{Remark}
\newtheorem{example}[theorem]{Example}
\newtheorem{notation}[theorem]{Notation}
\newcommand{\Ob}{\mathrm{Ob}}
\newcommand{\F}{\mathfrak{F}}
\newcommand{\LL}{\mathfrak{L}}
\newcommand{\A}{\mathcal{A}}
\newcommand{\C}{\mathcal{C}}
\newcommand{\z}{\mathbf{z}}
\newcommand{\Set}{\mathbf{Set}}
\newcommand{\Bicat}{\mathbf{Bicat}}
\newcommand{\Cat}{\mathbf{Cat}}
\newcommand{\aso}{\boldsymbol{a}}
\newcommand{\br}{\boldsymbol{r}}
\newcommand{\bl}{\boldsymbol{l}}
\newcommand{\BB}{\mathrm{B}}
\newcommand{\B}{\mathcal{B}}
\newcommand{\lfunc}{\ensuremath{LaxFunc}}
\begin{document}
\title{\em Bicategorical homotopy fiber sequences}

\author{M. Calvo, A.M. Cegarra, B.A. Heredia}

\thanks{This work has been supported by DGI
of Spain, Project MTM2011-22554. Also, the first author by FPU grant FPU12-01112, and third author by
FPU grant AP2010-3521.}

\address{\newline
Departamento de \'Algebra, Facultad de Ciencias, Universidad de
Granada.
\newline 18071 Granada, Spain \newline
 mariacc@ugr.es \ acegarra@ugr.es\ baheredia@ugr.es }

\subjclass[2000]{18D05, 18D10, 55P15, 55R65}

\keywords{bicategory, lax functor, classifying space, homotopy
fiber}

\begin{abstract} Small B\'{e}nabou's  bicategories and, in particular, Mac Lane's monoidal
categories, have well-understood classifying spaces, which give
geometric meaning to their cells. This paper contains some
contributions to the study of the relationship between bicategories
and the homotopy types of their classifying spaces. Mainly,
generalizations are given of Quillen's Theorems A and B to lax
functors between bicategories.

\begin{center}{\em Dedicated to Hvedri Inassaridze on his 80th
birthday}\end{center}
\end{abstract}

\maketitle

\section{Introduction and summary}
Categorical structures have numerous applications outside  of
category theory proper as they occur naturally in many branches of
mathematics, physics and computer science. In particular,
higher-dimensional categories provide a suitable tool for the
treatment of an extensive list of issues with recognized
mathematical interest in algebraic topology, algebraic geometry,
algebraic $K$-theory, string field theory, conformal field theory
and statistical mechanics, as well as in the study of geometric
structures on low-dimensional manifolds. See the recent book {\em
Towards Higher Categories} \cite{Baez-May}, which provides a useful
background for this subject.

Like small categories \cite{quillen}, small B\'{e}nabou's
bicategories and, in particular, Mac Lane's monoidal categories, are
closely related to topological spaces through the classifying space
 construction. This assigns to each  bicategory $\B$
a CW-complex $\BB \B$ whose cells give a natural geometric meaning
to the cells of the bicategory \cite{ccg-1}. By this correspondence,
for example, bigroupoids correspond to homotopy 2-types
(CW-complexes whose $n^{\mathrm{th}}$ homotopy group at any base
point vanishes for $n\geq 3$), and arbitrary monoidal categories  to
delooping  of the classifying spaces of the underlying categories
(up to group completion).  The process of taking classifying spaces
of bicategories reveals  a way to transport categorical coherence to
homotopical coherence since the construction $\B\mapsto \BB\B$
preserves products, any lax or oplax functor between bicategories,
$F:\A\to\B$, induces a continuous map on classifying spaces $\BB
F:\BB\A\to \BB\B$,  any lax or oplax
  transformation between these,
  $\alpha:F\Rightarrow F'$, induces a homotopy between the corresponding induced maps
  $\BB\alpha:\BB F\Rightarrow \BB F'$, and any modification  between
   these, $\varphi:\alpha\Rrightarrow\beta$, a homotopy $\BB\varphi:\BB\alpha\Rrightarrow\BB\beta$
    between them. Thus, if $\A$ and $\B$ are biequivalent bicategories or
    if a homomorphism $\A\to \B$ has a biadjoint, then their associated classifying spaces
    are homotopy equivalent.

In this paper we show  the subtlety of this theory by analyzing the
homotopy fibers of the map $\BB F:\BB \A\to \BB\B$, which is induced
by a lax functor between small bicategories  $F:\A\to \B$, such as
Quillen did in \cite{quillen} where he stated his celebrated
Theorems A and B for the classifying spaces of small categories.
Every object $b\in \Ob\B$ has an associated {\em homotopy fiber
bicategory} $F\!\!\downarrow{\!_b}$ whose objects are the 1-cells
$f:Fa\to b$ in $\B$, with $a$ an object of $\A$; the 1-cells consist
of all triangles
$$
\xymatrix@C=10pt@R=14pt{ Fa\ar[rr]^{Fu}\ar[rd]_{f}&
\ar@{}[d]|(.27){\beta}|(.48){\Rightarrow}&Fa'\ar[ld]^{f'}\\
 &b& }
$$
with $u:a\to a'$ a 1-cell in $\A$ and $\beta:f\Rightarrow f'\circ
Fu$ a 2-cell in $\B$, and the 2-cells of this bicategory are
commutative diagrams of 2-cells in $\B$ of the form
$$
\xymatrix@C=18pt@R=18pt{
&f\ar@2[ld]_{\beta}\ar@2[rd]^{\beta'}& \\
f'\circ Fu\ar@2[rr]^{1_{f'}\circ F\alpha}&&f'\circ Fu'
}
$$
with $\alpha:u\Rightarrow u'$ a 2-cell in $\A$. Compositions,
identities, and the structure associativity and unit constraints in
$F\!\!\downarrow{\!_b}$ are canonically provided by those of the
involved  bicategories and the structure 2-cells of the lax functor
(see Section \ref{theB} for details). For the case $F=1_\B$, we have
the {\em comma} bicategory $\B\!\!\downarrow{\!_b}$.  Then, we prove
(see  Theorem \ref{B}):
\begin{quote}{\em ``For every object $b$ of the bicategory $\B$, the induced square
$$
\begin{array}{c}
\xymatrix{\BB(F\!\!\downarrow{\!_b})\ar[d]\ar[r]&\BB(\B\!\!\downarrow{\!_b})
\ar[d]
\\
\BB\A\ar[r]^{\BB F}&\BB\B}\end{array}
$$
is homotopy cartesian if and only if all the  maps $\BB
p:\BB(F\!\!\downarrow{\!_b})\to \BB(F\!\!\downarrow{\!_{b'}})$,
induced by the 1-cells $p:b\to b'$ of $\B$, are homotopy
equivalences.\!"}
\end{quote}

Since the spaces $\BB(\B\!\!\downarrow{\!_b})$ are contractible
(Lemma \ref{cont}), the result above tells us that, under the
minimum necessary conditions, the classifying space of the homotopy
fiber bicategory $F\!\!\downarrow{\!_b}$ is homotopy equivalent to
the homotopy fiber of $\BB F:\BB\A\to\BB \B$ at its 0-cell $\BB b\in
\BB\B$. Thus, the name `homotopy fiber bicategory' is well chosen.
Furthermore, as a corollary, we obtain (see Theorem \ref{A}):
\begin{quote}{\em ``\! If all the spaces
$\BB(F\!\!\downarrow{\!_b})$ are contractible, then the map $\BB
F:\BB\A\to\BB \B$ is a homotopy equivalence.\!"}
\end{quote}

When the bicategories $\A$ and $\B$ involved in the results above
are actually categories, then they are reduced to the well-known
Theorems A and B by Quillen \cite{quillen}. Indeed, the methods used
in the proof of Theorem \ref{B} we give follow  similar lines to
those used by Quillen in his proof of Theorem B. However, the
situation with bicategories is more complicated than with
categories. Let us stress the two main differences between both
situations: On one hand, every 2-cell $\sigma:p\Rightarrow q:b\to
b'$ in $\B$ gives rise to a homotopy $$\BB\sigma:\BB p\simeq\BB
q:\BB(F\!\!\downarrow{\!_b})\to\BB(F\!\!\downarrow{\!_{b'}})$$ that
must be taken into account.  On the other hand,  for $p:b\to b'$ and
$p':b'\to b''$ any two composable 1-cells in $\B$,  we have a
homotopy
$$\BB p'\circ \BB p \simeq \BB(p'\circ p):\BB(F\!\!\downarrow{\!_b})\to \BB(F\!\!\downarrow{\!_{b''}}),$$
rather than the identity $\BB p'\circ \BB p = \BB(p'\circ p)$, as it
happens in the category case. This unfortunate behavior is due to
the fact that neither is the horizontal composition of 1-cells in
the bicategories involved  (strictly) associative nor does the lax
functor preserve (strictly) that composition. Therefore, in the
process of taking homotopy fiber bicategories, $F\!\!\downarrow:
b\mapsto F\!\!\downarrow{\!_b}$, we are forced to deal with {\em lax
bidiagrams of bicategories}
$$
\F:\B\to\Bicat,\hspace{0.3cm} b\mapsto \F_b,
$$
which are a type of lax functors in the sense of Gordon, Power and
Street \cite{g-p-s} from the bicategory $\B$ to the tricategory of
small bicategories, rather than ordinary diagrams of small
categories, that is, functors $\F:\B\to \Cat$, as it happens  when
both $\A$ and $\B$ are categories.

After this introductory Section 1, the paper is organized in four
sections. Section 2 is an attempt to make the paper as
self-contained as possible; hence, at the same time as we set
notations and terminology, we define and describe in detail the kind
of lax functors $\F:\B\to \Bicat$ we are going to work with. Section
3 is very technical but crucial to our discussions. It is mainly
dedicated to describing in detail a {\em bicategorical Grothendieck
construction}, which assembles any lax bidiagram of bicategories
$\F:\B \to\Bicat$ into a bicategory $\int_\B\F$. This is similar to
what the ordinary  construction, due to Grothendieck
\cite{groth,grothendieck}, Giraud \cite{giraud-2,giraud}, and
Thomason \cite{thomason} on lax diagrams  of categories with the
shape of any given category. By means of this higher Grothendieck
construction, in Section 4 we establish the third relevant result of
the paper, namely (see Theorem \ref{th1}):
\begin{quote}{\em ``\!If $\F:\B\to \Bicat$ is a lax bidiagram of bicategories such that each 1-cell
$p:b\to b'$ in the bicategory $\B$ induces a homotopy equivalence
$\BB\F_b\simeq \BB\F_{b'}$, then, for every object $b\in \Ob\B$,
there is an induced homotopy cartesian square
$$
\xymatrix{
\BB\F_b\ar[r]\ar[d]&\BB\!\int_\B\F\ar[d]\\ pt \ar[r]^{\BB b}&\BB\B.}
$$
That is, the classifying space $\BB\F_b$ is homotopy equivalent to
the homotopy fiber of the  map induced on classifying spaces by the
projection homomorphism $\int_\B\F \to \B$ at the 0-cell
corresponding to the object $b$.\!"}
\end{quote}

Thanks to Thomason's Homotopy Colimit Theorem \cite{thomason},  when
$\B$ is a small category and $\F$ values in $\Cat$, the result above
is equivalent to the relevant lemma used by Quillen in his proof of
Theorem B. Similarly here, the proof of the bicategorical Theorem B,
given in the last Section 5, essentially consists of two steps:
First, to apply that key result above to the lax bidiagram of
homotopy fiber bicategories, $F\!\!\downarrow\,:\B\to\Bicat$, of a
lax functor $F:\A\to \B$. Second,  to prove that there is a
 homomorphism $\int_\B F\!\!\downarrow\,\, \to \A$
inducing a homotopy equivalence $\BB(\int_\B F\!\!\downarrow)\simeq
\BB\A$, so that the bicategory $\int_\B F\!\!\downarrow$ may be
thought of as the ``total bicategory" of the lax functor $F$.
Section 5 also includes some applications to classifying spaces of
monoidal categories. For instance, we find a new proof of the
well-known result by Mac Lane \cite{mac} and Stasheff \cite{sta}:

\begin{quote}{\em ``\! Let
$(\mathcal{M},\otimes)=(\mathcal{M},\otimes,I,\aso,\bl,\br)$ be a
monoidal category. If multiplication for each object $x\in
\Ob\mathcal{M}$, $y\mapsto y\otimes x$, induces a homotopy
autoequivalence on $\BB\mathcal{M}$, then there is a homotopy
equivalence
$$\BB \mathcal{M}\simeq \Omega\BB(\mathcal{M},\otimes),$$ between the
classifying space of the underlying category and the loop space of
the classifying space of the monoidal category.\!"}
\end{quote}

\section{Bicategorical preliminaries: Lax bidiagrams of
bicategories}\label{laxbid} In this paper we shall work with small
bicategories, and we refer the reader to the papers by B\'{e}nabou
\cite{benabou},  Street \cite{street}, Gordon-Power-Street
\cite{g-p-s}, Gurski \cite{gurski}, and Leinster \cite{leinster},
for the background. The bicategorical conventions and the notations
that we use along the paper are the same as in \cite[\S 2.1]{ccg-2}
and \cite[\S2.4]{ccg-1}. Thus,  given  any bicategory $\B$, the
composition in each hom-category $\B(a,b)$, that is, the vertical
composition of 2-cells, is denoted by $\beta\cdot \alpha$, while the
symbol $\circ$ is used to denote the horizontal composition functors
$\B(b,c)\times \B(a,b) \overset{\circ}\to \B(a,c)$. Identities are
denoted as $1_f:f\Rightarrow f$, for any 1-cell $f:a\to b$, and
$1_a:a\to a$, for any object $a\in\Ob\B$. The associativity, right
unit, and left unit constraints of the bicategory are  respectively
denoted by the letters $\boldsymbol{a}$, $\boldsymbol{r}$, and
$\boldsymbol{l}$.

We will use that, in any bicategory, the commutativity of the two
triangles
\begin{equation}\label{tri2}\begin{array}{cc}\xymatrix@C=3pt@R=15pt{(1\circ g)\circ f
\ar@2{->}[rr]^{\aso}\ar@2{->}[rd]_{\bl\circ 1}&
&1\circ (g\circ f)
\ar@2{->}[ld]^{\bl}\\&g\circ f&}
&\xymatrix@C=5pt@R=20pt{(g\circ f)\circ 1
\ar@2{->}[rr]^{\aso}\ar@2{->}[rd]_{\br}&
&g\circ(f\circ 1)
\ar@2{->}[ld]^{1\circ \,\br}\\&g\circ f&}
\end{array}
\end{equation}
and the equality
\begin{equation}\label{equide}
\br_1=\bl_1: 1\circ 1\cong 1
\end{equation}
are consequence of the other axioms (this is not obvious, but a
proof can be done paralleling the given, for monoidal categories, by
Joyal and Street in \cite[Proposition 1.1]{joyal-street}).

A {\em lax functor} is usually written as a pair $
F=(F,\widehat{F}):\A \to \B$ since we will generally denote its
structure constraints by $$\widehat{F}_{g,f}:Fg\circ Ff\Rightarrow
F(g\circ f),\hspace{0.4cm} \widehat{F}_a:1_{Fa}\Rightarrow F1_a.$$
The lax functor is termed a {\em pseudo functor} or {\em
homomorphism} whenever all the structure constraints $\widehat{F}$
are invertible. If the unit constraints $\widehat{F}_a$ are all
identities, then the lax functor is qualified as (strictly)  {\em
unitary} or {\em normal} and if, moreover, the constraints
$\widehat{F}_{g,f}$ are also identities, then $F$ is called a
$2$-{\em functor}.

If $F,G:\A \to \B$ are lax functors, then we follow the convention
of \cite{g-p-s} in what is meant by a {\em lax transformation}
${\alpha=(\alpha,\widehat{\alpha}):F\Rightarrow G}$. Thus, $\alpha$
consists of morphisms ${\alpha a:Fa\to Ga}$, $a\in \Ob\A$, and of
2-cells $\widehat{\alpha}_f:\alpha b\circ Ff\Rightarrow Gf\circ
\alpha a$, subject to the usual axioms. When the 2-cells
$\widehat{\alpha}$ are all invertible, we say that
$\alpha:F\Rightarrow G$ is a {\em pseudo transformation}.

In accordance with the orientation of the naturality 2-cells chosen,
if ${\alpha,\beta:F\Rightarrow G}$ are two lax transformations, then
a {\em modification} $\sigma:\alpha\Rrightarrow\beta$ will consist
of 2-cells $\sigma a:\alpha a\Rightarrow\beta a$, $a\in
\mbox{Ob}\A$, subject to the commutativity condition, for any
morphism $f:a\to b$ of $\A$:
 $$
\begin{array}{c}
\xymatrix@R=35pt@C=35pt{
Fa
\ar@/_1.2pc/[r]
\ar@{}@<-19pt>[r]|(.4){\alpha a}
\ar@{}[r]_{\Uparrow\sigma}
\ar[r]^{\beta a}\ar[d]_{F\!f}
&Ga\ar[d]^{G\!f}
\ar@{}@<30pt>[d]|{ =}
\\ Fb
\ar@{}@<-25pt>[u]_(.26){\Rightarrow}_(.4){\widehat{\alpha}}
\ar[r]_{\alpha b}&Gb
}\hspace{0.3cm}
\xymatrix@R=35pt@C=35pt{
Fa\ar[r]^{\beta a}\ar[d]_{F\!f}
\ar@{}@<35pt>[d]_(.4){\Rightarrow}_(.25){\widehat{\beta}}
&Ga\ar[d]^{G\!f}
\\ Fb\ar@/^1.2pc/[r]
\ar@{}@<19pt>[r]|(.6){\beta b}
\ar@{}[r]^{\Uparrow\sigma}
\ar[r]_{\alpha b}&Gb.
}
\end{array}
$$

$\Bicat$ denotes the tricategory of of bicategories,  homomorphisms,
pseudo natural transformations, and modifications. In the structure
of $\Bicat$ we use, the composition of pseudo transformations is
taken to be
$$\big(\xymatrix@C=8pt{\B \ar@/^0.6pc/[rr]^{G}\ar@{}[rr]|{\Downarrow \beta}
\ar@/_0.6pc/[rr]_{ G'} &  &\C}\big)\big(\xymatrix@C=8pt{\A
\ar@/^0.6pc/[rr]^{F}\ar@{}[rr]|{\Downarrow \alpha}
\ar@/_0.6pc/[rr]_{ F'} &  &\B}\big)=\big(\xymatrix@C=9pt{\A
\ar@/^0.7pc/[rr]^{GF}\ar@{}[rr]|{\Downarrow \beta\alpha}
\ar@/_0.7pc/[rr]_{ G'\!F'} &  &\C}\big), $$
 where  $\beta
\alpha=\beta F'\circ G\alpha:\big(\xymatrix{GF\ar@2[r]^{G\alpha}&
GF'\ar@2[r]^{\beta F'}& G'\!F'}\big)$, but note the existence of the
useful invertible modification
\begin{equation}\label{4}\begin{array}{l}\xymatrix@R=15pt@C=20pt{GF\ar@{}@<-4pt>[rd]^(.5){\Rrightarrow}
\ar@{=>}[r]^{\beta F}\ar@{=>}[d]_{G\alpha}&G'F
\ar@{=>}[d]^{G'\alpha}\\ GF'\ar@{=>}[r]^{\beta
F'}&G'F'}\end{array}\end{equation} whose component  at an object $a$
of $\A$, is $\widehat{\beta}_{\alpha a}$, the component of $\beta$
at the morphism $\alpha a$.

\subsection{Lax bidiagrams of bicategories}

The next concept of fibered bicategory in bicategories is the basis
of most of our subsequent discussions.
 Let $\B$ be a bicategory. Regarding $\B$ as a tricategory in which
the $3$-cells are all identities, we define a \emph{lax bidiagram of
bicategories}
 \begin{equation}\label{lb}\F=(\F, \chi,\xi,\omega,\gamma,\delta):\B^{\mathrm{op}}\to\Bicat\end{equation}
 to be a contravariant lax functor of tricategories from $\B$ to  $\Bicat$,
 all of whose coherence modifications are invertible. More explicitly, a lax bidiagram of bicategories $\F$ as
above consists of the following data:

\vspace{0.2cm} $(\mathbf{D1})$ for each object $b$ in $\mathcal{B}$,
a bicategory \hspace{0.1cm}$\F_b$;

\vspace{0.2cm} $(\mathbf{D2})$ for each 1-cell $f:a\to b$ of $\B$, a
homomorphism \hspace{0.1cm}$f^*:\F_b\to \F_a$;

\vspace{0.2cm} $(\mathbf{D3})$ for each 2-cell $\xymatrix@C=0.5pc{a
\ar@/^0.7pc/[rr]^{ f} \ar@/_0.7pc/[rr]_{
g}\ar@{}[rr]|{\Downarrow\alpha} &
 &b }$ of $\B$, a pseudo transformation \hspace{0.1cm}$\alpha^*:f^*\Rightarrow
 g^*$;
 \vspace{-0.5cm}$$\xymatrix@C=0.6pc{\F_b  \ar@/^0.8pc/[rr]^{ f^*}
\ar@/_0.8pc/[rr]_{ g^*}\ar@{}[rr]|(.55){\Downarrow\alpha^*} &
&\F_a}$$

$(\mathbf{D4})$ for each  two composable 1-cells
$\xymatrix@C=13pt{a\ar[r]^{f}&b\ar[r]^{g}&c}$ in the bicategory
$\B$, a pseudo transformation
\hspace{0.1cm}$\chi_{_{g,f}}:f^*g^*\Rightarrow (g\circ f)^*$;
$$\xymatrix@C=10pt@R=15pt{
&\F_c\ar[ld]_{g^*}\ar[rd]^{(g\,\circ f)^*}\ar@{}[d]|(.65){\Rightarrow}
\ar@{}[d]|(.4){\chi}& \\
\F_b\ar[rr]_{f^*}&&\F_a}$$

$(\mathbf{D5})$ for each object $b$ of $\B$, a pseudo transformation
\hspace{0.1cm}$\chi_{_b}: 1_{\F_b}\Rightarrow 1_b^*$;
$$\xymatrix@C=0.6pc{\F_b  \ar@/^0.8pc/[rr]^{ 1_{\F_b}}
\ar@/_0.8pc/[rr]_{ 1_b^*}\ar@{}[rr]|{\Downarrow\chi} &  &\F_b}$$

\vspace{-0.6cm}$(\mathbf{D6})$ for any two vertically composable 2-cells
$\xymatrix{a\ar@/^/[r]^f \ar@/_/[r]_g \ar@{}[r]|{\Downarrow\alpha} & b}$ and $\xymatrix{a\ar@/^/[r]^g \ar@/_/[r]_h \ar@{}[r]|{\Downarrow\beta} & b}$
in $\B$,
 an invertible modification $\xi_{_{\beta,\alpha}}\!:\beta^*\circ \alpha^*
 \Rrightarrow  (\beta\cdot  \alpha)^*$;
$$\xymatrix@C=15pt@R=15pt{
&{f}^*\ar@2[ld]_{\alpha^*}\ar@2[rd]^{(\beta\cdot\alpha)^*}\ar@{}[d]|(.65){\Rrightarrow}
\ar@{}[d]|(.4){\xi}& \\
g^*\ar@2[rr]_{\beta^*}&&{h}^*}$$

$(\mathbf{D7})$ for each 1-cell $f:a\to b$ of $\B$, an invertible
modification \hspace{0.1cm}$\xi_{_f}:1_{f^*}\Rrightarrow 1_f^*$;
$$
\xymatrix{f^* \ar@{=>}@/_1.2pc/[d]_{1_{f^*} }
\ar@{=>}@/^1.2pc/[d]^{1_{f}^* }\ar@{}[d]|{\Rrightarrow}\ar@{}[d]|(.32){\xi}\\f^*}
$$

$(\mathbf{D8})$ for every two horizontally composable 2-cells
$\xymatrix@C=0.5pc{a\ar@/^0.7pc/[rr]^f \ar@/_0.7pc/[rr]_{h}
\ar@{}[rr]|{\Downarrow\alpha}&  &
b\ar@/^0.7pc/[rr]^{g}\ar@/_0.7pc/[rr]_{k}
\ar@{}[rr]|{\Downarrow\beta}& & c}$, an invertible modification
\hspace{0.1cm}$\chi_{_{\beta,\alpha}}:(\beta\circ\alpha)^*\!\circ
\chi_{_{g,f}}\Rrightarrow\chi_{_{k,h}}\circ
 (\alpha^*\beta^*)$;
$$
\xymatrix{f^*\,g^*\ar@2[r]^{\alpha^*\beta^*}\ar@2[d]_\chi\ar@{}@<-22pt>[r]|{\Rrightarrow}
\ar@{}@<-15pt>[r]|{\chi}
           & h^*\,k^*\ar@2[d]^{\chi}\\
                    (g\circ f)^*\ar@2[r]_{(\beta\circ \alpha)^*} & (k\circ h)^*}
$$

$(\mathbf{D9})$ for every three composable 1-cells
$\xymatrix@C=13pt{a\ar[r]^{f}&b\ar[r]^g&c\ar[r]^h&d}$ in $\B$, an
invertible modification \hspace{0.1cm}$\omega_{_{h,g,f}}:\,
\aso^*\!\circ (\chi_{_{h\circ g,f}}\circ f^*\chi_{_{h,g}}
)\Rrightarrow \chi_{_{h,g\circ f}}\circ \chi_{_{g,f}}h^*$;
$$
\xymatrix@C=20pt{f^*g^*h^*\ar@2[rr]^{\chi h^*}\ar@2[d]_{f^*\chi}
\ar@{}@<-22pt>[rr]|{\Rrightarrow}\ar@{}@<-16pt>[rr]|{\omega}&&(g\circ f)^*h^*\ar@2[d]^{\chi}\\
f^*(h\circ g)^*\ar@2[r]^-{\chi}&((h\circ g)\circ f)^* \ar@2[r]^{\aso^*}&(h\circ (g\circ f))^* }
$$

$(\mathbf{D10})$ for any 1-cell $f:a\to b$ of $\B$, two invertible
modifications
$${\gamma_f:\bl^*_f\circ (\chi_{_{1_b,f}}\circ
f^*\chi_{_b})\Rrightarrow 1_{f^*}},\ \  \delta_f:
\br^*_f\circ (\chi_{_{f,1_a}}\circ \chi_{_a}f^*)\Rrightarrow 1_{f^*}.$$
$$
\xymatrix{ f^*1_b^*\ar@2[d]_{\chi}\ar@{}@<-22pt>[r]|{\Rrightarrow}
\ar@{}@<-15pt>[r]|{\gamma}& f^*\ar@2[d]^{1_{f^*}}\ar@2[l]_-{f^*\!\chi}
\ar@2[r]^-{\chi f^*} \ar@{}@<-22pt>[r]|{\Lleftarrow}
\ar@{}@<-15pt>[r]|{\delta}
&1_a^*f^*
\ar@2[d]^{\chi}\\ (1_b\circ f)^*\ar@2[r]_{\bl^*}&f^*&(f\circ 1_a)^*\ar@2[l]^{\br^*}    }
$$

\vspace{0.2cm} These data must satisfy the following coherence
conditions:

\vspace{0.2cm} $(\mathbf{C1})$ for any three composable $2$-cells
$\xymatrix@C=13pt{f\ar@2[r]^{\alpha}&g\ar@2[r]^{\beta}&h\ar@2[r]^(.3){\zeta}&k:a\to
b}$ in $\B$, the  equation on modifications below holds;
$$
\xymatrix@C=35pt@R=35pt{g^*\ar@2[d]_{\beta^*}
\ar@{}@<-15pt>[r]|(0.35){\Rrightarrow}\ar@{}@<-9pt>[r]|(0.3){\xi}&
f^*\ar@2[l]_{\alpha^*}\ar@2[d]^{(\zeta\cdot\beta\cdot\alpha)^*}\ar@2[ld]|{_{(\beta\cdot\alpha)^*}}&
\ar@{}@<5pt>[d]|{\textstyle =} &
g^*\ar@2[d]_{\beta^*}\ar@2[rd]|{_{(\zeta\cdot\beta)^*}}
\ar@{}@<-15pt>[r]|(0.7){\Rrightarrow}
\ar@{}@<-9pt>[r]|(0.65){\xi}&k^*\ar@2[l]_{\alpha^*}\ar@2[d]^{(\zeta\cdot\beta\cdot\alpha)^*}\\
h^*\ar@2[r]_{\zeta^*}\ar@{}@<10pt>[r]|(0.75){\Rrightarrow}\ar@{}@<17pt>[r]|(0.7){\xi}&
k^*& & h^*\ar@{}@<10pt>[r]|(0.3){\Rrightarrow}\ar@{}@<17pt>[r]|(0.25){\xi}\ar@2[r]_{\zeta^*}&k^*}
$$

$(\mathbf{C2})$ for any 2-cell
$\xymatrix@C=13pt{f\ar@2[r]^(.26){\alpha}&g:a\to b}$ of $\B$,
$$
\xymatrix{\ar@{}@<-9pt>[rr]^(.2){1_{f^*}}\ar@{}@<20pt>[d]|(.53){\Rrightarrow}|(.36){\xi}&
f^*\ar@2@/_0.9pc/[ld]\ar@2[rd]^{\alpha^*}
\ar@{=>}@/^0.7pc/[ld]
\ar@{}@<8pt>[d]|(.7){\Rrightarrow}|(.5){\xi}
& \ar@{}@<10pt>[d]|(.5){\textstyle =}\ar@{}@<28pt>[d]|(.5){\textstyle \br_{\alpha^*}\,,}
\\f^*\ar@2[rr]_{\alpha^*}\ar@{}@<0.5pt>[rr]^(.37){1_f^*}&&g^*} \hspace{0.4cm}
\xymatrix{\ar@{}@<-9pt>[rr]^(.2){1_{g^*}}\ar@{}@<20pt>[d]|(.53){\Rrightarrow}|(.36){\xi}&
g^*\ar@{}@<8pt>[d]|(.7){\Rrightarrow}|(.5){\xi} &
\ar@{}@<10pt>[d]|(.5){\textstyle =}\ar@{}@<28pt>[d]|(.5){\textstyle
\bl_{\alpha^*};}
\\g^*\ar@2@/_0.7pc/[ru] \ar@{=>}@/^0.9pc/[ru]
\ar@{}@<0.7pt>[rr]^(.42){1_{\!{g}}^*}&&{f}^*\ar@2[lu]_{\alpha^*}
\ar@2[ll]^{\alpha^*}}
$$

\begin{notation}{\em Thanks to conditions $(\mathbf{C1})$ and $(\mathbf{C2})$, for
each objects $a,b\in \Ob\B$, we have a homomorphism $\B(a,b)\to
\Bicat(\F_b,\F_a)$ such that

\vspace{-0.3cm}$$\xymatrix@C=0.5pc{a \ar@/^0.7pc/[rr]^{ f}
\ar@/_0.7pc/[rr]_{g}\ar@{}[rr]|{\Downarrow\alpha} &
 &b } \mapsto \xymatrix@C=0.6pc{\F_b  \ar@/^0.8pc/[rr]^{ f^*}
\ar@/_0.8pc/[rr]_{ g^*}\ar@{}[rr]|(.55){\Downarrow\,\alpha^*} &
&\F_a,}$$ and whose structure constraints are the deformations $\xi$
in $(\mathbf{D6})$ and $(\mathbf{D7})$. Then, whenever it is given a
commutative diagram in the category $\B(a,b)$ of the form
\begin{equation}\label{dn1}\xymatrix@C=15pt@R=20pt{f\ar@2[d]_{
\alpha_0}\ar@2[r]^{\beta_0}&g_1\ar@2[r]^{\beta_1}&\cdots\ar@2[r]&
g_n\ar@2[d]^{\beta_n}\\
f_1\ar@2[r]^{\alpha_1}&\cdots\ar@2[r]&f_m\ar@2[r]^{
\alpha_m}&g,}\end{equation} we will denote by
\begin{equation}\label{dn2}\xymatrix@C=15pt@R=20pt{f^*\ar@2[d]_{\alpha^*_0}
\ar@{}@<-20pt>[rrr]|{\cong} \ar@{}@<-13pt>[rrr]|{\xi}
\ar@2[r]^{\beta_0^*}&{g}^*_1\ar@2[r]^{{\beta}^*_1} &\cdots\ar@2[r]&
{g}^*_n\ar@2[d]^{{\beta}^*_n}\\
f_1^*\ar@2[r]^{\alpha_1^*}&\cdots\ar@2[r]&f_m^*\ar@2[r]^{
\alpha_m^*}&g^*}\end{equation} the invertible modification obtained
by an (any) appropriate composition of the modifications $\xi$ and
their inverses $\xi^{-1}$, once any particular bracketing in the
strings $\alpha_0^*,\ldots,\alpha_m^*$ and
$\beta_0^*,\ldots,\beta_n^*$ has been chosen. That diagram
(\ref{dn2}) is well defined from diagram (\ref{dn1}) is a
consequence of the coherence theorem for homomorphisms of
bicategories \cite[Theorem 1.6]{g-p-s}.

Furthermore, for any diagram  $\xymatrix@C=0.8pc{a\ar[r]^{f}&b
\ar@/^0.7pc/[rr]^{g}
\ar@/_0.7pc/[rr]_{g'}\ar@{}[rr]|{\Downarrow\alpha} &
 &c\ar[r]^h&d }$ in $\B$, we shall denote by
$$\chi_{_{\alpha,f}}: (\alpha\circ 1_f)^*\circ \chi_{_{g,f}}\Rrightarrow \chi_{_{g',f}}\circ f^*\alpha^*,\hspace{0.4cm}
 \chi_{_{h,\alpha}}: (1_h\circ \alpha)^*\circ \chi_{_{h,g}}\Rrightarrow \chi_{_{h,g'}}\circ \alpha^*h^* ,
$$
  $$
  \xymatrix@C=30pt{f^*g^*\ar@{}@<34pt>[d]|(.55){\Rrightarrow}
  \ar@{}@<34pt>[d]|(.4){\chi}
  \ar@2[r]^{f^*\!\alpha^*}\ar@2[d]_{\chi}&f^*g'^*\ar@2[d]^{\chi}\\
  (g\circ f)^*\ar@2[r]_{(\alpha\circ 1_f)^*}&(g'\circ f)^*
  } \hspace{0.4cm}
  \xymatrix@C=30pt{g^*h^*\ar@{}@<34pt>[d]|(.55){\Rrightarrow}
  \ar@{}@<34pt>[d]|(.4){\chi}
  \ar@2[r]^{\alpha^*\!h^*}\ar@2[d]_{\chi}&g'^*h^*\ar@2[d]^{\chi}\\
  (h\circ g)^*\ar@2[r]_{(1_h\circ \alpha)^*}&(h\circ g')^*
  }
  $$
the modifications obtained, respectively, by pasting the diagrams in
$\Bicat$ below.
$$
 \xymatrix@C=45pt{f^*g^*
 \ar@{}@<38pt>[dd]|(.75){\Rrightarrow}
  \ar@{}@<38pt>[dd]|(.65){\chi}
  \ar@2[dd]_{\chi}
 \ar@2[r]|(.6){\ 1_{\!f^*}\!\alpha^*}\ar@{}@<3pt>[r]^(.45){\cong}
 \ar@{}@<-16pt>[r]|{\cong}\ar@{}@<-9pt>[r]|{\xi 1_{\alpha^{\!*}}}
 \ar@2@/^1.5pc/[r]^{f^*\!\alpha^*}\ar@2@/_2pc/[r]_{1_f^*\alpha^*}&f^*g'^*
 \ar@2[dd]^{\chi} \\
 &\\
(g\circ f)^*\ar@2[r]_{(\alpha\circ 1_f)^*}&(g'\circ f)^*&
 }\hspace{0.4cm}
  \xymatrix@C=45pt{g^*h^*
 \ar@{}@<38pt>[dd]|(.75){\Rrightarrow}
  \ar@{}@<38pt>[dd]|(.65){\chi}
  \ar@2[dd]_{\chi}
 \ar@2[r]|(.6){\ \!\alpha^*\!1_{\!h^*}}
 \ar@{}@<3pt>[r]^(.45){\cong}
 \ar@{}@<-17pt>[r]|{\cong}\ar@{}@<-9pt>[r]|{1_{\alpha^{\!*}}\xi }
 \ar@2@/^1.5pc/[r]^{\alpha^*\!h^*}\ar@2@/_2pc/[r]_{\alpha^*1_h^*}&g'^*h^*
 \ar@2[dd]^{\chi} \\
 &\\
(h\circ g)^*\ar@2[r]_{(1_h\circ \alpha)^*}&(h\circ g')^*&
 }
$$
 }
\end{notation}

$(\mathbf{C3})$ for every diagram of $2$-cells   $
\xymatrix@C=2.5pc{a\ar@/^1.2pc/[r]^(.4){f}_{}="a"\ar[r]|(.38){\,f'}^{}="b"_{}="e"
\ar@/_1.2pc/[r]_(.4){f''}^{}="c" & b
\ar@/^1.3pc/[r]^(.4){g}_{}="d"\ar[r]|(.38){\,g'}^{}="f"_{}="g"\ar@/_1.2pc/[r]_(.4){g''}^{}="h"
                     & c
                    \ar@2"a";"b"^\alpha
                    \ar@2"e";"c"^{\alpha'}
                    \ar@2"d";"f"^{\beta}
                    \ar@2"g";"h"^{\beta'}}
$ in $\mathcal{B}$,
$$
\xymatrix@C=15pt@R=30pt{
(g\circ f)^*
\ar@{}@<-20pt>[rr]|(.36){\Rrightarrow}
\ar@{}@<-13pt>[rr]|(.36){\chi}
\ar@2[d]_{(\beta\circ\alpha)^*}&&&f^*g^*
\ar@{}@<45pt>[dd]|{\textstyle =}
\ar@2[lll]_{\chi} \ar@2@/^1.5pc/[dd]|(.3){\ \
(\alpha'\cdot\alpha)^*(\beta'\cdot\beta)^*}
\ar@2[lld]_{\alpha^*\beta^*}
\ar@2@/_1.5pc/[dd]|(.6){(\!\alpha'^*\!\circ
\alpha^*\!)(\!\beta'^*\!\circ\beta^*\!)}
\\
 (g'\circ f')^*\ar@2[d]_{(\beta'\circ\alpha')^*}
\ar@{}@<-20pt>[rr]|(.36){\Rrightarrow}
\ar@{}@<-13pt>[rr]|(.36){\chi}
 &f'^*g'^*\ar@2[rrd]_{\alpha'^*\beta'^*}\ar@2[l]_-{\chi}
 \ar@{}@<4pt>[r]|(1.0){\overset{(\ref{4})}\cong}
 \ar@{}[rr]|(1.0){\cong}
 \ar@{}@<7pt>[rr]|(1.0){\xi\xi}&&\\
 (g''\circ f'')^*&&&f''^*g''^*\ar@2[lll]^{\chi}}
 \xymatrix@C=15pt@R=30pt{
(g\circ f)^* \ar@2@/^2.3pc/[dd]^(.3){((\beta'\cdot\beta)\circ (\alpha'\cdot\alpha))^*}
\ar@2[d]_{(\beta\circ\alpha)^*}&&&f^*g^*\ar@2[lll]_{\chi}
\ar@2@/^1.5pc/[dd]|(.3){\ \ (\alpha'\cdot\alpha)^*(\beta'\cdot\beta)^*}
 \\
 (g'\circ f')^*\ar@2[d]_{(\beta'\circ\alpha')^*}
 \ar@{}@<10pt>[r]|(.4){\cong}
 \ar@{}@<10pt>[r]^(.4){\xi}
 \ar@{}@<-5pt>[rr]|(.9){\Rrightarrow}
 \ar@{}@<-4pt>[rr]^(.9){\chi}
 &&\\
 (g''\circ f'')^*&&&f''^*g''^*\ar@2[lll]^{\chi}}
 $$

$(\mathbf{C4})$ for every pair of composable 1-cells
$\xymatrix@C=13pt{a\ar[r]^{f}&b\ar[r]^g&c}$,

$$
\xymatrix{(g\!\circ\!f)^*
\ar@{=>}@/_1.2pc/[d]
\ar@{}@<-27pt>[d]|{1_{(g\circ f)^*}}
\ar@{=>}@/^1.2pc/[d]^{1_{g\circ f}^*}
\ar@{}[d]|{\Rrightarrow}\ar@{}[d]|(.32){\xi}& f^*g^*\ar@2[l]_{\chi}
\ar@{}@<-10pt>[d]|{\Rrightarrow}\ar@{}@<-10pt>[d]|(.32){\chi}
\ar@{=>}@/^1.2pc/[d]^{1_{f}^*1_{g}^*}
\ar@{}@<50pt>[d]|{\textstyle =}
\\(g\!\circ\!f)^*
&
 f^*g^*\ar@2[l]_{\chi}
}
\hspace{0.4cm}
\xymatrix{(g\!\circ\!f)^* \ar@{=>}@/_1.2pc/[d]
\ar@{}@<-27pt>[d]|{1_{(g\circ f)^*}}
\ar@{}@<8pt>[d]|(.45){\cong}& f^*g^*
\ar@{}[d]|{\Rrightarrow}\ar@{}[d]|(.32){\xi\xi}
\ar@2[l]_{\chi} \ar@{=>}@/^1.2pc/[d]^{1_{f}^*1_{g}^*}
\ar@{=>}@/_1.2pc/[d] \ar@{}@<-26pt>[d]|(.6){1_{f^*}\!1_{g^*}}
\\(g\!\circ\!f)^*
&
 f^*g^*\ar@2[l]_{\chi}
}
$$

$(\mathbf{C5})$ for every $2$-cells
$\xymatrix@C=0.5pc{a\ar@/^0.7pc/[rr]^f \ar@/_0.7pc/[rr]_{f'}
\ar@{}[rr]|{\Downarrow\alpha}&  &
b\ar@/^0.7pc/[rr]^{g}\ar@/_0.7pc/[rr]_{g'}
\ar@{}[rr]|{\Downarrow\beta}& & c\ar@/^0.7pc/[rr]^h
\ar@/_0.7pc/[rr]_{h'} \ar@{}[rr]|{\Downarrow\zeta}&  & d}$, the
equation $A=A'$ holds, where
$$
\xymatrix@C=13pt@R=35pt{\ar@{}[dd]|{\textstyle A=}& &((h\circ g)\circ f)^*
\ar@2[d]|{((\zeta\circ\beta)\circ\alpha)^*}
\ar@{}@<-22pt>[r]|{\Rrightarrow}\ar@{}@<-16pt>[r]|{\chi}
\ar@2[ld]_{\aso^*}&f^*(h\circ g)^* \ar@{}@<-20pt>[r]|(.75){\Rrightarrow}
\ar@{}@<-13pt>[r]|(.75){\alpha^*\!\chi}
\ar@2[l]_(.45){\chi} \ar@2[d]|{\alpha^*(\zeta\circ \beta)^*}&
&f^*g^*h^*\ar@2[ll]_{f^*\chi}\ar@2[d]^{(\alpha^*\beta^*)\zeta^*}
\ar@{}@<-6pt>[d]|(.5){\cong}\ar@2@/_1.2pc/[d]\ar@{}@<-6pt>[d]_(.64){\alpha^*(\beta^*\zeta^*)\ \ }\\ &
(h\circ(g\circ f))^*\ar@{}[r]|{\cong}\ar@{}@<7pt>[r]|{\xi}
\ar@2[rd]\ar@{}@<-4pt>[rd]_{(\zeta\circ (\beta\circ \alpha))^*} &
((h'\circ g')\circ f')^*
\ar@{}@<-22pt>[rr]|(.4){\Rrightarrow}\ar@{}@<-16pt>[rr]|(.4){\omega}
\ar@2[d]^{\aso^*}&f'^*(h'\circ g')^*\ar@2[l]^(.4){\chi}& &
f'^*g'^*h'^*\ar@2[lld]^{\chi h'^* }\ar@2[ll]^{f'^*\chi}\\ &
& (h'\circ (g'\circ f'))^*& (g'\circ f')^*h'^*\ar@2[l]_-{\chi}& &
}
$$
$$
\xymatrix@C=13pt@R=35pt{\ar@{}[dd]|{\textstyle A'=} & &((h\circ g)\circ f)^* \ar@2[ld]_{\aso^*}
\ar@{}@<-22pt>[rr]|(.3){\Rrightarrow}\ar@{}@<-16pt>[rr]|(.3){\omega}&f^*(h\circ g)^*\ar@2[l]_-{\chi}
& &
f^*g^*h^*\ar@2[ll]_{f^*\chi}\ar@2[d]^{(\alpha^*\beta^*)\zeta^*}
\ar@2[lld]_{\chi h^*}\\ &
(h\circ(g\circ f))^*
\ar@{}@<-22pt>[rr]|(.6){\Rrightarrow}\ar@{}@<-16pt>[rr]|(.6){\chi}
\ar@2[rd]\ar@{}@<-4pt>[rd]_{(\zeta\circ (\beta\circ \alpha))^*}
&&(g\circ f)^*h^* \ar@2[d]^(.35){(\beta\circ \alpha)^*\zeta^*}
\ar@2[ll]_{\chi}
\ar@{}@<-4pt>[rr]|(.5){\Rrightarrow}\ar@{}@<3pt>[rr]|(.5){\chi\zeta^*}&&
f'^*g'^*h'^*\ar@2[lld]^{\chi h'^*}\\ &
& (h'\circ (g'\circ f'))^*& (g'\circ f')^*h'^*\ar@2[l]_-{\chi}&&
}
$$

$(\mathbf{C6})$ for every four composable 1-cells
$\xymatrix@C=13pt{a\ar[r]^{f}&b\ar[r]^g&c\ar[r]^h&d\ar[r]^k&e}$, the
equation $B=B'$ holds, where

$$
\xymatrix@C=5pt{B=&&f^*g^*h^*k^*\ar@2[rrd]\ar@{}@<3pt>[rrd]^{\chi h^* k^*}
                        \ar@2[lld] \ar@{}@<-3pt>[lld]_{f^*g^*\chi}
                                                                       &&\\
                    f^*g^*(k\!\circ\!h)^*\ar@2[d]_{f^*\chi}
                     \ar@{}@<18pt>[rrrr]|{\overset{(\ref{4})}\cong}
                    \ar@2[rr]^{\chi(k\circ h)^*}
                                                     &&
                                                     (g\!\circ\!f)^*(k\!\circ\!h)^*\ar@2[dd]^{\chi}
                        &&(g\!\circ\!f)^*h^*k^*\ar@2[d]^{\chi k^*}
                         \ar@2[ll]_{(g\circ f)^*\chi}\\
                    f^*((k\!\circ\!h)\!\circ\!g)^* \ar@{}@<-2pt>[rr]|{\Rrightarrow}\ar@{}@<4pt>[rr]|{\omega}
                    \ar@2[d]_{\chi}
                    &
                    &\ar@{}@<-2pt>[rr]|{\Rrightarrow}\ar@{}@<4pt>[rr]|{\omega}&
                        &(h\!\circ\!(g\!\circ\!f))^*k^*\ar@2[d]^{\chi}
                                     \\
                    (((k\!\circ\!h)\!\circ\!g)\!\circ\!f)^*\ar@2[rr]^{\aso^*}
                    \ar@2[rd]\ar@{}@<-3pt>[rd]_{(\aso\circ 1_f)^*}
                      &
                        &((k\!\circ\!h)\!\circ\!(g\!\circ\!f))^*\ar@2[rr]^{\aso^*}\ar@{}[d]|(.55){\cong}|(.4){\xi}&
                        &(k\!\circ\!(h\!\circ\!(g\!\circ\!f)))^* \\
 &((k\!\circ\!(h\!\circ\!g))\!\circ\!f)^*\ar@2[rr]_{\aso^*}&&(k\!\circ\!((h\!\circ\!g)\!\circ\!f))^*
 \ar@2[ru]_{\ (1_k\circ \aso)^*}& }
$$
$$
\xymatrix@C=10pt{B'=&&f^*g^*h^*k^*\ar@2[rrd]\ar@{}@<3pt>[rrd]^{\chi h^* k^*}
                        \ar@2[lld] \ar@{}@<-3pt>[lld]_{f^*g^*\chi}
                                                \ar@2[d]^{f^*\!\chi \,k^*}
                        &&\\
                    f^*g^*(k\!\circ\!h)^*\ar@2[d]_{f^*\chi}
                                \ar@{}[rr]|{\Rrightarrow} \ar@{}@<7pt>[rr]|{f^*\!\omega}
                      &&f^*(h\!\circ\!g)^*k^*\ar@2[rd]^{\chi k^*}
                                     \ar@2[ld]_{f^*\chi }
                                                 \ar@{}[rr]|{\Rrightarrow}\ar@{}@<7pt>[rr]|{\omega k^*}
                        &&(g\!\circ\!f)^*h^*k^*\ar@2[d]^{\chi k^*}\\
                    f^*((k\!\circ\!h)\!\circ\!g)^* \ar@2[r]^{f^*\!\aso^*}
                                        \ar@2[d]_{\chi}
                                \ar@{}@<44pt>[dd]|(.45){\Rrightarrow}
                                \ar@{}@<44pt>[dd]|(.36){\chi}
                     &f^*(k\!\circ\!(h\!\circ\!g))^*
                                           \ar@2[dd]^{\chi}
                                \ar@{}@<-20pt>[rr]_{\Rrightarrow}
                                \ar@{}@<-15pt>[rr]_{\omega}
                        &&((h\!\circ\!g)\!\circ\!f)^*k^*
                         \ar@2[r]^{\aso^*k^*}
                        \ar@2[dd]_{\chi}
                            \ar@{}@<30pt>[dd]|(.5){\Rrightarrow}
                        \ar@{}@<30pt>[dd]|(.42){\chi}
                        &(h\!\circ\!(g\!\circ\!f))^*k^*\ar@2[d]^{\chi}
                                     \\
                    (((k\!\circ\!h)\!\circ\!g)\!\circ\!f)^*
                    \ar@2[rd]\ar@{}@<-3pt>[rd]_{(\aso\circ 1_f)^*}
                      &
                        &&
                        &(k\!\circ\!(h\!\circ\!(g\!\circ\!f)))^* \\
                        &((k\!\circ\!(h\!\circ\!g))\!\circ\!f)^*\ar@2[rr]_{\aso^*}&&
                        (k\!\circ\!((h\!\circ\!g)\!\circ\!f))^*\ar@2[ru]_{\ (1_k\circ \aso)^*}& }
$$

$(\mathbf{C7})$ for every  2-cell
$\xymatrix@C=13pt{f\ar@2[r]^(.26){\alpha}&g:a\to b}$, the following
two equations on modifications hold:
$$
\xymatrix{&1_a^*f^*\ar@2[ld]_{\chi}
\ar@{}@<-3pt>[d]|(.55){\Rrightarrow}|(.38){\delta}
& f^*\ar@2[l]_(.4){\chi\!f^*}\ar@2[ld]\ar@{}@<3pt>[ld]_(.4){1_{f^*}}
\ar@2[d]^(.4){\alpha^*}\\
(f\!\circ\!1_a)^*
\ar@{}@<30pt>[d]|(.56){\cong}|(.38){\xi}
\ar@2[r]^{\br^*}\ar@2[d]_{(\alpha\circ
1)^*}&f^*\ar@{}[r]|{\cong}\ar@2[d]^(.4){\alpha^*}&
g^*\ar@2[ld]^{1_{g^*}}\\(g\!\circ 1_a)^*\ar@2[r]^-{\br^*}&g^*& }
\xymatrix{\\ =\\}
\xymatrix{&1_a^*f^*\ar@2[ld]_{\chi}\ar@2[d]\ar@{}@<-1pt>[d]^{1_a^*\alpha^*}
\ar@{}@<-3pt>[rd]^(.5){\overset{(\ref{4})}\cong} & f^*\ar@2[l]_(.4){\chi\! f^*}
\ar@2[d]^(.4){\alpha^*}\\
(f\!\circ\!1_a)^*\ar@{}@<-2pt>[r]|(.55){\Rrightarrow}\ar@{}@<4pt>[r]|(.55){\chi}\ar@2[d]_{(\alpha\circ
1)^*}&1_a^*g^* \ar@2[ld]_{\chi}
\ar@{}@<-3pt>[d]|(.55){\Rrightarrow}|(.38){\delta} &
g^*\ar@2[l]_(.4){\chi\! g^*} \ar@2[ld]^{1_{g^*}}\\(g\!\circ
1_a)^*\ar@2[r]^-{\br^*}&g^*& }
$$
$$
\xymatrix{&f^*1_b^*\ar@2[ld]_{\chi}
\ar@{}@<-3pt>[d]|(.55){\Rrightarrow}|(.38){\gamma}
& f^*\ar@2[l]_(.4){f^*\!\chi}\ar@2[ld]\ar@{}@<3pt>[ld]_(.4){1_{f^*}}
\ar@2[d]^(.4){\alpha^*}\\
(1_b\!\circ\!f)^*
\ar@{}@<30pt>[d]|(.56){\cong}|(.38){\xi}
\ar@2[r]^{\bl^*}\ar@2[d]_{(1\circ\alpha)^*}&f^*\ar@{}[r]|{\cong}\ar@2[d]^(.4){\alpha^*}&
g^*\ar@2[ld]^{1_{g^*}}\\(1_b\!\circ g)^*\ar@2[r]^{\bl^*}&g^*& }
\xymatrix{\\ =\\}
\xymatrix{&f^*1_b^*\ar@2[ld]_{\chi}\ar@2[d]\ar@{}@<-1pt>[d]^{\alpha^*1_b^*}
\ar@{}@<-3pt>[rd]^(.5){\overset{(\ref{4})}\cong} & f^*\ar@2[l]_(.4){f^*\!\chi}
\ar@2[d]^(.4){\alpha^*}\\
(1_b\!\circ\!f)^*\ar@{}@<-2pt>[r]|(.55){\Rrightarrow}\ar@{}@<4pt>[r]|(.55){\chi}\ar@2[d]_{(1\circ \alpha)^*}
&g^*1_b^* \ar@2[ld]_{\chi}
\ar@{}@<-3pt>[d]|(.55){\Rrightarrow}|(.38){\gamma} &
g^*\ar@2[l]_(.4){g^*\!\chi} \ar@2[ld]^{1_{g^*}}\\(1_b\!\circ
g)^*\ar@2[r]^{\bl^*}&g^*& }
$$

$(\mathbf{C8})$ for every pair of composable 1-cells
$\xymatrix@C=13pt{a\ar[r]^{f}&b\ar[r]^g&c}$, the following equation
holds:
$$
\xymatrix@R=18pt@C=10pt{ f^*1_b^*g^*
\ar@{}@<-17pt>[rr]|(.55){\Rrightarrow}
\ar@{}@<-10pt>[rr]|(.55){\gamma g^*}
\ar@2[d]_{f^*\chi}\ar@2[rd]^{\chi g^*}&&
f^*g^*
\ar@2[d]^{1_{f^*g^*}}\ar@2[ll]_{f^*\chi g^*}
\ar@{}@<28pt>[ddd]|{\textstyle =}                  \\
f^*(g\!\circ\!1_b)^*
\ar@{}@<-15pt>[r]|{\Rrightarrow}
\ar@{}@<-9pt>[r]|{\omega}
\ar@2[dd]_{\chi}
& (1_b\!\circ\!f)^*g^*
 \ar@2[d]^{\chi}
\ar@2[r]^(.6){\bl^*\!g^*}&
f^*g^*\ar@2[dd]^{\chi}                 \\
&(g\!\circ\!(1_b\!\circ\! f))^*
\ar@{}@<4pt>[r]|(.7){\Rrightarrow}
\ar@{}@<10pt>[r]|(.7){\chi}
\ar@2[rd]^{(1_g\circ\, \bl)^*}&                \\
((g\!\circ\!1_b)\!\circ\! f)^*
\ar@{}@<8pt>[rr]|{\cong}
\ar@{}@<15pt>[rr]|{\xi}
\ar@2[ru]^{\aso^*}
\ar@2[rr]_{(\br\circ 1_f)^*}& &(g\!\circ\!f)^* }
\xymatrix@R=14pt@C=28pt{
f^*1_b^*g^*\ar@2[dd]_{f^*\chi}&
& f^*g^*\ar@2[ll]_{f^*\chi g^*}
\ar@2@<3pt>[dd]^{1_{f^*g^*}}
\ar@/_1pc/@2[dd]\ar@{}@<-22pt>[dd]|{f^*\!1_{g^*}}
\ar@{}@<-5pt>[dd]|{\cong}                               \\
& &\\
f^*(g\!\circ\!1_b)^*
\ar@{}@<-28pt>[rr]|(.5){\Rrightarrow}
\ar@{}@<-22pt>[rr]|(.5){\chi}
 \ar@{}@<26pt>[r]|(.7){\Rrightarrow}
\ar@{}@<33pt>[r]|(.7){f^*\!\delta}
\ar@2[rr]^(.6){f^*\!\br^*}
\ar@2[dd]_{\chi}&&f^*g^*\ar@2[dd]^{\chi} \\
& ~ & \\
((g\!\circ\!1_b)\!\circ\! f)^*
\ar@2[rr]_{(\br\circ 1_f)^*}&
&(g\!\circ\!f)^* }
$$

A lax bidiagram of bicategories $\F:\B^{\mathrm{op}}\to\Bicat$ is
called a {\em pseudo bidiagram of bicategories} whenever each of the
pseudo transformations $\chi$, in $(\mathbf{D4})$ and
$(\mathbf{D5})$, is a pseudo equivalence; that is, regarding $\B$ as
a tricategory whose 3-cells are all identities, a trihomomorphism
$\F:\B^{^{\mathrm{op}}}\to \Bicat$ in the sense of
Gordon-Power-Street \cite[Definition 3.1]{g-p-s}.

\begin{example}\label{1213}{\em
If $\C$ is any small category viewed as a bicategory, then a lax
bidiagram of bicategories over $\C$, as above, in which the
deformations $\xi$ in $(\mathbf{D6})$ and $(\mathbf{D7})$, and
$\chi$ in $(\mathbf{D8})$, are all identities is the same thing as a
{\em lax diagram of bicategories} $\F:\C^{\mathrm{op}}\to \Bicat$ as
in \cite[\S 2.2]{ccg-2}.

For  instance, let $X$ be any topological space and let $\C(X)$
denote its poset of open subsets, regarded as a category. Then a
{\em fibered bicategory in bigroupoids above $X$} is a lax diagram
of bicategories
$$
\F:\C(X)^{\mathrm{op}}\to \Bicat,
$$
such that all the bicategories $\F_U$ are bigroupoids, that is,
bicategories whose 1-cells are invertible up to a 2-cell, and whose
2-cells are strictly invertible. In particular, when all the
bigroupoids $\F_U$ are strict, that is, 2-categories, and  all the
homomorphisms $f^*:\F_U\to \F_V$ associated to the inclusions of
open sets $f:V\hookrightarrow U$ are 2-functors, we have the notion
of {\em fibered $2$-category in $2$-groupoids above the space} $X$.
Thus, 2-stacks and 2-gerbes on spaces are relevant examples of lax
diagrams of bicategories (see e.g. Breen \cite[Definitions 6.1, 6.2,
and 6.3]{Breen}).

For another example, if $\mathcal{T}$ is any small tricategory, as
in \cite[Definition 2.2]{g-p-s}, then its {\em Grothendieck nerve}
$$
\mathrm{N}\mathcal{T}:\Delta^{\!{\mathrm{op}}}\to \Bicat,
$$
whose bicategory of   $p$-simplices is
$$
\mathrm{N}_p\mathcal{T} = \bigsqcup_{(x_0,\ldots,x_p)\in \mbox{\scriptsize Ob}\mathcal{T}^{p+1}}\hspace{-0.3cm}
\mathcal{T}(x_1,x_0)\times\mathcal{T}(x_2,x_1)\times\cdots\times\mathcal{T}(x_p,x_{p-1}),
$$
\cite[Theorem 3.3.1]{c-h} gives a striking example of a pseudo
diagram of bicategories.}
\end{example}

\begin{example}\label{exap2} {\em For any bicategory $\B$, a {\em lax bidiagram of
categories} over $\B$, that is, a lax bidiagram
$\F:\B^{\mathrm{op}}\to \Bicat$ in which every bicategory $\F_a$,
$a\in \Ob\B$, is a category (i.e., a bicategory where all the
2-cells are identities) is the same thing as a contravariant lax
functor $\F:\B^{\mathrm{op}}\to \Cat$ to the 2-category $\Cat$ of
small categories, functors, and natural transformations, since the
condition of all $\F_a$  being categories forces all the
modifications in $(\mathbf{D6})- (\mathbf{D10})$ to be identities.

For example, any object $b$ of a bicategory $\B$ defines a pseudo
bidiagram of categories \cite[Example 10]{street}
$$
\B(-,b):\B^{\mathrm{op}}\to \mathbf{Cat},
$$
which carries an object $x\in\Ob\B$ to the hom-category $\B(x,b)$, a
1-cell $g:x\to y$ to the functor $g^*:\B(y,b)\to\B(x,b)$ defined by
$$ \xymatrix@C=25pt {y \ar@/^0.8pc/[r]^{f} \ar@/_0.8pc/[r]_-{f'}
\ar@{}[r]|{\Downarrow\beta} & b} \xymatrix{\ar@{|->}[r]^{g^*}&} \xymatrix @C=40pt{x
\ar@/^0.8pc/[r]^{f\circ g} \ar@/_0.8pc/[r]_-{f'\!\circ g} \ar@{}[r]|{\Downarrow \beta\circ 1_{\!g}} &
b},
$$
and a 2-cell $\alpha:g\Rightarrow g'$ is carried to the natural
transformation $\alpha^*:g^*\Rightarrow g'^*$ that assigns to each
1-cell $f:y\to b$ in $\B$ the 2-cell $1_{\!f}\circ \alpha:f\circ
g\Rightarrow f\circ g'$. For $x\overset{g}\to y\overset{h}\to z$ any
two composable 1-cells of $\B$, the structure natural equivalence
$\chi: g^*h^*\cong (h\circ g)^*$, at any
 $f:z\to b$, is provided by the associativity constraint
$\boldsymbol{a}:(f\circ h)\circ g\cong f\circ (h\circ g)$, whereas
for any $x\in\Ob \B$, the structure natural equivalence
$\chi:1_{\B(x,b)}\cong 1_{x}^*$, at any $f:x\to b$, is the right
unit isomorphism $\br^{-1}:f\cong f\circ 1_{x}$.}
\end{example}

\section{The Grothendieck construction on lax bidiagrams of bicategories}\label{gt}
The well-known `Grothendieck construction', due to Grothendieck
\cite{groth, grothendieck} and Giraud \cite{giraud-2,giraud}, on
pseudo diagrams $(\F,\chi):\B^{\mathrm{op}}\to \Cat$ of small
categories with the shape of any given small category,  was
implicitly used in the proof given by Quillen of his famous Theorems
A and B for the classifying spaces of small categories
\cite{quillen}. Subsequently, since Thomason established his
celebrate Homotopy Colimit Theorem \cite{thomason}, the Grothendieck
construction has become an essential tool in homotopy theory of
classifying spaces.

In this section, our work is dedicated to extending the Grothendieck
construction to lax bidiagrams of bicategories $\F=(\F,
\chi,\xi,\omega,\gamma,\delta):\B^{\mathrm{op}}\to\Bicat$, where
$\B$ is any bicategory, since its use is a key for proving our main
results in the paper. But we are not claiming here much originality,
since extensions of the ubiquitous Grothendieck construction have
been developed in many general frameworks. In particular, we should
mention here three recent approaches to our construction: In
\cite{ccg-2}, Carrasco, Cegarra, and Garz\'on  study the
bicategorical Grothendieck construction on lax diagrams of
bicategories, as in Example \ref{1213}.  In
\cite{bakovic-2,bakovic}, Bakovi\'c performs the Grothendieck
construction on normal pseudo bidiagrams of bicategories, that is,
lax bidiagrams $\F$ whose modifications $\chi_b$ in $(\mathbf{D5})$
and  $\xi_f$  in $(\mathbf{D7})$ are identities, and whose pseudo
transformations $\chi_{g,f}$ in $(\mathbf{D4})$ are pseudo
equivalences. Buckley, in \cite{buckley}, presents the more general
case of pseudo bidiagrams, that is, when all the pseudo
transformations $\chi_{g,f}$ and $\chi_b$ in $(\mathbf{D4})$ and
$(\mathbf{D5})$ are pseudo equivalences.

The {\em Grothendieck construction on a lax bidiagram of
bicategories} $\F:\B^{\mathrm{op}} \to\Bicat$, as in $(\ref{lb})$,
assembles it into a  bicategory, denoted by
$$\xymatrix{\int_{\B}\!\F},$$
which is defined as follows:

\underline{The objects}\,  are pairs $(x,a)$, where $a\in\Ob\B$ and
$x\in \Ob\F_a$.

\underline{The 1-cells}\,  are pairs $(u,f):(x,a)\to (y,b)$,  where
$f:a\to b$ is a 1-cell in $\B$ and $u:x\to f^*y$ is a 1-cell in
$\F_a$.

\underline{The 2-cells}\, are pairs
$\xymatrix@C=25pt{(x,a)\ar@{}[r]|{\Downarrow
(\phi,\alpha)}\ar@/^0.8pc/[r]^{(u,f)}\ar@/_0.8pc/[r]_{(v,g)}&(y,b),}
$ consisting of a 2-cell $\xymatrix@C=0.5pc{a \ar@/^0.6pc/[rr]^{ f}
\ar@/_0.6pc/[rr]_{ g}\ar@{}[rr]|{\Downarrow\alpha} &
 &b }$ of $\B$ together with a 2-cell  $\phi:\alpha^*y\circ u\Rightarrow v$ in $\F_a$,
$$
\xymatrix@C=15pt@R=10pt{&f^*y\ar@{}@<3pt>[d]|(.6){\Downarrow\phi}\ar[rd]^{\alpha^*\!y}&\\
 x\ar[ru]^{u}\ar[rr]_{v}&&g^*y.}
$$

\underline{The vertical composition}  of 2-cells in $\int_\B\F$, $\xymatrix@C=25pt{(x,a) \ar@/^0.8pc/[r]^{(u,f)} \ar@/_0.8pc/[r]_{(v,g)}
\ar@{}[r]|{\Downarrow(\phi,\alpha)} & (y,b)}$ and $\xymatrix@C=25pt{(x,a) \ar@/^0.8pc/[r]^{(v,g)} \ar@/_0.8pc/[r]_{(w,h)}
\ar@{}[r]|{\Downarrow(\psi,\beta)} & (y,b)}$, is the 2-cell
$$
\xymatrix@C=50pt{(x,a)\ar@/^1pc/[r]^{(u,f)}\ar@{}[r]|(.5){\Downarrow (\psi\odot \phi,\beta\cdot \alpha)}
\ar@/_1pc/[r]_{(w,h)}&(y,b),}
$$
where $\beta\cdot\alpha$ is the vertical composition of $\beta$ with
$\alpha$ in $\B$,
 and $\psi\odot \phi:(\beta\cdot\alpha)^*\!y\circ u\Rightarrow w$ is the 2-cell of $\F_a$ obtained
  by pasting the diagram below.
$$
\xymatrix@C=40pt{\ar@{}@<-45pt>[dd]|(.3){\textstyle \psi\odot \phi:} & f^*y \ar[rdd]^{(\beta\cdot\alpha)^*y} \ar[d]|{\alpha^*y} &
	  \\
	  & g^*y \ar[rd]_{\beta^*y}  \ar@{}@<3pt>[r]|(.3){\overset{\xi}\cong}&
	  \\
	  x \ar[ruu]^u \ar[ru]_{v} \ar[rr]_w \ar@{}@<40pt>[r]|(.7){\Downarrow \phi}  \ar@{}@<20pt>[rr]|{\Downarrow \psi}&& h^*y
}
$$

The vertical composition of 2-cells so defined is associative and
unitary thanks to the coherence conditions $(\mathbf{C1})$ and
$(\mathbf{C2})$. The identity 2-cell, for each 1-cell
$(u,f):(x,a)\to (y,b)$, is $$\begin{array}{c}
1_{(u,f)}=(\overset{_\cdot}1_{(u,f)},1_f):(u,f)\Rightarrow (u,f).\\
\overset{\cdot}1_{(u,f)}=\big(\xymatrix{1_f^*y\circ
u\ar@2[r]^-{\xi^{-1}\circ 1}&1_{f^*\!y}\circ
u\overset{\bl}\Rightarrow u}\big)
\end{array}
$$

Hence, we have defined the hom-category
$\int_\B\F\big((x,a),(y,b)\big)$, for any two objects $(x,a)$ and
$(y,b)$ of $\int_\B\F$. Before continuing the description of this
bicategory, we shall do the following useful observation:
\begin{lemma}\label{isogro}
A $2$-cell $(\phi,\alpha):(u,f)\Rightarrow (v,g)$ in
$\int_\B\F\big((x,a),(y,b)\big)$ is an isomorphism if and only if
both $\alpha:f\Rightarrow g$, in $\B(a,b)$, and $\phi:\alpha^*y\circ
u\Rightarrow v$, in $\F_a(x,g^*y)$, are isomorphisms.
\end{lemma}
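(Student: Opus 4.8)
The plan is to isolate one structural input and then invoke two elementary categorical facts: a morphism of a category that admits both a left inverse and a right inverse is an isomorphism (the two agreeing), and a functor which is full, faithful and isomorphism‑reflecting —in particular any equivalence of categories— detects monomorphisms and isomorphisms. The structural input is this: if $\alpha:f\Rightarrow g$ is an isomorphism in $\B(a,b)$, with inverse $\beta:=\alpha^{-1}$, then the whiskering functors
\[
\alpha^*y\circ(-)\colon \F_a(x,f^*y)\to\F_a(x,g^*y),\qquad \beta^*y\circ(-)\colon \F_a(x,g^*y)\to\F_a(x,f^*y)
\]
are mutually quasi‑inverse equivalences of categories. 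Indeed, by the \textbf{Notation} following $(\mathbf{C1})$ and $(\mathbf{C2})$ there is a homomorphism $\B(a,b)\to\Bicat(\F_b,\F_a)$ carrying $f\mapsto f^*$ and $\alpha\mapsto\alpha^*$; a homomorphism of bicategories preserves equivalence $1$‑cells, and an isomorphism of the category $\B(a,b)$ is an equivalence $1$‑cell for its locally discrete bicategory structure, so $\alpha^*:f^*\Rightarrow g^*$ becomes a pseudo‑natural equivalence with quasi‑inverse $\beta^*$ (coherence data assembled from the invertible modifications $\xi_{\beta,\alpha}$, $\xi_{\alpha,\beta}$, $\xi_f$, $\xi_g$). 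Hence each component $\alpha^*y:f^*y\to g^*y$ is an equivalence in $\F_a$, and post‑composition with an equivalence $1$‑cell is an equivalence of hom‑categories.

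For the ``only if'' part, let $(\psi,\beta)\colon(v,g)\Rightarrow(u,f)$ be a two‑sided inverse of $(\phi,\alpha)$. Comparing second coordinates in $(\psi,\beta)(\phi,\alpha)=1_{(u,f)}$ and $(\phi,\alpha)(\psi,\beta)=1_{(v,g)}$, and using that vertical composition and identities in $\int_\B\F$ have second coordinates $\beta\cdot\alpha$, $\alpha\cdot\beta$, $1_f$, $1_g$, gives $\beta\cdot\alpha=1_f$ and $\alpha\cdot\beta=1_g$; thus $\alpha$ is invertible and the previous paragraph applies. Unwinding the definitions of $\odot$ and of $1_{(u,f)}=(\overset{\cdot}{1}_{(u,f)},1_f)$, the first coordinate of $(\psi,\beta)(\phi,\alpha)=1_{(u,f)}$ reads
\[
\psi\cdot\big(1_{\beta^*y}\circ\phi\big)\cdot\aso\cdot\big(\xi^{-1}\circ 1_u\big)=\overset{\cdot}{1}_{(u,f)},
\]
in which $\aso$, $\xi^{-1}\circ 1_u$ and $\overset{\cdot}{1}_{(u,f)}$ are isomorphisms; hence $\psi\cdot(1_{\beta^*y}\circ\phi)$ is an isomorphism, so $\psi$ is a split epimorphism and $1_{\beta^*y}\circ\phi$ a split monomorphism. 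Symmetrically, the first coordinate of $(\phi,\alpha)(\psi,\beta)=1_{(v,g)}$ shows $\phi\cdot(1_{\alpha^*y}\circ\psi)$ is an isomorphism, so $1_{\alpha^*y}\circ\psi$ is a monomorphism; as $\alpha^*y\circ(-)$ is faithful it reflects monomorphisms, so $\psi$ is a monomorphism. A split epimorphism which is a monomorphism is an isomorphism, so $\psi$ is an isomorphism; feeding this back into the displayed equation makes $1_{\beta^*y}\circ\phi$ an isomorphism, and since $\beta^*y\circ(-)$ reflects isomorphisms, $\phi$ is an isomorphism.

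For the ``if'' part, assume $\alpha$ and $\phi$ are isomorphisms and set $\beta=\alpha^{-1}$. I would produce a left inverse and a right inverse of $(\phi,\alpha)$ separately. A left inverse $(\psi',\beta)$ is forced by solving $\psi'\odot\phi=\overset{\cdot}{1}_{(u,f)}$ for $\psi'$: since $\phi$ is invertible, every factor other than $\psi'$ in the composite defining $\psi'\odot\phi$ is an isomorphism, so one may put $\psi':=\overset{\cdot}{1}_{(u,f)}\cdot\big((1_{\beta^*y}\circ\phi)\cdot\aso\cdot(\xi^{-1}\circ 1_u)\big)^{-1}$, and then $(\psi',\beta)(\phi,\alpha)=1_{(u,f)}$ by construction. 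For a right inverse $(\psi,\beta)$, the equation $\phi\odot\psi=\overset{\cdot}{1}_{(v,g)}$ involves $\psi$ only through $1_{\alpha^*y}\circ\psi$; since $\alpha^*y\circ(-)$ is full and faithful, there is a unique $\psi\in\F_a(x,f^*y)(\beta^*y\circ v,\,u)$ with $1_{\alpha^*y}\circ\psi$ equal to the isomorphism $\phi^{-1}\cdot\overset{\cdot}{1}_{(v,g)}\cdot(\xi\circ 1_v)\cdot\aso^{-1}$ dictated by that equation, whence $(\phi,\alpha)(\psi,\beta)=1_{(v,g)}$. Having both a left and a right inverse, $(\phi,\alpha)$ is an isomorphism. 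The one genuinely delicate point I would be careful about is the structural input of the first paragraph —that $(-)^*$ transports the inverse pair $(\alpha,\alpha^{-1})$ to a pseudo‑natural‑equivalence pair $(\alpha^*,\beta^*)$— where conditions $(\mathbf{C1})$ and $(\mathbf{C2})$ are exactly what force the relevant composites of the modifications $\xi$ to reduce, up to the unit modifications $\xi_f$, $\xi_g$, to identities; everything else is routine manipulation of the explicit formulas for $\odot$ and $1_{(u,f)}$ together with the two elementary categorical lemmas quoted above.
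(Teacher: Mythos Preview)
Your argument is correct. The paper itself offers no proof at all—its entire content for this lemma is the sentence ``It is quite straightforward, and we leave it to the reader''—so there is no approach to compare against; what you have written is precisely the kind of verification the authors delegated.

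Your machinery is a bit heavier than strictly necessary, though the extra structure buys you something. In the ``only if'' direction one can reach $\phi$ directly without the detour through $\psi$: from $\phi\cdot(1_{\alpha^*y}\circ\psi)$ invertible, $\phi$ is a split epimorphism, and from $\psi\cdot(1_{\beta^*y}\circ\phi)$ invertible, $1_{\beta^*y}\circ\phi$ is a (split) monomorphism, whence $\phi$ is a monomorphism since the faithful functor $\beta^*y\circ(-)$ reflects monomorphisms; a split epi which is mono is an isomorphism. In the ``if'' direction your left inverse $\psi'$ is already the obvious candidate for a two-sided inverse, and checking the remaining equation $\phi\odot\psi'=\overset{\cdot}{1}_{(v,g)}$ directly is a routine coherence calculation using $(\mathbf{C1})$--$(\mathbf{C2})$; your device of producing a separate right inverse via fullness of $\alpha^*y\circ(-)$ is a neat way to \emph{avoid} that calculation, trading it for the (easy) observation that $\alpha^*y$ is an equivalence $1$-cell. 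Either route is fine.
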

\begin{proof} It is quite straightforward, and we leave it to the reader.
\end{proof}

We return now to the description of the bicategory $\int_\B\F$.

\underline{The horizontal composition}  of two 1-cells
$\xymatrix@C=20pt{(x,a)\ar[r]^{(u,f)}&(y,b)\ar[r]^{(u',f')}&(z,c)}$
is the 1-cell
$$
(u',f')\circ (u,f)=(u'\circledcirc u,f'\circ f):(x,a)\longrightarrow (z,c),
$$
where $f'\circ f:a\to c$ is the composite in $\B$ of the 1-cells $f$
and $f'$, while $$u'\circledcirc u=\chi z\circ (f^*u'\circ
u):x\longrightarrow (f'\circ f)^*z$$ is the composite in $\F_a$  of
$
\xymatrix@C=15pt{x\ar[r]^-{u}&f^*y\ar[r]^{f^*u'}&f^*{f'}^*z\ar[r]^-{\chi
z}&(f'\circ f)^*z}$.

\underline{The horizontal composition} of 2-cells is defined by
$$
\xymatrix@C=30pt{(x,a)\ar@{}[r]|{\Downarrow (\phi,\alpha)}\ar@/^1pc/[r]^{(u,f)}
\ar@/_1pc/[r]_{(v,g)}&(y,b)
\ar@{}[r]|{\Downarrow (\phi',\alpha')}\ar@/^1pc/[r]^{(u',f')}\ar@/_1pc/[r]_{(v',g')}&(z,c)}
\ \overset{\circ}\mapsto
\xymatrix@C=55pt{(x,a)\ar@{}[r]|{\Downarrow (\phi'\circledcirc\phi,\alpha'\circ \alpha)}
\ar@/^1.2pc/[r]^{(u'\circledcirc u,f'\circ f)}\ar@/_1.2pc/[r]_{(v'\circledcirc v,g'\circ g)}&(z,c),
}
$$
where $\alpha'\circ\alpha$ is the horizontal composition in $\B$ of
$\alpha'$ with $\alpha$, and $\phi'\circledcirc\phi$ is the 2-cell
in $\F_a$ canonically obtained by pasting the diagram below.
$$
\xymatrix@C=40pt{\ar@{}@<-45pt>[dd]|(.2){\textstyle \phi'\circledcirc\phi:}  & f^*y
\ar[dd]^{\alpha^*\!y}
\ar[r]^{f^*u'}\ar[rd]\ar@{}@<1pt>[rd]_{f^*\!v'}
\ar@{}@<52pt>[d]|(.35){\Downarrow f^*\!\phi'}&
f^*f'^*z\ar[r]^{\chi z}\ar[d]^{f^*\!\alpha'^*\!z}&
(f'\circ f)^*z\ar[dd]^{(\alpha'\circ \alpha)^*z}
\\ x\ar@{}[r]|(.6){\Downarrow \phi}\ar[ru]^{u}\ar[rd]_{v}&&
f^*g'^*z\ar[d]^{\alpha^*\!g'^*\!z}
\ar@{}[r]^{\chi z}
\ar@{}@<-1pt>[r]|{\cong}
&\\
&g^*y
\ar@{}@<28pt>[r]|{\widehat{\alpha^*}}
\ar@{}@<21pt>[r]|{\cong}
\ar[r]_{g^*\!v'}&g^*g'^*z\ar[r]_{\chi z}&(g'\circ g)^*z}
$$

Owing to the coherence conditions $(\mathbf{C3})$ and
$(\mathbf{C4})$, the horizontal composition so defined truly gives,
for any three objects $(x,a),(y,b),(z,c)$ of $\int_{\B}\!\F$,  a
functor
$$\xymatrix{\int_{\B}\!\F((y,b),(z,c))\times
\int_{\B}\!\F((x,a),(y,b))\ar[r]^(.65){\circ}&
\int_{\B}\!\F((x,a),(z,c)).}$$

\underline{The structure associativity} isomorphism, for any three
composable morphisms $$(x,a)\overset{(u,f)}\longrightarrow
(y,b)\overset{(v,g)}\longrightarrow
(z,c)\overset{(w,h)}\longrightarrow  (t,d),$$
$$(\overset{_\circ}\aso,\aso):\big((w\circledcirc v)\circledcirc u,(h\circ g)\circ f \big)
\cong\big(w\circledcirc (v\circledcirc u), h\circ (g\circ f)
\big),$$
  is provided by the associativity constraint
$\aso:(h\circ g)\circ f\cong h\circ (g\circ f)$ of the bicategory
$\B$, together with the isomorphism in the bicategory $\F_a$ $$
\overset{_\circ}\aso:\aso^*t\circ ((w\circledcirc v)\circledcirc
u)\cong w\circledcirc (v\circledcirc u),$$ canonically obtained from
the 2-cell pasted of the diagram
$$
\xymatrix{
&& & f^*(h\circ g)^*t\ar[r]^{\chi t}& ((h\circ g)\circ f)^* t\ar[dd]^{\aso^* t}\\
x \ar@{}@<-16pt>[rr]|(.75){\cong }
\ar[r]^{u}\ar[rrd]_{v\circledcirc u}&
f^*y\ar[r]\ar@{}@<-2pt>[r]^{f^*\!v}\ar@<4pt>[rru]^{f^*\!(w\circledcirc v)}&
f^*g^*z\ar[r]\ar@{}@<-2pt>[r]^{f^*\!g^*\!w}
\ar[d]_{\chi z}
\ar@{}@<16pt>[r]|{\cong }&
f^*g^*h^*t\ar[u]_{f^*\!\chi t}\ar[d]^{\chi h^*\!t}
\ar@{}@<-3pt>[r]|(.55){\cong}
\ar@{}@<3pt>[r]|(.55){\omega t}&\\
&&(g\circ f)^*z\ar[r]_{(g\circ f)^*\!w}
\ar@{}@<24pt>[r]|{\widehat{\chi}}
\ar@{}@<17pt>[r]|{\cong}
&(g\circ f)^*h^*t\ar[r]_{\chi t}&(h\circ(g\circ f))^*t }
$$

By Lemma \ref{isogro}, these associativity 2-cells are actually
isomorphisms in $\xymatrix{\int_{\B}\!\F}$. Furthermore, they are
natural thanks to the coherence condition $(\mathbf{C5})$, while the
pentagon axiom for them holds because of condition $(\mathbf{C6})$.

\underline{The identity 1-cell}, for each object $(x,a)$ in
$\xymatrix{\int_{\B}\!\F}$, is provided by the pseudo transformation
$\chi_a:1_{\F_a}\Rightarrow 1_a^*$, by $$ 1_{(x,a)}=(\chi
x,1_a):(x,a)\to (x,a).$$

\underline{The left and right unit constraints}, for each morphism
$(u,f):(x,a)\to (y,b)$ in $\xymatrix{\int_{\B}\!\F}$,
$$\begin{array}{cc}
(\overset{_\circ}\bl,\bl):1_{(y,b)}\circ (u,f)\cong (u,f),&
(\overset{_\circ}\br,\br):(u,f)\circ 1_{(x,a)}\cong (u,f),
\end{array}
$$
are respectively given by the 2-cells $\bl:1_b\circ f\Rightarrow f$
and $\br:f\circ 1_a\Rightarrow f$ of $\B$, together with the 2-cells
in $\F_a$ obtained by pasting the diagrams below.
$$
\xymatrix@R=62pt{f^*y \ar@{}@<-25pt>[d]|(0.35){\textstyle \overset{_\circ}\bl: }\ar[rrd]\ar@{}@<3pt>[rrd]_{1_{f^*\!y}}\ar[r]^{f^*\!\chi\!y}&
f^*1_b^*y\ar[r]^{\chi y}&(1_b\circ f)^*y\ar[d]^{\bl^*\!y}\\
x\ar@{}@<18pt>[r]|{\cong}
\ar@{}@<24pt>[r]|{\bl }
\ar@{}@<52pt>[rr]|(.70){\cong}
\ar@{}@<58pt>[rr]|(.70){\gamma y}
\ar[u]^{u}\ar[rr]_{u}&&f^*y
}\hspace{0.5cm}
\xymatrix{
1_a^*x
\ar@{}@<-25pt>[dd]|(0.35){\textstyle \overset{_\circ}\br: }
\ar[r]^{1_a^*\!u}&1_a^*f^*y\ar[r]^{\chi y}&(f\circ 1_a)^*y\ar[dd]^{\br^*y}\\
\ar@{}@<12pt>[r]|{\cong}
\ar@{}@<20pt>[r]|{\widehat{\chi}}
& f^*y
\ar[u]\ar@{}@<2pt>[u]_{\chi f^*\!y}
\ar[rd]\ar@{}@<-3pt>[rd]^{1_{f^*\!y}}
\ar@{}@<12pt>[r]|{\cong}
\ar@{}@<20pt>[r]|{\delta y}&
\\
x
\ar@{}@<16pt>[rr]|{\cong}
\ar@{}@<22pt>[rr]|{\bl}
\ar[ru]\ar@{}@<-2pt>[ru]^{u}\ar[uu]^{\chi x}\ar[rr]_{u}&&f^*y}
$$

These unit constraints in $\xymatrix{\int_{\B}\!\F}$ are
isomorphisms by Lemma \ref{isogro}, natural due to coherence
condition $(\mathbf{C7})$, and the coherence triangle for them
follows from condition $(\mathbf{C8})$. Hence,
$\xymatrix{\int_{\B}\!\F}$ is actually a bicategory.

As a consequence of the above construction we obtain the following
equalities on lax bidiagram of bicategories, which is used many
times along the paper for several proofs:

\begin{lemma} \label{gdo}Let $\F=(\F,
\chi,\xi,\omega,\gamma,\delta):\B^{\mathrm{op}}\to\Bicat$ be a lax
bidiagram of bicategories. The equations on modifications below
hold.

$(i)$ For any object $a$ of $\B$,
$$
\xymatrix@C=45pt{1_a^*\ar@2[rdd]^{1}\ar@2[d]_{1_a^*\chi }\ar@{}@<-24pt>[r]|(.7){\cong}
           &1_{\mathcal{F}_a}\ar@2[dd]^{\chi }\ar@2[l]_{\chi }
\ar@{}@<30pt>|{\textstyle =}[dd]
           \\
                    1_a^*1_a^*\ar@2[d]_{\chi} \ar@{}@<-15pt>[r]|(.35){\Rrightarrow}
                    \ar@{}@<-9pt>[r]|(.35){\gamma} & \\
                    (1_a\circ 1_a)^*\ar@2[r]_{\bl_1^*=\br_1^*}
                      &1_a^*}\hspace{0.3cm}
\xymatrix@C=15pt{1_a^*\ar@2[d]_{1_a^*\chi }
           &\ar@{}@<-11pt>[d]|(.4){\overset{(\ref{4})}\cong}&1_{\mathcal{F}_a}\ar@2[dd]^{\chi }\ar@2[ll]_{\chi }
                                        \ar@2[ld]\ar@{}@<-6pt>[ld]|{\chi }\\
                    1_a^*1_a^*\ar@2[d]_{\chi} & 1^*_a\ar@2[l]_(.4){\chi 1_a^*}
                                                     \ar@2[rd]_{1}
                                     \ar@{}[r]|(.6){\cong}
                          \ar@{}@<20pt>[l]|(.35){\Rrightarrow}
                    \ar@{}@<14pt>[l]|(.35){\delta}&\\
                    (1_a\circ 1_a)^*\ar@2[rr]_{\bl_1^*=\br_1^*}
                      &&1_a^*}
$$

$(ii)$ for every pair of composable 1-cells
$\xymatrix@C=13pt{a\ar[r]^{f}&b\ar[r]^g&c}$ in $\B$,
$$
\xymatrix@R=18pt@C=16pt{ f^*g^*1_c^*
\ar@{}@<-15pt>[rr]|(.6){\overset{(\ref{4})}\cong}
\ar@2[d]_{f^*\chi}\ar@2[rd]^{\chi 1_c^*}&&
f^*g^*
\ar@2[d]^{\chi}
\ar@2[ll]_{f^*g^*\chi }
\ar@{}@<30pt>[ddd]|{\textstyle =} \\
f^*(1_c\!\circ\!g)^*
\ar@{}@<-18pt>[r]|{\Rrightarrow}
\ar@{}@<-12pt>[r]|{\omega}
\ar@2[dd]_{\chi}&
(g\!\circ\!f)^*1_c^*
\ar@2[d]^{\chi}
&  (g\!\circ\! f)^*
\ar@2[dd]^{1_{(g\circ f)^*}}
\ar@2[l]_(.45){(g\circ f)^*\chi }
\\
&(1_c\!\circ\!(g\!\circ\! f))^*
\ar@{}@<12pt>[r]|{\Rrightarrow}
\ar@{}@<18pt>[r]|{\gamma}
\ar@2[rd]^{\bl^*}&                  \\
((1_c\!\circ\!g)\!\circ\!f)^*
\ar@{}@<8pt>[rr]|{\cong}
\ar@{}@<15pt>[rr]|{\xi}
\ar@2[ru]^{\aso^*}
\ar@2[rr]_{(\bl\circ 1_f)^*}& &(g\!\circ\!f)^* }
\xymatrix@R=14pt@C=22pt{
f^*g^*1_c^*\ar@2[dd]_{f^*\chi}&
& f^*g^*\ar@2[ll]_{f^*g^*\chi }
\ar@2[dd]^{\chi} \ar@2[ldd]\ar@{}@<4pt>[ldd]_{f^*1_{g^*}}
\\
& &\\
f^*(1_c\!\circ\!g)^*
\ar@2[r]^{f^*\bl^*}
 \ar@{}@<28pt>[r]|(.75){\Rrightarrow}
\ar@{}@<34pt>[r]|(.75){f^*\gamma}
\ar@2[dd]_{\chi}&f^*g^*
\ar@2[rdd]^{\chi}\ar@{}[r]|{\cong}
&(g\!\circ\!f)^*\ar@2[dd]^{1_{(g\circ f)^*}}\\
& ~ & \\
((1_c\!\circ\!g)\!\circ\!f)^*
 \ar@{}@<18pt>[r]|(.8){\Rrightarrow}
\ar@{}@<24pt>[r]|(.8){\chi}
\ar@2[rr]_{(\bl\circ 1_f)^*}& &(g\!\circ\!f)^* }
$$
$$
\xymatrix@R=18pt@C=10pt{ 1_a^*f^*g^*
\ar@{}@<-15pt>[rr]|(.5){\Rrightarrow}
\ar@{}@<-8pt>[rr]|(.5){\delta g^*}
\ar@2[d]_{1_a^*\chi}\ar@2[rd]_{\chi g^*}&&
f^*g^*
\ar@2[d]^{1_{f^*\!g^*}}
\ar@2[ll]_{\chi  f^*g^*}
\ar@{}@<26pt>[ddd]|{\textstyle =} \\
1_a^*(g\circ\!f)^*
\ar@{}@<-16pt>[r]|{\Rrightarrow}
\ar@{}@<-11pt>[r]|{\omega}
\ar@2[dd]_{\chi}
& (f\!\circ\!1_a)^*\!g^*
\ar@2[r]^{\br^*\!g^*}
\ar@2[d]^{\chi}
&f^*g^*
\ar@2[dd]^{\chi}\\
&(g\!\circ\!(f\!\circ\! 1_a))^*
\ar@{}@<4pt>[r]|(.75){\Rrightarrow}
\ar@{}@<10pt>[r]|(.75){\chi}
\ar@2[rd]^(.6){(1_g\circ \br)^*}& \\
((g\!\circ\!f)\!\circ\!1_a)^*
\ar@{}@<8pt>[rr]|{\cong}
\ar@{}@<15pt>[rr]|{\xi}
\ar@2[ru]^{\aso^*}
\ar@2[rr]_{\br^*}& &(g\!\circ\!f)^* }
\xymatrix@R=14pt@C=20pt{
1_a^*f^*g^*\ar@2[dd]_{1_a^*\chi}&
& f^*g^*\ar@2[ll]_{\chi  f^*g^*}
\ar@2[dd]^{1_{f^*\!g^*}}
 \ar@2[ldd]_{\chi}
\\
& &\\
1_a^*(g\!\circ\!f)^*
 \ar@{}@<26pt>[r]|(.7){\overset{(\ref{4})}\cong}
\ar@2[dd]_{\chi}&(g\!\circ\!f)^*
\ar@2[l]_(.45){\chi (g\circ f)^*}
\ar@2[rdd]\ar@{}@<2pt>[rdd]_(.4){1_{(g\circ f)^*}}
\ar@{}[r]|{\cong}
&f^*g^*
\ar@2[dd]^{\chi}\\
& ~ & \\
((g\!\circ\!f)\!\circ\!1_a)^*
 \ar@{}@<23pt>[r]|(.7){\Rrightarrow}
\ar@{}@<30pt>[r]|(.7){\delta}
\ar@2[rr]_{\br^*}& &(g\!\circ\!f)^* }
$$
\end{lemma}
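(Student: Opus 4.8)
The plan is as follows. All four displayed identities are equalities of invertible modifications between pseudo transformations with target $\F_a$; since a modification is determined by its components, and these components are $2$-cells of $\F_a$, it is enough to verify each identity componentwise, that is, after evaluation at an arbitrary object $y$ of the source bicategory ($y\in\Ob\F_c$ for the equations in $(ii)$, $y\in\Ob\F_a$ for those in $(i)$). Once this is done, both sides of every equation become pasting composites, in the bicategory $\F_a$, of components of $\chi$, $\omega$, $\gamma$, $\delta$, of the deformations $\xi$ of $(\mathbf{D6})$ and $(\mathbf{D7})$, of the canonical isomorphisms \eqref{4}, and of the associativity and unit constraints of $\F_a$ and $\B$. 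I would then normalize away all of this purely associativity/unit data by invoking the coherence theorem for homomorphisms of bicategories \cite[Theorem 1.6]{g-p-s} — just as explained in the Notation above — so that each side acquires a canonical form depending only on the components of $\chi$, $\omega$, $\gamma$ and $\delta$.

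After this reduction, each resulting component identity is an instance of the coherence conditions $(\mathbf{C1})$--$(\mathbf{C8})$ of the lax bidiagram, specialized by taking some of the composable $1$- and $2$-cells to be identities. For the two equations of $(ii)$ one whiskers a composable pair $f,g$ by an identity $1$-cell on the outside and reassociates using the modification $\omega$; the relevant coherences are $(\mathbf{C7})$ (the compatibility of $\gamma$ and $\delta$ with $2$-cells, taken here with the $2$-cell an identity), $(\mathbf{C8})$, and $(\mathbf{C5})$--$(\mathbf{C6})$ governing $\omega$. The two equations of $(i)$ are then the further degenerate case $f=g=1_a$, in which one also uses $\xi_{1_a}$ of $(\mathbf{D7})$ to compare $1_{1_a}^*$ with $1_{1_a^*}$, the pseudo transformation $\chi_a$ of $(\mathbf{D5})$, and the equality $\br_1=\bl_1$ of \eqref{equide} to identify $\bl_1^*$ with $\br_1^*$. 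Equivalently — and this is perhaps the most conceptual way to organize the argument — one may use that $\int_\B\F$ has already been verified to be a bicategory: its unit constraints $\overset{_\circ}\bl$, $\overset{_\circ}\br$ and its associator $\overset{_\circ}\aso$ were constructed above precisely by pasting $\gamma,\delta,\omega,\chi,\xi$, so the naturality of $\overset{_\circ}\bl$ and $\overset{_\circ}\br$, the triangle coherence, and the consequence \eqref{equide} of the bicategory axioms, all applied inside $\int_\B\F$ — written out at $(x,a)$ together with the $1$-cell $1_{(x,a)}$ for $(i)$, and at a composable pair one of whose factors is an identity for $(ii)$ — become, after discarding bracketing data via \cite[Theorem 1.6]{g-p-s}, exactly the four stated identities.

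I expect the only genuine obstacle to be the bookkeeping: each side of each equation is a large pasting diagram, and identifying the two sides comes down to inserting and cancelling the right instances of $\xi$, $\xi^{-1}$ and of \eqref{4}, keeping track of the many coherence $2$-cells. Since no idea beyond $(\mathbf{C1})$--$(\mathbf{C8})$ and the homomorphism-coherence theorem enters, I would present one of the four verifications in detail and leave the other three — which are entirely analogous, two of them being the obvious duals of the other two — to the reader.
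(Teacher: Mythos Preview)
Your proposal is correct, and in fact your second, ``more conceptual'' route is exactly the one the paper takes. The paper's proof is one short paragraph: part $(i)$ is obtained by writing out the equality $\br_{1_{(x,a)}}=\bl_{1_{(x,a)}}$ of \eqref{equide} in the bicategory $\int_\B\F$, and part $(ii)$ by writing out the two derived triangles \eqref{tri2} in $\int_\B\F$ for the composable pair
\[
\xymatrix{(f^*g^*x,a)\ar[r]^-{(1,f)}&(g^*x,b)\ar[r]^-{(1,g)}&(x,c)}
\]
with $x\in\Ob\F_c$. One small inaccuracy in your description: for $(ii)$ the relevant composable pair is not one ``whose factors is an identity'' of $\int_\B\F$ (that would be $(\chi x,1_a)$), but the pair $(1_{f^*g^*x},f)$, $(1_{g^*x},g)$ above; it is the triangles \eqref{tri2}, which involve the identity $1$-cell on the outside of a general composable pair, that are being invoked, not the middle-unit axiom itself. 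Your first approach, a direct componentwise check against $(\mathbf{C1})$--$(\mathbf{C8})$, would also work but is exactly the bookkeeping the bicategory trick avoids.
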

\begin{proof} $(i)$ follows from the equality $(\ref{equide})$ in the
bicategory $\int_\B\F$, that is,  $\br_{1_{(x,a)}}=\bl_{1_{(x,a)}}$,
for any $x\in \F_a$. Similarly, $(ii)$ is consequence of the
commutativity of triangles $(\ref{tri2})$ in $\int_\B\F$, for any
pair of composable 1-cells of the form
$$\xymatrix{(f^*g^*x,a)\ar[r]^{(1,f)}&(g^*x,b)\ar[r]^{(1,g)}&(x,c)},$$
for any $x\in\Ob\F_c$.
\end{proof}
\subsection{A cartesian square}
Let $\F=(\F,
\chi,\xi,\omega,\gamma,\delta):\B^{\mathrm{op}}\to\Bicat$ be any
given lax bidiagram of bicategories. For any bicategory $\A$ and any
lax functor $F:\A\to \B$, we shall denote by
\begin{equation}\label{f*}
\F F=(\F F, \chi_F,\xi_F,\omega_F,\gamma_F,\delta_F):\A^{\mathrm{op}}\to\Bicat
\end{equation}
the lax bidiagram of bicategories obtained by composing, in the
natural sense, $\F$ with $F$; that is, the lax bidiagram consisting
of the following data:

\vspace{0.2cm} $(\mathbf{D1})$ for each object $a$ in $\A$, the
bicategory \hspace{0.1cm}$\F_{Fa}$;

\vspace{0.2cm} $(\mathbf{D2})$ for each 1-cell $f:a\to b$ of $\A$,
the homomorphism \hspace{0.1cm}$(Ff)^*:\F_{Fb}\to \F_{Fa}$;

\vspace{0.2cm} $(\mathbf{D3})$ for each 2-cell $\xymatrix@C=0.5pc{a
\ar@/^0.7pc/[rr]^{ f} \ar@/_0.7pc/[rr]_{
g}\ar@{}[rr]|{\Downarrow\alpha} &
 &b }$ of $\A$, the pseudo transformation \hspace{0.1cm}$(F\alpha)^*:(Ff)^*\Rightarrow
 (Fg)^*$;

$(\mathbf{D4})$ for each  two composable 1-cells
$\xymatrix@C=13pt{a\ar[r]^{f}&b\ar[r]^{g}&c}$ in the bicategory
$\A$, the pseudo transformation \hspace{0.1cm} ${\chi_F}
_{_{\!g,f}}:(Ff)^*(Fg)^*\Rightarrow F(g\circ f)^*$ obtained by
pasting
$$\xymatrix@R=30pt{ &\F_{Fc}\ar[ld]\ar@{}@<4pt>[ld]_{(Fg)^*}
\ar@/_1.4pc/[rrd]|(.3){(Fg\circ Ff)^*}\ar[rrd]^{F(g\circ f)^*}& &\\
\F_{Fb}\ar@<-3pt>[rrr]_(.3){(Ff)^*}\ar@{}@<10pt>[r]|(.7){\Rightarrow}
\ar@{}@<17pt>[r]|(.7){\chi}&&\ar@{}@<8pt>[r]|(.1){\Rightarrow}
\ar@{}@<14pt>[r]|(.1){\widehat{F}^*}&\F_{Fa};}$$

$(\mathbf{D5})$ for each object $a$ of $\A$, the pseudo
transformation $
{\chi_{\!F}}_{\!_a}\!=\!\xymatrix@C=15pt{\big(1_{\F_{Fa}}\ar@2[r]^{\chi_{\!_{Fa}}}&
1_{Fa}^* \ar@2[r]^-{\widehat{F}^*_{a}}&F(1_a)^*\big);}$

$(\mathbf{D6})$ for any two vertically composable 2-cells
$\xymatrix@C=12pt{f\ar@2[r]^{\alpha}&g\ar@2[r]^{\beta}&h }$ in $\A$,
 the invertible modification \hspace{0.1cm}${\xi_F}_{_{\beta,\alpha}}=\xi_{_{F\beta,F\alpha}}\!:F(\beta)^*\circ F(\alpha)^*
 \Rrightarrow  F(\beta\cdot  \alpha)^*$;

\vspace{0.2cm}$(\mathbf{D7})$ for each 1-cell $f:a\to b$ of $\A$,
the invertible modification
\hspace{0.1cm}${\xi_F}_{_f}=\xi_{_{Ff}}\!:1_{{F(f)}^*}\Rrightarrow
1_{Ff}^*$;

$(\mathbf{D8})$ for every two horizontally composable 2-cells
$\xymatrix@C=0.5pc{a\ar@/^0.7pc/[rr]^f \ar@/_0.7pc/[rr]_{h}
\ar@{}[rr]|{\Downarrow\alpha}&  &
b\ar@/^0.7pc/[rr]^{g}\ar@/_0.7pc/[rr]_{k}
\ar@{}[rr]|{\Downarrow\beta}& & c}$ in $\A$,
$${\chi_F}_{_{\beta,\alpha}}:F(\beta\circ \alpha)^*\!\circ
{\chi_F}{_{_{g,f}}}\Rrightarrow{\chi_F}{_{_{k,h}}}\circ
 (F(\alpha)^*F(\beta)^*)$$
 is the invertible modification obtained by pasting the diagram
 below;
$$\xymatrix@R=40pt@C=40pt{F(f)^*F(g)^*\ar@2[r]^{F(\alpha)^*F(\beta)^*}
\ar@2[d]_{\chi}
\ar@{}@<-22pt>[r]|{\Rrightarrow}
\ar@{}@<-16pt>[r]|{\chi}
&
F(h)^*F(k)^*\ar@2[r]^{\chi}
& (Fk\circ Fh)^*\ar@2[d]^{\widehat{F}^*}\\
(Fg\circ Ff)^*\ar@2[r]_{\widehat{F}^*}
\ar@2[rru]\ar@{}@<-10pt>[rru]|(.6){(F\beta\circ F\alpha)^*}
&F(g\circ f)^*
\ar@{}@<16pt>[r]|(.6){\Rrightarrow}
\ar@{}@<23pt>[r]|(.6){\xi}
\ar@2[r]_{F(\beta\circ \alpha)^*}&F(k\circ h)^* }
$$
$(\mathbf{D9})$ for every three composable 1-cells
$\xymatrix@C=13pt{a\ar[r]^{f}&b\ar[r]^g&c\ar[r]^h&d}$ in $\A$, the
invertible modification $${\omega_F}_{_{h,g,f}}:\, F(\aso)^*\!\circ
({\chi_F}_{_{h\circ g,f}}\circ F(f)^*{\chi_F}_{_{h,g}} )\Rrightarrow
{\chi_F}_{_{h,g\circ f}}\circ {\chi_F}_{_{g,f}}F(h)^*$$ is obtained
from the modification pasted of the diagram below;
$$
\xymatrix@C=20pt{F(f)^*F(g)^*F(h)^*\ar@2[d]_{F(f)^*\chi}\ar@2[r]^{\chi\, F(h)^*}
\ar@{}@<-22pt>[rr]|(.3){\cong}
\ar@{}@<-16pt>[rr]|(.3){\omega}
&(Fg\circ Ff)^*F(h)^*
\ar@{}@<-22pt>[rr]|(.5){\cong}
\ar@{}@<-16pt>[rr]|(.5){\chi}
\ar@2[rd]^{\chi}\ar@2[r]^{\widehat{F}^*F(h)^*}&
F(g\circ f)^*F(h)^*\ar@2[rd]^{\chi}&\\
F(f)^*(Fh\circ Fg)^*
\ar@{}@<-22pt>[r]|(.45){\cong}
\ar@{}@<-16pt>[r]|(.45){\chi}
\ar@2[r]^{\chi} \ar@2[d]_{F(f)^*\widehat{F}^*} &((Fh\circ Fg)\circ
Ff)^*
\ar@{}@<-22pt>[rr]|(.5){\cong}
\ar@{}@<-15pt>[rr]|(.5){\xi}
\ar@2[r]^{\aso^*} \ar@2[d]_{(\widehat{F}\circ 1)^*} &
(Fh\circ(Fg\circ Ff))^* \ar@2[r]^{(1\circ \widehat{F})^*}&(Fh\circ
F(g\circ f))^* \ar@2[d]^{\widehat{F}^*}
\\
F(f)^*F(h\circ g)^*\ar@2[r]_{\chi}&(F(h\circ g)\circ Ff)^*
\ar@2[r]_{\widehat{F}^*}&F((h\circ g)\circ f)^*
\ar@2[r]_{F(\aso)^*}&F(h\circ (g\circ f))^* }
$$

$(\mathbf{D10})$ for any 1-cell $f:a\to b$ of $\A$, the invertible
modifications
$$\begin{array}{l}{\gamma_F}_{_{\!f}}:F(\bl_f)^*\circ ({\chi_F}_{_{1,f}}\circ
F(f)^*{\chi_F}_{_{\!b}})\Rrightarrow 1_{F(f)^*},\\[4pt]  {\delta_F}_{_{\!f}}:
F(\br_f)^*\circ ({\chi_F}_{_{f,1}}\circ
{\chi_F}_{_{\!a}}F(f)^*)\Rrightarrow 1_{F(f)^*},\end{array}$$ are,
respectively, canonically obtained from the modification pasted of
the diagrams below.
$$
\xymatrix@C=40pt{\ar@{}@<-55pt>[dd]|{\textstyle \gamma_F:}
F(f)^*F(1_b)^*\ar@2[d]_{\chi}
\ar@{}@<-23pt>[r]|{\cong}
\ar@{}@<-16pt>[r]|{\chi}
&
F(f)^*1_{Fb}^*\ar@2[d]^{\chi}\ar@2[l]_-{F(f)^*\widehat{F}^*}
\ar@{}@<-33pt>[r]|(.55){\cong}
\ar@{}@<-26pt>[r]|(.55){\gamma}
&F(f)^*
\ar@2[dd]^{1}\ar@2[l]_-{F(f)^*\chi}\\
(F1_b\circ Ff)^*\ar@2[d]_{\widehat{F}^*}
\ar@{}@<-23pt>[rr]|(.4){\cong}
\ar@{}@<-16pt>[rr]|(.4){\xi}
&(1_{Fb}\circ Ff)^*\ar@2[l]^-{(\widehat{F}\circ 1)^*}
\ar@2[rd]^{\bl^*}& \\
F(1_b\circ f)^*\ar@2[rr]_{F(\bl)^*}&&F(f)^*
}
$$
$$
\xymatrix@C=40pt{\ar@{}@<-55pt>[dd]|{\textstyle \delta_F:}
F(1_a)^*F(f)^*\ar@2[d]_{\chi}
\ar@{}@<-23pt>[r]|{\cong}
\ar@{}@<-16pt>[r]|{\chi}
&
1_{Fa}^*F(f)^*\ar@2[d]^{\chi}\ar@2[l]_-{\widehat{F}^*F(f)^*}
\ar@{}@<-33pt>[r]|(.55){\cong}
\ar@{}@<-26pt>[r]|(.55){\delta}
&F(f)^*
\ar@2[dd]^{1}\ar@2[l]_-{\chi F(f)^*}\\
(Ff\circ F1_a)^*\ar@2[d]_{\widehat{F}^*}
\ar@{}@<-23pt>[rr]|(.4){\cong}
\ar@{}@<-16pt>[rr]|(.4){\xi}
&(Ff\circ 1_{Fa})^*\ar@2[l]^-{(1\circ \widehat{F})^*}
\ar@2[rd]^{\br^*}& \\
F(f\circ 1_a)^*\ar@2[rr]_{F(\br)^*}&&F(f)^*
}
$$

There is an induced lax funtor
\begin{equation}\label{ilf}
\xymatrix{\bar{F}:\int_\A\F F \to \int_\B\F}
\end{equation}
given on cells by
$$
\xymatrix@C=30pt{(x,a)\ar@{}[r]|{\Downarrow (\phi,\alpha)}\ar@/^1pc/[r]^{(u,f)}\ar@/_1pc/[r]_{(v,g)}&(y,b)
}\  \overset{\bar{F}}\mapsto\  \xymatrix@C=30pt{(x,Fa)
\ar@{}[r]|{\Downarrow (\phi,F\alpha)}\ar@/^1pc/[r]^{(u,Ff)}\ar@/_1pc/[r]_{(v,Fg)}&(y,Fb),
 }
$$
and whose structure constraints are canonically given by those of
$F$, namely: For every two composable 1-cells
$\xymatrix@C=20pt{(x,a)\ar[r]^{(u,f)}&(y,b)\ar[r]^{(v,g)}&(z,c)}$ in
$\int_\A\F F$, the corresponding structure 2-cell of $\bar{F}$ for
their composition is
$$
 (\aso^{-1},\widehat{F}):\bar{F}(v,g)\circ \bar{F}(u,f)\cong \bar{F}((v,g)\circ (u,f)),
$$
where $\widehat{F}=\widehat{F}_{g,f}:Fg\circ Ff\Rightarrow F(g\circ
f)$ is the structure 2-cell of $F$, and
$$
\aso^{-1}: \widehat{F}_{g,f}^*\circ(\chi_{_{Fg,F\!f}}z\circ ( F(f)^*(v)\circ u))\cong
(\widehat{F}_{g,f}^*\circ \chi_{_{Fg,F\!f}}z)\circ (F(f)^*(v)\circ u)
$$
is the associativity isomorphism in the bicategory $\F_{Fa}$. For
$(x,a)$ any object  of the bicategory $\int_\A\F F$, the
corresponding structure 2-cell of $\bar{F}$ for its identity is
$$
(1,\widehat{F}):1_{\bar{F}(x,a)}\Rightarrow \bar{F}1_{(x,a)},
$$
where $\widehat{F}=\widehat{F}_a:1_{Fa}\Rightarrow F(1_a)$ is the
structure 2-cell of $F$, and $1$ is the is the identity 2-cell  of
the 1-cell $\widehat{F}_a^*x\circ \chi_{Fa}x:x\to F(1_a)^*x$ in the
bicategory $\F_{Fa}$.

Then, although the category of bicategories and lax functors has no
pullbacks in general, if, for any lax bidiagram of bicategories
$\F:\B^{\mathrm{op}}\to\Bicat$ as above, we denote by
\begin{equation}\label{proj2fun}
\xymatrix{P:\int_\B\F \to \B}
\end{equation}
the canonical projection 2-functor, which is defined by
$$\xymatrix@C=30pt{(x,a)\ar@{}[r]|{\Downarrow
(\phi,\alpha)}\ar@/^1pc/[r]^{(u,f)}\ar@/_1pc/[r]_{(v,g)}&(y,b)}\ \overset{P}\mapsto\
\xymatrix@C=25pt{a\ar@{}[r]|{\Downarrow \alpha}\ar@/^0.7pc/[r]^{f}
\ar@/_0.7pc/[r]_{g}&b,}
$$
 the following fact holds:
\begin{lemma}\label{lesqld} Let $\F:\B^{\mathrm{op}}\to \Bicat$ be a lax bidiagram of bicategories.
For any lax functor $F:\A\to \B$, the induced square
$$
\xymatrix{
\int_\A\F F\ar[d]_{P}\ar[r]^{\bar{F}}&\int_\B\F\ar[d]^{P}\\
\A\ar[r]^{F}&\B
}
$$
is cartesian in the category of bicategories and lax functors.
\end{lemma}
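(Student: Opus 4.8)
The plan is to verify directly the universal property of the pullback in the $1$-category of bicategories and lax functors. Since the square visibly commutes --- $P\circ\bar F$ and $F\circ P$ agree on all cells and carry the same structure constraints $\widehat F$ --- it suffices to show that, for every bicategory $\mathcal{X}$ and every pair of lax functors $G\colon\mathcal{X}\to\int_\B\F$ and $H\colon\mathcal{X}\to\A$ with $P\circ G=F\circ H$, there is a unique lax functor $K\colon\mathcal{X}\to\int_\A\F F$ with $P\circ K=H$ and $\bar F\circ K=G$. The key point, which does most of the work, is that the Grothendieck construction $\int_\A\F F$ is built componentwise over $\A$ and $\int_\B\F$: an object of $\int_\A\F F$ is exactly a pair consisting of an object $a$ of $\A$ and an object of $\int_\B\F$ lying over $Fa$; a $1$-cell $(u,f)\colon(x,a)\to(y,b)$ is exactly a $1$-cell $f\colon a\to b$ of $\A$ together with a $1$-cell $(u,Ff)$ of $\int_\B\F$ lying over $Ff$; and, more precisely, for any two objects there is an isomorphism of categories
$$\textstyle\int_\A\F F\big((x,a),(y,b)\big)\ \cong\ \A(a,b)\ \times_{\B(Fa,Fb)}\ \int_\B\F\big((x,Fa),(y,Fb)\big)$$
under which $P$ and $\bar F$ restrict to the two projections (here one uses that $F$ preserves identity $2$-cells, and that by $(\mathbf{D6})$--$(\mathbf{D7})$ for $\F F$ the coherence isomorphisms $\xi$ entering the vertical composition of $\int_\A\F F$ are precisely those of $\F$ evaluated on the $F$-images of the $2$-cells of $\A$). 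Consequently $K$ is forced on cells --- $Kx$, $K\phi$, $K\sigma$ are the cells of $\int_\A\F F$ corresponding under these bijections to $(Hx,Gx)$, $(H\phi,G\phi)$, $(H\sigma,G\sigma)$ --- and, being induced on each hom-category by the universal property of a pullback of categories, it automatically preserves vertical composition and identity $2$-cells.

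Next I would pin down the structure constraints of $K$. Because $P$ is a strict $2$-functor and we need $P\circ K=H$, the $\A$-component of $\widehat K_{\phi',\phi}$ must be $\widehat H_{\phi',\phi}$ and that of $\widehat K_x$ must be $\widehat H_x$. Their $\F_{Fa}$-components are then uniquely determined by $\bar F\circ K=G$: writing out $\widehat{\bar F K}_{\phi',\phi}=\bar F(\widehat K_{\phi',\phi})\cdot\widehat{\bar F}_{K\phi',K\phi}$ and recalling that $\widehat{\bar F}_{K\phi',K\phi}=(\aso^{-1},\widehat F)$, one sees that the $\B$-component of this composite is automatically the $\B$-component of $\widehat G_{\phi',\phi}$ --- this is just the hypothesis $P\circ G=F\circ H$ --- while its $\F_{Fa}$-component is the $\odot$-composite of the sought $\F_{Fa}$-component of $\widehat K_{\phi',\phi}$ with the \emph{invertible} $2$-cell $\aso^{-1}$ of $\F_{Fa}$. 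Since $\odot$-composition with an isomorphism (together with the bracketing isomorphisms $\xi$ occurring in its definition) is invertible, this equation has a unique solution; the non-invertibility of $\widehat F$ never intervenes, as it only affects the $\B$-component, which is already fixed by $P\circ K=H$. The analogous computation with $\widehat{\bar F}_{(x,a)}=(1,\widehat F)$ fixes $\widehat K_x$. This simultaneously yields uniqueness of $K$ and produces the only candidate.

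It then remains to check that the $K$ so defined is genuinely a lax functor, i.e.\ that $\widehat K$ satisfies the two coherence axioms. Each is an equality of $2$-cells of $\int_\A\F F$, and such a $2$-cell is determined by its image under $P$ together with its $\F_{Fa}$-component (Lemma \ref{isogro}); applying $P$ turns the axiom into the corresponding coherence axiom for $H$, which holds, so it is enough to verify the identities obtained by extracting the $\F_{Fa}$-components. This is the only laborious step: it is a diagram chase that unwinds the explicit formulas for the composition, identities, and associativity and unit constraints of $\int_\A\F F$ and appeals to the coherence conditions $(\mathbf{C1})$--$(\mathbf{C8})$ satisfied by $\F F$ together with the lax-functor axioms for $G$. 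The bookkeeping is heavier than in the classical category case precisely because $\bar F$ is only lax --- its constraint $(\aso^{-1},\widehat F)$ is not invertible --- so one cannot simply transport the axioms for $G$ across $\bar F$; I expect this to be the main obstacle, and the cleanest way to organise it is to carry out the chase directly inside $\int_\A\F F$ rather than trying to reflect it from $\int_\B\F$.
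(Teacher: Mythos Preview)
Your approach is essentially the same as the paper's: both verify the pullback property directly by showing that a compatible pair $(G,H)$ (called $(M,L)$ there) determines a unique factorisation. The paper, however, organises the construction of the structure constraints of the factorising lax functor more simply than you do. Rather than writing $\widehat{\bar F K}=\bar F(\widehat K)\cdot\widehat{\bar F}$ and solving for the $\F_{Fa}$-component of $\widehat K$ by cancelling the invertible $\aso^{-1}$, the paper observes that since $P$ is a strict $2$-functor and $PG=FH$, each structure $2$-cell of $G$ already has the form $\widehat G_{g,f}=(\widehat D_{g,f},\,F\widehat H_{g,f}\cdot\widehat F_{Hg,Hf})$ for a uniquely determined $2$-cell $\widehat D_{g,f}$ in $\F_{FHa}$, and then simply \emph{reuses} this first component, setting $\widehat K_{g,f}=(\widehat D_{g,f},\,\widehat H_{g,f})$. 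The two descriptions agree, of course, but the paper's avoids the detour through the lax composite $\bar F\circ K$ and makes uniqueness immediate.

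On the ``laborious'' verification that $K$ satisfies the lax-functor coherence axioms: the paper omits it entirely, so you are being more scrupulous than the authors. Your diagnosis that the difficulty lies in the non-invertibility of $\widehat{\bar F}$ is slightly off, though. A $2$-cell of $\int_\A\F F$ is an equal pair iff its $\A$-component and its $\F_{Fa}$-component are; the $\A$-component of each coherence axiom for $K$ is exactly the corresponding axiom for $H$, while $\bar F$ preserves first components of $2$-cells verbatim, so the $\F_{Fa}$-component of the axiom for $K$ is the $\F_{Fa}$-component of the corresponding axiom for $G=\bar F K$, modulo the isomorphisms coming from $\widehat{\bar F}=(\aso^{-1},\widehat F)$ and the difference between the associators of the two Grothendieck bicategories. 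That bookkeeping is real but routine; it does not require the full strength of $(\mathbf{C1})$--$(\mathbf{C8})$ as you suggest.
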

\begin{proof}
Any pair of lax functors, say $L:\C\to \A$ and $M:\C\to \int_\B\F$,
such that $FL=PM$ determines a unique lax functor $N:\C\to \int_\A\F
F$
$$
\xymatrix{ \C\ar[rdd]_L\ar[rrd]^M\ar@{.>}[rd]|N&&\\
&\int_\A\F F\ar[d]^(.4){P}\ar[r]^(.4){\bar{F}}&\int_\B\F\ar[d]^{P}\\
&\A\ar[r]^{F}&\B
}
$$
such that $PN=L$ and $\bar{F}N=M$, which is defined as follows:
Observe that the lax functor $M$ carries any object $a\in \Ob\C$ to
an object of $\int_\B\F$ which is necessarily written in the form
$Ma=(Da, FLa)$ for some object $Da$ of the bicategory $\F_{FLa}$.
Similarly, for any 1-cell $f:a\to b$ in $\C$, we have $Mf=(Df,FLf)$,
for some 1-cell $Df:Da\to FL(f)^*Db$ in $\F_{FLa}$, and, for any
2-cell $\alpha:f\Rightarrow g\in \C(a,b)$, we have
$M\alpha=(D\alpha,FL\alpha)$, for $D\alpha:FL(\alpha)^*Db\circ
Df\Rightarrow Dg$ a 2-cell in $\F_{FLa}$. Also, for any pair of
composable 1-cells $a\overset{f}\to b\overset{g}\to c$ and any
object $a$
 in $\C$, the structure
2-cells of $M$ can be respectively written in a similar form
  as
 $$\widehat{M}_{g,f}=(\widehat{D}_{g,f},F\widehat{L}_{_{g,f}}\circ \widehat{F}_{_{Lg,Lf}}):
 (Dg,FLg)\circ (Df,FLf)\Rightarrow (D(g\circ f),FL(g\circ f))$$
$$\widehat{M}_{a}=(\widehat{D}_{a},F(\widehat{L}_a)\circ \widehat{F}_{La}):1_{(Da,FLa)}\Rightarrow (D1_a,FL1_a),$$
for some 2-cells $\widehat{D}_{g,f}$ and $\widehat{D}_{a}$ of the
bicategory $\F_{FLa}$. Then, the claimed $N:\C\to \int_\A\F F$ is
the lax functor which acts on cells by
$$
\xymatrix@C=0.5pc{a
\ar@/^0.7pc/[rr]^{ f} \ar@/_0.7pc/[rr]_{
g}\ar@{}[rr]|{\Downarrow\alpha} &
 &b} \ \overset{N}\mapsto
\xymatrix@C=1.4pc{(Da,La)
\ar@/^1.2pc/[rr]^{ (Df,Lf)} \ar@/_1.2pc/[rr]_{(Dg,Lg)}\ar@{}[rr]|{\Downarrow(D\alpha,L\alpha)} &
 &(Db,Lb)}
$$
and whose respective structure 2-cells,
 for any pair of composable 1-cells $a\overset{f}\to b\overset{g}\to c$ and any object $a$
 in $\C$, are
 $$\widehat{N}_{g,f}=(\widehat{D}_{g,f},\widehat{L}_{_{g,f}}):
 (Dg,Lg)\circ (Df,Lf)\Rightarrow (D(g\circ f),L(g\circ f)),$$
 $$\widehat{N}_{a}=(\widehat{D}_{a},\widehat{L}_a):1_{(Da,La)}\Rightarrow (D1_a,L1_a).$$
\end{proof}

\begin{remark}{\em \label{reotlax} There exist different other `dual' notions of \emph{bidiagrams of bicategories}, depending on
the covariant or contravariant choices for $(\mathbf{D2})$ and
$(\mathbf{D3})$, and the direction of the pseudo transformations
$\chi$ in $(\mathbf{D4})$ and $(\mathbf{D5})$, but the results we
present about lax bidiagrams are similarly proved for the different
cases. For example, in a \emph{covariant oplax bidiagram of
bicategories} $\F:\B\to \Bicat$ the data in $(\mathbf{D2})$ are
specified with homomorphisms $f_*:\F_a\to \F_b$ for the $1$-cells
$f:a\to b$ of $\B$, while in $(\mathbf{D4})$, the pseudo
transformations are of the form $\chi_{g,f}:(g\circ f)_*\Rightarrow
g_*f_*$. The corresponding data in $(\mathbf{D5}), (\mathbf{D8}),
(\mathbf{D9})$ and $(\mathbf{D10})$ change in a natural way. The
Grothendieck construction on such a bidiagram, has now $1$-cells
$(u,f):(x,a)\to (y,b)$ given by $f:a\to b$ a $1$-cell in $\B$ and
$u:f_*x\to y$ a $1$-cell in $\F_b$. The $2$-cells
$(\phi,\alpha):(u,f)\Rightarrow (v,g)$ are now given by a $2$-cell
$\alpha: f\Rightarrow g$ in $\B$ and a $2$-cell $\phi:u\Rightarrow
v\circ \alpha_*x$. The compositions and constraints of this
bicategory are defined in the same way as in the contravariant lax
case.}\end{remark}

\section{The homotopy cartesian square induced by a lax bidiagram}
For the general background  on simplicial sets we mainly refer to
the book by Goerss and Jardine \cite{g-j}. In particular, we will
use the following result, which can be easily proved from the
discussion made in \cite[IV, 5.1]{g-j} and Quillen's Lemma
\cite[Lemma in page 14]{quillen} (or \cite[IV, Lemma 5.7]{g-j}):

\begin{lemma}\label{simlem} Let  $p: E\to B$ be an arbitrary simplicial map.
For any $n$-simplex $x\in B_n$, let $p^{-1}(x)$ be the simplicial
set defined by the pullback diagram
$$
\xymatrix{
p^{-1}(x)\ar[r]\ar[d]&E\ar[d]^{p}\\ \Delta[n]\ar[r]^{\Delta x}&B,
}
$$
where $\Delta[n]=\Delta(-,[n])$ is the standard simplicial
$n$-simplex, whose $m$-simplices are the maps $[m]\to [n]$ in the
simplicial category $\Delta$, and $\Delta x: \Delta[n]\to B$ denotes
the simplicial map such that $\Delta x(1_{[n]})=x$.

Suppose that, for every $n$-simplex  $x\in B_n$, and for any map
$\sigma:[m]\to [n]$ in the simplicial category,  the induced
simplicial map  $p^{-1}(\sigma^*x)\to p^{-1}(x)$
$$
\xymatrix@R=10pt{p^{-1}(\sigma^*x)\ar@{.>}[rd]\ar[rrd]\ar[dd]&& \\
&p^{-1}(x)\ar[dd]\ar[r] &E \ar[dd]^{p}\\
 \Delta[m]\ar[rd]_{\Delta\sigma}\ar[rrd]^(.35){\Delta(\sigma^*x)}|!{[ru];[rd]}\hole&& \\
 & \Delta[n]\ar[r]_{\Delta x}&B}
$$
gives a homotopy equivalence on geometric realizations
$|p^{-1}(\sigma^*x)|\simeq |p^{-1}(x)|$. Then, for each vertex $v\in
B_0$, the induced square of spaces
$$
\xymatrix{
|p^{-1}(v)|\ar[r]\ar[d]&|E|\ar[d]^{|p|}\\ pt\ar[r]^{|v|}&|B|
}
$$
is homotopy cartesian, that is, $|p^{-1}(v)|$ is homotopy equivalent
to the homotopy
 fiber of the  map $|p|:|E|\to |B|$ over the 0-cell $|v|$ of $|B|$.
\end{lemma}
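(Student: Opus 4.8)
The plan is to recognise $p\colon E\to B$, up to weak equivalence, as the projection of a homotopy colimit indexed by the simplex category of $B$, and then to invoke the standard ``Theorem~B'' machinery for simplicial sets. Write $\mathcal{J}$ for the category of simplices of $B$: its objects are the pairs $([n],x)$ with $x\in B_n$, and a morphism $([m],y)\to([n],x)$ is an operator $\sigma\colon[m]\to[n]$ of $\Delta$ with $\sigma^{*}x=y$. Pulling back $p$ along the classifying maps $\Delta x\colon\Delta[n]\to B$ yields a covariant functor
$$
P\colon\mathcal{J}\longrightarrow\Set^{\Delta^{\mathrm{op}}},\qquad ([n],x)\longmapsto p^{-1}(x)=E\times_{B}\Delta[n],
$$
whose value on a morphism $\sigma$ is precisely the map $p^{-1}(\sigma^{*}x)\to p^{-1}(x)$ of the hypothesis. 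Since colimits in the topos of simplicial sets are universal, and every simplicial set is the colimit of its simplices, one has $B=\mathrm{colim}_{\mathcal{J}}\Delta[-]$ and hence, base changing along $p$, $E=E\times_{B}B=\mathrm{colim}_{\mathcal{J}}P$, compatibly with $p$ and with the projection $\mathrm{colim}_{\mathcal{J}}P\to\mathrm{colim}_{\mathcal{J}}\Delta[-]$.

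I would then pass from colimits to homotopy colimits. Forming the usual (bar construction) models, there are canonical maps $\mathrm{hocolim}_{\mathcal{J}}\Delta[-]\to B$ and $\mathrm{hocolim}_{\mathcal{J}}P\to E$, together with a projection $\mathrm{hocolim}_{\mathcal{J}}P\to\mathrm{hocolim}_{\mathcal{J}}\ast=\mathrm{N}\mathcal{J}$ lying over the last--vertex map $\mathrm{N}\mathcal{J}\to B$. By the discussion in \cite[IV, 5.1]{g-j}, the first two maps are weak equivalences, and, each $\Delta[n]$ being contractible, the last--vertex map $\mathrm{N}\mathcal{J}\to B$ is a weak equivalence as well. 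All of these equivalences are compatible with the projections down to $B$, so it suffices to prove that, for the vertex $v\in B_{0}$ viewed as the object $([0],v)$ of $\mathcal{J}$, the square
$$
\xymatrix{
P([0],v)\ar[r]\ar[d]&\mathrm{hocolim}_{\mathcal{J}}P\ar[d]\\
pt\ar[r]^-{([0],v)}&\mathrm{N}\mathcal{J}
}
$$
is homotopy cartesian, where $P([0],v)=E\times_{B}\Delta[0]=p^{-1}(v)$.

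Finally, the hypothesis asserts exactly that the functor $P$ carries every morphism of $\mathcal{J}$ to a homotopy equivalence, since every such morphism is one of the operators $\sigma$ considered there. For a diagram that is homotopy constant in this way, Quillen's Lemma \cite[Lemma in page 14]{quillen} (equivalently \cite[IV, Lemma 5.7]{g-j}) shows that the projection $\mathrm{hocolim}_{\mathcal{J}}P\to\mathrm{N}\mathcal{J}$ behaves, up to homotopy, like a fibration whose fibre over the vertex corresponding to an object $c$ is $P(c)$; taking $c=([0],v)$ gives the square above, and hence the lemma. I expect the main obstacle to be the combinatorial input of the second paragraph --- namely that $\mathrm{hocolim}_{\mathcal{J}}P\to E$ is a weak equivalence for an \emph{arbitrary} simplicial map $p$, with no fibrancy assumption (this is what allows the \emph{strict} fibres $p^{-1}(x)$ to be used rather than homotopy fibres) --- together with the bookkeeping needed to check that the three weak equivalences above are genuinely compatible with the projections onto $B$, so that the conclusion really concerns the original square and the $0$-cell $v$.
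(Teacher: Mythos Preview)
Your proposal is correct and follows essentially the same approach as the paper: the paper does not give a detailed proof but merely states that the lemma ``can be easily proved from the discussion made in \cite[IV, 5.1]{g-j} and Quillen's Lemma \cite[Lemma in page 14]{quillen} (or \cite[IV, Lemma 5.7]{g-j})'', and your argument is precisely an expansion of how those two ingredients combine --- the simplex-category/hocolim description from Goerss--Jardine IV.5.1 together with Quillen's Lemma for the homotopy-constant diagram $P$.
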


Like categories, bicategories are closely related to spaces through
the classifying space construction. We shall recall briefly from
\cite[Theorem 6.1]{ccg-1} that the {\em classifying space} of a
(small) bicategory can be defined by means of several, but always
homotopy equivalent, simplicial and pseudo simplicial objects that
have been characteristically associated to it. For instance, the
classifying space $\BB \B$ of the bicategory $\B$ may be thought of
as
$$
\BB\B=|\Delta\B|,
$$
the geometric realization of its (non-unitary) {\em
 geometric nerve} \cite[Definition 4.3]{ccg-1}; that is, the
 simplicial set
$$
\Delta\B:\Delta^{\!^{\mathrm{op}}} \to \ \Set,\hspace{0.6cm}
[n]\mapsto \lfunc([n],\B),
$$ whose $n$-simplices are  all lax
functors ${\z:[n]\to \B}$. Here, the ordered sets
${[n]=\{0,1,\dots,n\}}$ are considered as categories with only one
morphism $(i,j):i\to j$ when $0\leq i\leq j\leq n$, so that a
non-decreasing map $[m]\rightarrow [n]$ is the same as a functor.
Hence, a geometric $n$-simplex of $\B$ is a list of cells of the
bicategory
$$
\z=(\z_i,\z_{i,j},\widehat{\z}_{i,j,k},\widehat{\z}_i)
$$
which is geometrically represented by a diagram in $\B$ with the
shape of the 2-skeleton of an oriented standard $n$-simplex, whose
faces are triangles
$$
\xymatrix{ & \z_j \ar@{}@<3pt>[d]|(.6){\Downarrow\,\widehat{\z}_{i,j,k}}\ar[dr]^{\z_{j,k}}             \\
                     \z_i   \ar[ur]^{\z_{i,j}} \ar[rr]_{\z_{i,k}} && \z_k}
$$
with objects $\z_i$ placed on the vertices, 1-cells
$\z_{i,j}:\z_i\rightarrow \z_j$ on the edges, and 2-cells
$\widehat{\z}_{i,j,k}:\z_{j,k}\circ \z_{i,j}\Rightarrow \z_{i,k}$ in
the inner, together with 2-cells $\widehat{\z}_i:
1_{\z_i}\Rightarrow \z_{i,i}$.  These data are required to satisfy
the condition  that, for $0\leq i\leq j\leq k\leq l\leq n$,  each
tetrahedron is commutative in the sense that
$$
\xymatrix@C=30pt@R=30pt{\z_i\ar[r]^{\z_{i,l}}\ar[rd]|{\z_{i,k}}\ar[d]_{\z_{i,j}}& \z_l
\ar@{}[ld]|(0.3){\widehat{\z}\Uparrow}
\ar@{}@<7ex>[d]|{\textstyle =}\\
\z_j \ar@{}@<8pt>[r]|(.3){\Rightarrow}
\ar@{}@<14pt>[r]|(.3){\widehat{\z}}
\ar[r]_{\z_{j,k}}&\z_k\ar[u]_{\z_{k,l}}}\hspace{0.4cm}
\xymatrix@C=30pt@R=30pt{\z_i\ar[r]^{\z_{i,l}}\ar[d]_{\z_{i,j}}
\ar@{}[rd]|(0.3){\widehat{\z}\Uparrow}& \z_l \\
\z_j
\ar@{}@<8pt>[r]|(.7){\Leftarrow}
\ar@{}@<14pt>[r]|(.7){\widehat{\z}}
\ar[r]_{\z_{j,k}}\ar[ru]|{\z_{j,l}}&\z_k\ar[u]_{\z_{k,l}} }
$$
and, moreover,
$$
\xymatrix{\ar@{}@<-7pt>[rr]^(.2){1_{\z_i}}\ar@{}@<20pt>[d]|(.53){\Rightarrow}|(.36){\widehat{\z}}&
\z_i\ar@/_0.9pc/[ld]\ar[rd]^{\z_{i,j}}
\ar@/^0.7pc/[ld]
\ar@{}@<8pt>[d]|(.7){\Rightarrow}|(.5){\widehat{\z}}
& \ar@{}@<10pt>[d]|(.5){\textstyle =}\ar@{}@<28pt>[d]|(.5){\textstyle \br_{\z_{i,j}}\,,}
\\\z_i\ar[rr]_{\z_{i,j}}\ar@{}@<0.5pt>[rr]^(.37){\z_{i,i}}&&\z_j}
\hspace{0.4cm}
\xymatrix{\ar@{}@<-7pt>[rr]^(.2){1_{\z_j}}\ar@{}@<20pt>[d]|(.53){\Rightarrow}|(.36){\widehat{\z}}&
\z_j\ar@{}@<8pt>[d]|(.7){\Rightarrow}|(.5){\widehat{\z}} &
\ar@{}@<10pt>[d]|(.5){\textstyle =}\ar@{}@<28pt>[d]|(.5){\textstyle
\bl_{\z_{i,j}}.}
\\ \z_j\ar@/_0.7pc/[ru] \ar@/^0.9pc/[ru]
\ar@{}@<0.7pt>[rr]^(.42){\z_{j,j}}&&\z_i\ar[lu]_{\z_{i,j}}
\ar[ll]^{\z_{i,j}}}
$$

If $\sigma:[m]\to [n]$ is any  map in $\Delta$, that is, a functor,
the induced $\sigma^*:\Delta\B_n\to\Delta\B_m$ carries any
${\z:[n]\to \B}$ to  ${\sigma^*\z=\z \sigma:[m]\to \B}$, the
composite lax functor of $\z$ with $\sigma$.

On a small category $\C$, viewed as a bicategory in which all
2-cells are identities, the geometric nerve construction $\Delta\C$
gives the usual Grothendieck nerve of the category \cite{groth},
since, for any integer $n\geq 0$, we have
$\lfunc([n],\C)=\mathrm{Func}([n],\C)$. Hence, the space
$\BB\C=|\Delta\C|$ of a category $\C$, is the usual classifying
space of the category, as considered by Quillen in \cite{quillen}.
   In particular, the geometric nerve of the category $[n]$ is precisely
$\Delta[n]$, the  standard simplicial $n$-simplex, so the notation
is not confusing. Furthermore, for any bicategory $\B$, the
simplicial map $\Delta\z:\Delta[n]\to\Delta\B$ defined by a
$n$-simplex $\z\in \Delta\B_n$,  that is, such that
$\Delta\z(1_{[n]})=\z$,
 is precisely the simplicial map obtained by taking geometric nerves
 on the lax functor $\z:[n]\to\B$.  Thus,
if $\sigma:[m]\to [n]$ is any  map in $\Delta$, then
$$\Delta(\sigma^*\z)=\Delta(\z\sigma)=\Delta\z\Delta\sigma:\Delta[m]\to \Delta\B.$$

The following fact, which is proved in \cite[Proposition
7.1]{ccg-1}, will be used repeatedly in our subsequent discussions:

\begin{lemma}\label{trans}
If $F,G:\A\to\B$ are two lax functors between bicategories, then any
lax or oplax transformation, $\varepsilon:F\Rightarrow G$,
canonically defines a homotopy $\BB\varepsilon:\BB F\simeq \BB G$
between the induced maps on classifying spaces $\BB F,\BB
G:\BB\A\to\BB\B$.
\end{lemma}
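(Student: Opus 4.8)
The plan is to realize the homotopy as the map induced on classifying spaces by a single lax functor defined on a cylinder. Let $[1]$ denote the category $\{0\to 1\}$, regarded as a locally discrete bicategory; its geometric nerve $\Delta[1]$ is the standard $1$-simplex, so $\BB[1]\cong[0,1]$. First I would construct, from the lax transformation $\varepsilon=(\varepsilon,\widehat\varepsilon)\colon F\Rightarrow G$, a lax functor $H=H_\varepsilon\colon\A\times[1]\to\B$ whose restriction along $\A\times\{0\}\hookrightarrow\A\times[1]$ equals $F$ and whose restriction along $\A\times\{1\}\hookrightarrow\A\times[1]$ equals $G$.

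On cells, $H$ is forced on the two ``ends'': $H(a,0)=Fa$, $H(u,(0,0))=Fu$, $H(\sigma,1_{(0,0)})=F\sigma$, with the structure constraints of $F$, and dually $H(a,1)=Ga$, $H(u,(1,1))=Gu$, $H(\sigma,1_{(1,1)})=G\sigma$ with the constraints of $G$. For a $1$-cell $(u,(0,1))\colon(a,0)\to(b,1)$, with $u\colon a\to b$ in $\A$, I would put $H(u,(0,1))=Gu\circ\varepsilon a$ and, on $2$-cells, $H(\sigma,1_{(0,1)})=G\sigma\circ 1_{\varepsilon a}$. The structure $2$-cell $\widehat H$ of a composite of $1$-cells whose middle object lies over $0$ would be pasted from an associativity constraint $\aso$ of $\B$, the whiskering of $\widehat\varepsilon_u$ by $Gv$, and the whiskering of $\widehat G_{v,u}$ by $\varepsilon a$; when the middle object lies over $1$ it uses only associativity constraints and $\widehat G_{v,u}$; over two objects at level $0$ it is $\widehat F$, over two objects at level $1$ it is $\widehat G$; and the unit constraints of $H$ would be $\widehat F_a$ and $\widehat G_a$. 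Crucially, only $\widehat\varepsilon_u$ in its correct lax direction is ever needed (never an inverse), so the recipe is valid for an arbitrary lax transformation; for an oplax transformation one instead sets $H(u,(0,1))=\varepsilon b\circ Fu$ and runs the mirror-image construction, which settles that case too.

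Granting $H$, the rest is formal. The assignment $[n]\mapsto\lfunc([n],\A\times[1])$ is naturally isomorphic to $[n]\mapsto\lfunc([n],\A)\times\lfunc([n],[1])$, since the projections out of $\A\times[1]$ are $2$-functors and every piece of structure of a product bicategory is componentwise; hence $\Delta(\A\times[1])\cong\Delta\A\times\Delta[1]$, and since geometric realization preserves finite products, $\BB(\A\times[1])\cong\BB\A\times\BB[1]\cong\BB\A\times[0,1]$. Composing the induced simplicial map $\Delta H\colon\Delta(\A\times[1])\to\Delta\B$ with this homeomorphism on realizations produces a continuous map $\BB\varepsilon\colon\BB\A\times[0,1]\to\BB\B$. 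Since $H$ restricts to $F$ over the object $0$ and to $G$ over the object $1$, and the classifying space construction is functorial on lax functors and respects these restrictions, $\BB\varepsilon$ restricts to $\BB F$ on $\BB\A\times\{0\}$ and to $\BB G$ on $\BB\A\times\{1\}$; that is, it is the desired homotopy $\BB F\simeq\BB G$. For an oplax $\varepsilon$ the same argument applies to the mirror-image $H$.

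The hard part will be verifying that $H$ is genuinely a lax functor: the associativity pentagon for triples of composable $1$-cells, and the two unit triangles. These break up according to how the objects in play are distributed over the two levels $0,1$. The all-$0$ and all-$1$ subcases are precisely the coherence axioms of $F$ and of $G$; each genuinely mixed subcase, after a routine rearrangement of the pasting, reduces to a single use of the coherence axiom for the $2$-cells $\widehat\varepsilon$ of the transformation, combined with the pentagon and triangle identities of $\B$. I expect no conceptual obstruction, only a bounded amount of diagram bookkeeping; this is the content behind the cited \cite[Proposition~7.1]{ccg-1}.
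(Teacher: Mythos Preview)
Your approach is correct and is the standard cylinder construction. Note, however, that the paper does not actually give its own proof of this lemma: it simply cites \cite[Proposition~7.1]{ccg-1}, as you yourself anticipate in your final sentence. Your sketch is therefore not so much a comparison target as a reconstruction of the cited result, and the construction you outline (the lax functor $H_\varepsilon:\A\times[1]\to\B$ built from $F$, $G$, and the components and naturality $2$-cells of $\varepsilon$, together with the product identification $\Delta(\A\times[1])\cong\Delta\A\times\Delta[1]$) is exactly the method used there.
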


Suppose that $\F:\B^{\mathrm{op}}\to \Cat$ is a functor, where $\B$
is any small category, such that for every morphism $f:b\to c$ of
$\B$ the induced map $\BB f^*:\BB\F_c\to\BB\F_b$ is a homotopy
equivalence. Then, by Quillen's Lemma \cite[Lemma in page
14]{quillen}, the induced commutative square of spaces
$$
\xymatrix{\BB\F_a\ar[r] \ar[d]&\mathrm{hocolim}_\B\BB\F\ar[d]\\
pt\ar[r]&\BB\B}
$$
is homotopy cartesian. By Thomason's Homotopy Colimit Theorem
\cite{thomason}, there is a natural homotopy equivalence
$\mathrm{hocolim}_\B\BB\F \simeq \BB\int_\B\F$. Therefore, there is
a homotopy cartesian square
$$
\xymatrix{\BB\F_a\ar[r] \ar[d]&\BB\int_\B\F\ar[d]\\
pt\ar[r]&\BB\B.}
$$

 We are now ready to state and prove the
following important result in this paper, which generalizes the
result above, as well as the results in \cite[Theorem 7.4]{ccg-1}
and \cite[Theorem 4.3]{cegarra}:

\begin{theorem}\label{th1} Let $\F=(\F, \chi,\xi,\omega,\gamma,\delta):\B^{\mathrm{op}}\to\Bicat$ be any given lax bidiagram of bicategories. For any object $a\in \Ob\B$, there is a commutative square in $\Bicat$
\begin{equation}\label{squbicat1}
\begin{array}{c}\xymatrix{
\F_a\ar[r]^{J}\ar[d]&\int_\B\F\ar[d]^{P}\\ [0]\ar[r]^{a}&\B
}
\end{array}
\end{equation}
where $P$ is the projection $2$-functor  $(\ref{proj2fun})$, $a$
denotes the normal lax functor carrying $0$ to $a$, and $J$ is the
natural embedding homomorphism $(\ref{emj})$ described below, such
that, whenever each 1-cell $f:b\to c$ in $\B$ induces a homotopy
equivalence $\BB f^*\!:\BB\F_c\simeq \BB\F_b$, then  the square of
spaces induced on classifying spaces below is homotopy cartesian.
\begin{equation}\label{hsqubicat}
\begin{array}{c}\xymatrix{
\BB\F_a\ar[r]^-{\BB J}\ar[d]&\BB\!\int_\B\F\ar[d]^{\BB P}\\ pt \ar[r]^{\BB a}&\BB\B
}
\end{array}
\end{equation}
\end{theorem}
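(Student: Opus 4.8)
The plan is to apply Lemma~\ref{simlem} to the simplicial map $p=\Delta P\colon\Delta\!\int_\B\F\to\Delta\B$ obtained by taking geometric nerves of the projection $2$-functor $P$ of $(\ref{proj2fun})$. Since $\BB(-)=|\Delta(-)|$, the realization $|p|$ is $\BB P$, and the conclusion of Lemma~\ref{simlem} at the $0$-simplex $a$ will be exactly the square $(\ref{hsqubicat})$, once its upper left corner $|(\Delta P)^{-1}(a)|$ has been identified with $\BB\F_a$ compatibly with $\BB J$. So three things must be done: identify the simplicial fibres of $\Delta P$; verify the homotopy-equivalence hypothesis of Lemma~\ref{simlem}; and match the resulting square with $(\ref{hsqubicat})$ and check that $(\ref{squbicat1})$ commutes.

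First I would analyse the fibres. Given an $n$-simplex $\z\in\Delta\B_n$, that is, a lax functor $\z\colon[n]\to\B$, Lemma~\ref{lesqld} applied to $F=\z$ says that the square with corners $\int_{[n]}\F\z$, $\int_\B\F$, $[n]$, $\B$ is cartesian in the category of bicategories and lax functors. Evaluating lax functors out of $[m]$, and using $\Delta_m(-)=\lfunc([m],-)$, $\Delta[n]_m=\mathrm{Func}([m],[n])$, this cartesian square becomes a pullback of simplicial sets; hence, in the notation of Lemma~\ref{simlem}, $(\Delta P)^{-1}(\z)=\Delta\!\int_{[n]}\F\z$ and $|(\Delta P)^{-1}(\z)|=\BB\!\int_{[n]}\F\z$. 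Since $\Delta(\sigma^*\z)=\Delta\z\circ\Delta\sigma$, the comparison map $(\Delta P)^{-1}(\sigma^*\z)\to(\Delta P)^{-1}(\z)$ attached to a map $\sigma\colon[m]\to[n]$ of the simplicial category is $\Delta\overline{\sigma}$, where $\overline{\sigma}\colon\int_{[m]}\F(\z\sigma)\to\int_{[n]}\F\z$ is the lax functor $(\ref{ilf})$ built from $\sigma\colon[m]\to[n]$ and the lax bidiagram $\F\z$ over $[n]$ (note $\F(\z\sigma)=(\F\z)\sigma$). For the vertex $a\colon[0]\to\B$ one gets $|(\Delta P)^{-1}(a)|=\BB\!\int_{[0]}\F a$, and the composite homomorphism $\F_a\hookrightarrow\int_{[0]}\F a\xrightarrow{\overline{a}}\int_\B\F$ is precisely $J$; the first arrow being a biequivalence, this identifies that corner with $\BB\F_a$ and the map with $\BB J$.

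The heart of the argument is the following sublemma, which I would establish next: for any lax bidiagram $\G=(\G,\chi,\xi,\omega,\gamma,\delta)\colon[n]^{\mathrm{op}}\to\Bicat$ over a finite ordinal, the vertex inclusion $\iota_0\colon\G_0\hookrightarrow\int_{[n]}\G$, $y\mapsto(y,0)$, induces a homotopy equivalence on classifying spaces, \emph{with no further hypothesis on $\G$}. The construction is a \emph{collapse homomorphism} $r\colon\int_{[n]}\G\to\G_0$ sending $(y,i)$ to $(0,i)^*y$ (where $(0,i)\colon0\to i$ is the unique such $1$-cell of $[n]$), a $1$-cell $(u,(i,j))\colon(y,i)\to(y',j)$ to the composite $(0,i)^*y\xrightarrow{(0,i)^*u}(0,i)^*(i,j)^*y'\xrightarrow{\chi}(0,j)^*y'$ in $\G_0$, and with structure constraints assembled out of $\chi$, $\xi$, $\omega$ using the coherence conditions $(\mathbf{C1})$--$(\mathbf{C8})$. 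One checks $r\circ\iota_0\cong 1_{\G_0}$ (via $\chi_0$) and $r\circ\iota_i\cong(0,i)^*\colon\G_i\to\G_0$ for every vertex $i$, where $\iota_i(y)=(y,i)$; moreover the $1$-cells $\epsilon_{(y,i)}=(1_{(0,i)^*y},(0,i))\colon\bigl((0,i)^*y,0\bigr)\to(y,i)$ assemble into an oplax transformation $\iota_0\circ r\Rightarrow 1_{\int_{[n]}\G}$. By Lemma~\ref{trans}, $\BB\iota_0$ and $\BB r$ are then mutually inverse homotopy equivalences. This is the step I expect to be the main obstacle: writing down $r$ and the transformation $\epsilon$ explicitly, and verifying the axioms, is exactly where all the coherence data of a lax bidiagram and the conditions $(\mathbf{C1})$--$(\mathbf{C8})$ get genuinely used.

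Finally I would assemble everything. Taking $\G=\F\z$: each $1$-cell $(i,j)$ of $[n]$ is sent by $\z$ to the $1$-cell $\z_{i,j}$ of $\B$, so the homomorphism $(i,j)^*$ of $\F\z$ is $\z_{i,j}^*$, and by the hypothesis of the theorem $\BB(0,i)^*$ is a homotopy equivalence; since $\BB(r\circ\iota_i)=\BB(0,i)^*$ and $\BB r$ is a homotopy equivalence, $\BB\iota_i$ is a homotopy equivalence for every vertex $i$, and the same holds for the bidiagram $\F(\z\sigma)$ over $[m]$ because $\z\sigma$ also sends every $1$-cell of $[m]$ to a $1$-cell of $\B$. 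For $\sigma\colon[m]\to[n]$, functoriality of the construction $(\ref{ilf})$ gives $\overline{\sigma}\circ\iota_0\cong\iota_{\sigma(0)}$ (vertex inclusions from $\int_{[0]}$), so $\BB\overline{\sigma}\circ\BB\iota_0=\BB\iota_{\sigma(0)}$; as both $\BB\iota_0$ and $\BB\iota_{\sigma(0)}$ are homotopy equivalences, so is $\BB\overline{\sigma}$, by the $2$-out-of-$3$ property. Thus the hypothesis of Lemma~\ref{simlem} holds for $p=\Delta P$, and applying it at the vertex $a$ gives a homotopy cartesian square with corners $|(\Delta P)^{-1}(a)|$, $\BB\!\int_\B\F$, $pt$, $\BB\B$; by the identifications above this is the square $(\ref{hsqubicat})$. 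The commutativity of $(\ref{squbicat1})$ in $\Bicat$ is immediate from the definitions of $P$, $J$ and of the normal lax functor $a\colon[0]\to\B$, and taking geometric nerves of $(\ref{squbicat1})$ yields the commutative square of simplicial sets whose realization is the homotopy cartesian square just produced.
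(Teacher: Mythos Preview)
Your approach is essentially the paper's: identify the fibres of $\Delta P$ via Lemma~\ref{lesqld}, prove a collapse lemma (your ``sublemma'', the paper's Lemma~\ref{lemK}) for $\int_{[n]}\G$ by constructing a retraction $r$ onto $\G_0$ together with a transformation $\iota_0 r\Rightarrow 1$, then use the hypothesis plus $2$-out-of-$3$ to show each $\BB\bar\sigma$ is a homotopy equivalence, and finish with Lemma~\ref{simlem}. Your organization of the $\bar\sigma$ step---showing $\BB\iota_i$ is a homotopy equivalence for every vertex $i$ via $r\circ\iota_i\cong(0,i)^*$, then using $\bar\sigma\circ\iota_0\cong\iota_{\sigma 0}$---is a mild repackaging of the paper's homotopy-commutative square $J\circ\z_{0,\sigma 0}^*\cong_\theta\bar\sigma\circ J$; both amount to the same $2$-out-of-$3$ argument.

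One point needs correction. The inclusion $\F_a\hookrightarrow\int_{[0]}\F a$ is \emph{not} a biequivalence in general: on hom-categories it is, up to the canonical identification of $2$-cells, post-composition with $\chi_a y\colon y\to 1_a^*y$, and this is an equivalence of categories only when $\chi_a$ is a pseudo-equivalence (the pseudo-bidiagram case), not for an arbitrary lax bidiagram. What you actually need---and what the paper uses---is only that $\BB$ of this inclusion is a homotopy equivalence, which is precisely the $n=0$ instance of your sublemma. So replace ``the first arrow being a biequivalence'' by an appeal to the sublemma, and the identification of the corner $|(\Delta P)^{-1}(a)|$ with $\BB\F_a$ (and of the top map with $\BB J$) goes through exactly as in the paper's final decomposition of $(\ref{hsqubicat})$ into two squares.
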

\begin{proof} This is divided into four parts.

\vspace{0.2cm} \noindent{\em Part 1.} Here we exhibit the embedding
homomorphism in the square $(\ref{squbicat1})$
\begin{equation}\label{emj}\xymatrix@C=18pt{J\!=\!J(\F,a):\ \F_a\ar[r]& \int_\B\F .}\end{equation}

It is defined on cells of $\F_a$ by
$$
\xymatrix{x\ar@/^0.8pc/[r]^u\ar@/_0.8pc/[r]_v
            \ar@{}[r]|{\Downarrow \phi}
                        &y}\
\overset{J}\mapsto \
\xymatrix@C=30pt{(x,a)\ar@/^1pc/[r]^{(\chi y\circ u,1_a)}
                              \ar@/_1pc/[r]_{(\chi y\circ v,1_a)}
                              \ar@{}[r]|{\Downarrow (\tilde{\phi},1)}
                        & (y,a)}
$$
where $\chi y\circ u$ is the 1-cell of $\F_a$ composite of
$\xymatrix@C=16pt{x\ar[r]^{u}&y\ar[r]^-{\chi_{_a}y}&1_a^*y}$,
$1=1_{1_a}$, the identity 2-cell in $\B$ of the identity 1-cell of
$a$, and $\tilde{\phi}$ is the $2$-cell given by the pasting in the
diagram below.
$$
\xymatrix{\ar@{}@<-22pt>[dd]|{\textstyle \tilde{\phi}:}&&1_a^*y\ar@/^0.7pc/[dd]^{1_{1_a}^*y}\ar@/_0.7pc/[dd]\ar@{}@<-10pt>[dd]|(.4){1}
                  \ar@{}[dd]|(.41){\xi}|(.5){\cong}\\
          x\ar@/^0.7pc/[r]^u\ar@/_0.7pc/[r]_v\ar@{}[r]|{\Downarrow\phi}
                      &y\ar[ru]^{\chi y}\ar[rd]_{\chi y}
                        \ar@{}[r]|{\cong}
                        \ar@{}@<7pt>[r]|{\bl}
                        &\\
                    &&1_a^*y}
$$
For $\xymatrix@C=12pt{x\ar[r]^u&y\ar[r]^v&z}$, two composable
1-cells in $\F_a$, the corresponding constraint $2$-cell for their
composition is $(\widehat{J},\bl):Jv\circ Ju\cong J(v\circ u)$,
where $\bl=\bl_{1_a}:1_a\circ 1_a\cong 1_a$, while
$\widehat{J}=\widehat{J}_{v,u}$ is the $2$-cell of $\F_a$ given by
pasting the diagram
$$
\xymatrix@C=20pt@R=16pt{x
\ar@{}@<-28pt>[dd]|{\textstyle\widehat{J}_{v,u}:}
\ar[r]^u\ar[rdd]_{v\circ u}\ar@{}@<22pt>[d]|(.7){=}
                  & y\ar[r]^{\chi y}\ar[dd]^v
                                    &1^*_ay\ar[rr]^{1^*_a(\chi z\circ v)}\ar[rd]
                                    \ar@{}@<-7pt>[rd]|{1^*_av}
                                           \ar@{}@<-10pt>[dd]|(.45){\cong}|(.35){\widehat{\chi}}
                                    &\ar@{}[d]|{\cong}
                                    &1^*_a1^*_az\ar[r]^{\chi z}
                                    \ar@{}@<20pt>[dd]|(.45){\cong}|(.35){\gamma}
                                    &(1_a\circ 1_a)^*z\ar[dd]^{\bl^*z}\\
                                &&&1^*_az\ar[rrd]^1\ar[ru]_{1_a^*\chi z}
                                         \ar@{}[d]|(.4){\bl}|(.6){\cong}&&\\
                                &z\ar[rru]^{\chi z}\ar[rrrr]^{\chi z}&
                                  &&&1^*_az,}
$$
and, for any object $x$ of $\F_a$,  the structure isomorphism for
its identity is $(\widehat{J},1):1_{Jx}\cong J(1_x)$, where
$1=1_{1_a}$, and  $\widehat{J}=\widehat{J}_{x}$ is provided by
pasting the diagram in $\F_a$ below.
$$
\xymatrix@C=3pc@R=3pc{x
\ar@{}@<17pt>[d]|{\cong}
\ar[r]^{\chi x}\ar[d]_{1_x}\ar@{}@<-35pt>[d]|{\textstyle
\widehat{J}_x:}
            &1_a^*x\ar[d]^{1_{1_a}^*x}\ar@/_1.5pc/[d]\ar@{}@<-20pt>[d]|(.4){1}\ar@{}@<-8pt>[d]|(.37){\xi}|(.5){\cong}
                               \\
                    x\ar[r]^{\chi x}
                      &1^*_ax}
$$

So defined, it is straightforward to verify that $J$ is functorial
on vertical composition of 2-cells in $\F_a$. The naturality of the
structure 2-cells $Jv\circ Ju\cong J(v\circ u)$ follows from the
coherence conditions in (\textbf{C1}) and (\textbf{C2}), whereas the
hexagon coherence condition for them is verified thanks to
conditions (\textbf{C1}), (\textbf{C2}), and (\textbf{C7}), and the
result in Lemma \ref{gdo}$(ii)$ relating $\gamma$ with $\omega$. As
for the other two coherence conditions, one amounts to the equality
in Lemma \ref{gdo}$(i)$, and the other is easily checked.

\vspace{0.2cm} \noindent{\em Part 2.} Let $\z:[n]\to \B$ be any
given geometric $n$-simplex of the bicategory, $n\geq 0$. Then, as
in $(\ref{f*})$, we have a composite lax bidiagram of bicategories
$\F\z:[n]\to \Bicat$. In this part of the proof, we prove that  the
homomorphism
$$
\xymatrix@C=18pt{J\!=\!J(\F\z,0)\!:\ \F_{\z_0}\ar[r]& \int_{[n]}\F\z}
$$
induces a homotopy equivalence on classifying spaces:
\begin{equation}\label{bemjn}\xymatrix{\BB
J\!:\BB\F_{\z_0}\,\simeq\,
\BB\! \int_{[n]}\F\z.}\end{equation} This is a direct consequence
of the following general observation:
\begin{lemma}\label{lemK} Suppose $\C$ is a small category with an initial
object $0$, and let us regard $\C$ as a bicategory whose $2$-cells
are all identities. Then, for any lax bidiagram of bicategories
$\LL:\C^{\mathrm{op}}\to\Bicat$,  the homomorphism
$J\!=\!J(\LL,0):\LL_0\to \int_{\C}\!\LL$ induces a homotopy
equivalence  on classifying spaces, $\BB J\!:\BB\LL_0\simeq
\BB\!\int_{\C}\LL$.
\end{lemma}
\begin{proof}
For any object $a\in\Ob\C$, let $0_a:0\to a$ denote the unique
morphism in $\C$ from the initial object to $a$. There is a
homomorphism
$$\xymatrix{K=K(\LL,0):\int_\C\LL\to
\LL_0,}$$ which carries any object $(x,a)$  to
$K(x,a)=0_a^*x$, the image of $x$ by the homomorphism
$0_a^*:\LL_a\to \LL_0$, a 1-cell $(u,f):(x,a)\to (y,b)$ to
$$K(u,f)=\big(\xymatrix@C=35pt{0_a^*x\ar[r]^{0_a^*u}&0_a^*f^*y\ar[r]^-{\chi_{_{f,0_a}}\!y}&
(f\circ 0_a)^*y=0_b^*y}\big),
$$
and a 2-cell $ \xymatrix@C=30pt{(x,a)\ar@{}[r]|{\Downarrow
(\phi,1)}\ar@/^0.8pc/[r]^{(u,f)}\ar@/_0.8pc/[r]_{(v,f)}&(y,b)} $ to
the 2-cell $K(\phi,1):K(u,f)\Rightarrow K(v,f)$ obtained by pasting
the diagram below, where
$(A)=\big(\xymatrix@C=18pt{1_{0^*_af^*y}\ar@2[r]^{\widehat{0^*_a}}&
0_a^*1_{f^*y}\ar@2[r]^{0_a^*\xi}&0_a^*1_f^*y}\big)$.
$$
\xymatrix{\ar@{}@<-45pt>[d]|{\textstyle K(\phi,1):}
&0_a^*f^*y\ar[rrd]^{\chi y}\ar[dd]|(.3){0_a^*1_f^*y}\ar@/^20pt/[dd]^{1}
\ar@{}@<10pt>[dd]|(.60){\cong}|(.52){(A)}
&\ar@{}[dd]|(.55){\cong}|(.45){\br}\\
0_a^*x\ar[ru]^{0_a^*u}\ar[rd]_{0_a^*v}\ar@{}[r]|(.55){\Downarrow\, 0_a^*\phi}
&&&0_b^*y\\
&0_a^*f^*y\ar[rru]_{\chi y}&}
$$
For each object $(x,a)$ of $\int_{\C}\LL$, the structure isomorphism
$\widehat{K}:1_{K(x,a)}\cong K1_{(x,a)}$ is
$$
\xymatrix{\ar@{}@<-45pt>[d]|(.3){\textstyle \widehat{K}:}
0_a^*x \ar[rr]^{0_a^*\chi x}\ar[dd]_1
\ar@{}@<27pt>[dd]|(.35){\cong}|(.25){\gamma}
&
\ar@{}@<20pt>[dd]|(.65){\cong}|(.57){\bl}
&0_a^*1_a^*x
\ar@/^3pc/[lldd]^{\chi x}\ar[ld]_{\chi x}\\
&0_a^*x\ar@/^10pt/[ld]^1\ar@/_10pt/[ld]|{1_{0_a}^*\!x}
\ar@{}@<-24pt>[d]|(.53){\cong\xi}
& \\
0_a^*x&&
}
$$
while the constraint
 $\widehat{K}:K(v,g)\circ K(u,f)\cong
K((v,g)\circ (u,f))$,  for each pair of composable 1-cells
$\xymatrix@C=1pc{(x,a)\ar[r]^{(u,f)} & (y,b) \ar[r]^{(v,g)} &(z,c)}$
of $\int_{\C}\LL$, is given by pasting in $\LL_0$ the diagram below.
$$
\xymatrix{
\ar@{}@<-35pt>[dd]|(.3){\textstyle \widehat{K}:}
0_a^*f^*y\ar[r]^{\chi y}\ar@{}@<20pt>[dd]|{=}&0_b^*y\ar@{}@<25pt>[d]|(.6){\cong}|(.4){\widehat{\chi}}
\ar[r]^{0_b^*v}&0_b^*g^*z\ar[rr]^{\chi z}
\ar@{}@<40pt>[dd]|(.42){\cong}|(.35){\omega}&\ar@{}@<32pt>[d]|(.60){\cong}|(.45){\xi} &0_c^*z\\
&0_a^*f^*y\ar[u]^{\chi y}\ar[r]_{0_a^*f^*v}\ar@{}@<25pt>[d]|{\cong}&
0_a^*f^*g^*z\ar[rd]_{0_a^*\chi z}
\ar[u]^{\chi g^*z}&0_c^*z\ar@/^10pt/[ru]|{1_{0_c}^*z}\ar@/_10pt/[ru]_1
\ar@{}@<32pt>[d]|(.40){\cong}|(.25){\bl}&\\
0_a^*x\ar[uu]^{0_a^*u}\ar[ru]_{0_a^*u}\ar[rrr]^{0_a^*(\chi z\circ (f^*v\circ u))}
&&&0_a^*(g\circ f)^*z\ar[u]_{\chi z}\ar@/_2pc/[ruu]_{\chi z}&}
$$

There are also two pseudo transformations $$\varepsilon: JK
\Rightarrow 1_{\int_{\C}\!\LL},\hspace{0.2cm}\eta: 1_{\LL_0}
\Rightarrow KJ,$$ which are defined as follows: The component of
$\varepsilon$ at an object $(x,a)$ of $\int_\C\LL$ is
$$
\varepsilon(x,a)=(1_{0_a^*x},0_a):(0_a^*x,0)\to (x,a),
$$
and its naturality component at a morphism $(u,f):(x,a)\to (y,b)$ is
$$
(\widehat{\varepsilon},1): \varepsilon(y,b)\circ JK(u,f)\cong (u,f)\circ \varepsilon (x,a),
$$
where $\widehat{\varepsilon}$ is the 2-cell of $\LL_0$  pasted of
the diagram below.
$$
\xymatrix{
0_a^*x\ar[dd]_1\ar[r]^{0_a^*u}&0_a^*f^*y\ar[r]^{\chi y}&0_b^*y\ar[r]^{\chi 0_b^*y}
\ar[rrrdd]_{1}
&1_0^*0_b^*y\ar[rr]^{1_0^*1_{0_b^*y}}_{\cong}\ar@/_12pt/[rr]|1
\ar@{}@<8pt>[dd]|(.2){\delta}|(.3){\cong}
\ar[rrd]_{\chi y}&&1_0^*0_b^*y\ar[d]^{\chi y}\ar@{}@<-12pt>[d]|(.55){\cong}|(.40){\br}\\
\ar@{}[rrrrr]|(.4){\cong}&&&&&0_b^*y\ar[d]\ar@{}@<-2pt>[d]^{1_{0_b}^*\!y}
\\
0_a^*x\ar[r]^{0_a^*u}&0_a^*f^*y\ar[rrrr]^{\chi y}&&&&0_b^*y
}
$$
The pseudo transformation $\eta: 1 \Rightarrow KJ$  assigns to each
object $x$ of the bicategory $\LL_0$ the 1-cell $ \eta x=\chi x:
x\to 1_0^*x$, while its naturality isomorphism at any 1-cell $u:x\to
y$,
$$\widehat{\eta}: \eta y\circ u \,\cong\,KJ(u)\circ \eta x,$$
 is obtained by pasting the diagram below.
$$
\xymatrix@C=4pc@R=25pt{x \ar[rrr]^u \ar[ddd]_{\chi x} \ar@{}@<35pt>[d]|(.43){\widehat{\chi}}|(.6){\cong}
       &&& y \ar[ddd]^{\chi y} \ar[lld]_{\chi y} \ar@{}@<-35pt>[d]|(.76){\bl}|(.9){\cong}
          \\
      & 1^*_0y \ar[dd]^{1^*_0 \chi y}  \ar@/^3pc/[rrdd]^1 \ar@{}@<35pt>[d]|(.43){\gamma}|(.6){\cong} & &
      \\
      &&
      1^*_0y \ar@/^1pc/[rd]^(.3){1^*_0y} \ar@/_1pc/[rd]|1 \ar@{}@<35pt>[d]|(.43){{\xi}}|(.6){\cong} \ar@{}[d]|{\cong}&
      \\
      1^*_0x \ar[r]_{1^*_0(\chi y \circ u)} \ar[ruu]^{1^*_0u} &
      1_0^*1_0^*y \ar[rr]_{\chi y} \ar[ru]^{\chi y} \ar@<-10pt>@{}[ul]|(.4){\cong} &&
      1^*_0y
}
$$

Hence, by Lemma \ref{trans}, there are induced homotopies
$\BB\varepsilon:\BB J\,\BB K =\BB(JK)\simeq \BB
1_{\int_{\C}\!\LL}=1_{\BB\!\int_{\C}\LL}$ and $\BB\eta:
1_{\BB\LL_0}= \BB 1_{\LL_0}\simeq \BB(KJ)=\BB K\,\BB J $, and it
follows that both maps $\BB J$ and $\BB K$ are actually  homotopy
equivalences.\end{proof}

\vspace{0.2cm} \noindent{\em Part 3.}  Let $\sigma:[m]\to [n]$ be a
map in the simplicial category. By Lemma \ref{lesqld}, for any
geometric $n$-simplex $\z:[n]\to \B$ of the bicategory $\B$, we have
the square
$$
\xymatrix{
\int_{[m]}\F\z\sigma\ar[r]^{\bar{\sigma}}\ar[d]_{P}&\int_{[n]}\F\z\ar[d]^{P}\\
[m]\ar[r]^{\sigma}&[n]
}$$
which is cartesian in the category of bicategories and lax functors.
This  part has the goal of proving that the lax functor
$\bar{\sigma}$ induces a homotopy equivalence on classifying spaces:
\begin{equation}\label{bemjnt}\xymatrix{\BB \bar{\sigma}\!:\
\BB\!\int_{[m]}\F\z\sigma\,\simeq\,
\BB\! \int_{[n]}\F\z.}\end{equation}

To do that,  let us consider the square of lax functors
$$
\xymatrix@C=55pt{
\F_{\z_{\sigma 0}}\ar[d]_{\z_{0,\sigma 0}^*}\ar[r]^-{J=J(\F\z\sigma,0)}
&\int_{[m]}\F\z\sigma\ar[d]^{\bar{\sigma}}
\\
\F_{\z_0}\ar[r]^-{J=J(\F\z,0)}&\int_{[n]}\F\z,
}
$$
where $\z_{0,\sigma 0}^*$ is the homomorphism attached by the lax
diagram $\F:\B^{\mathrm{op}}\to \Bicat$ to the 1-cell $\z_{0,\sigma
0}:\z_0\to\z_{\sigma 0}$ of $\B$, and the homomorphisms $J$ are
defined as in $(\ref{emj})$. This square is not commutative, but
there is a pseudo transformation $\theta: J\z_{0,\sigma
0}^*\Rightarrow \bar{\sigma}J$, whose component at any object $x$ of
$\F_{\z\sigma 0}$ is the 1-cell of $\int_{[n]}\F\z$
$$
\theta x=(1_{\z_{0,\sigma 0}^*x},(0,\sigma 0)):\,(\z_{0,\sigma 0}^*x,0)\to (x,\sigma 0),
$$
and whose naturality isomorphism, at any 1-cell $u:x\to y$
 in $\F_{\z\sigma 0}$, is
$$\widehat{\theta}_u=(\tilde{\theta},1_{\z_{0,\sigma 0}}):\theta y\circ J\z_{0,\sigma 0}^* u\cong \bar{\sigma} J u\circ \theta x,$$
where $\tilde{\theta}$ is given by pasting in $\F_{\z_0}$ the
diagram below.
$$
\xymatrix@C=50pt{
\z_{0,\sigma 0}^*x\ar[r]^{\z_{0,\sigma 0}^*u}\ar[ddd]_{1}
       \ar@{}@<25pt>[dd]|(.45){\cong}
  &\z_{0,\sigma 0}^*y\ar[r]^{\chi_\z \z_{0,\sigma 0}^*y}
                      \ar[ddd]|{\z_{0,\sigma 0}^*\chi_{\z} y}
                                        \ar@/^14pt/[rrddd]|{1}
                                        \ar@{}@<-25pt>[ddd]|(.65){\cong}
                                        \ar@{}@<50pt>[ddd]|(.5){\cong}|(.45){\gamma}
    &\z_{0,0}^*\z_{0,\sigma 0}^*y\ar[r]^{\z_{0,0}^*1}_{\cong}
                                 \ar@{}@<-13pt>[r]|(.6){1}
                                 \ar@/_10pt/[r]
                                                            \ar[rd]_{\chi_{\z} y}
                                                        \ar@{}@<20pt>[dd]|(.55){\cong}|(.45){\delta}
    &\z_{0,0}^*\z_{0,\sigma 0}^*y\ar[d]^{\chi_{\z} y}
                           \ar@{}@<-15pt>[d]|{\cong}
\\
&&&\z_{0,\sigma 0}^*y\ar[dd]^{1^*_{\z_{0,\sigma0}}y}
\\
&&\z_{0,\sigma0}^*y\ar@/^7pt/[rd]|(.38){1^*_{\z_{0,\sigma0}}y}
                   \ar@/_9pt/[rd]|(.3)1
                                     \ar@{}@<46pt>[d]|(.6){\cong}|(.47){\xi}
                                    \ar@{}[d]|{\cong}
\\
\z_{0,\sigma 0}^*x\ar[r]^-{\z^*_{0,\sigma0}(\chi_{\z}y\circ u)}
                  \ar[ruuu]^{\z_{0,\sigma0}^*u}
  & \z_{0,\sigma0}^*\z_{\sigma0,\sigma 0}^*y\ar[rr]^{\chi_\z y}\ar[ru]^{\chi_\z y}
    &&\z_{0,\sigma0}^*y
}
$$

Hence, by Lemma \ref{trans}, the induced square on classifying
spaces
$$
\xymatrix@C=35pt{
\BB\F_{\z_{\sigma 0}}\ar[d]_{\BB\z_{0,\sigma 0}^*}\ar[r]^-{\BB J}&
\BB\int_{[m]}\F\z\sigma\ar[d]^{\BB\bar{\sigma}}\\
\BB\F_{\z_0}\ar[r]^-{\BB J}&\BB\int_{[n]}\F\z
}
$$
is homotopy commutative. Moreover, both maps $\BB J$ in the square
are homotopy equivalences, as we showed in the proof of Lemma
\ref{lemK} above. Since, by hypothesis, the map $\BB \z_{0,\sigma
0}^*:\BB\F_{\z_{\sigma0}}\to \BB\F_{\z_{0}}$ is also a homotopy
equivalence, it follows that the remaining map in the square have
the same property, that is, the map $\BB\bar{\sigma}:
\xymatrix@C=15pt{ \BB\int_{[m]}\F\z\sigma\ar[r]& \BB\int_{[n]}\F\z}
$ is a homotopy equivalence.

\vspace{0.2cm} \noindent{\em Part 4.} Finally, we are ready to
complete here the proof of the theorem as follows: Let us consider
the induced simplicial map on geometric nerves $\Delta
P:\Delta\int_\B\F\to\Delta\B$. This verifies the hypothesis in Lemma
\ref{simlem}. In effect, thanks to Lemma \ref{lesqld}, for any
geometric $n$-simplex of $\B$, $\z:[n]\to \B$, the square
$$
\xymatrix{\int_{[n]}\F\z\ar[r]^{\bar{\z}}\ar[d]_{P} & \int_\B\F
\ar[d]^{P}\\
          [n]\ar[r]^\z&\B}
$$
is a pullback in the category of bicategories and lax functors,
whence the square induced by taking geometric nerves
$$
\xymatrix{\Delta\int_{[n]}\F \z\ar[r]^{\Delta\bar{\z}}\ar[d]_{\Delta P} &
\Delta\int_\B\F \ar[d]^{\Delta P}\\
          \Delta[n]\ar[r]^{\Delta\z}&\Delta\B}
$$
is a pullback in the category of simplicial sets. Thus,
$\xymatrix{\Delta P^{-1}(\Delta\z)=\Delta\int_{[n]}\F\z}$.

Furthermore, for any map $\sigma:[m]\to [n]$ in the simplicial
category, since the diagram of lax functors
$$
\xymatrix@R=10pt{\int_{[m]}\F\z\sigma\ar[rd]_{\bar{\sigma}}\ar[rrd]^{\overline{\z\sigma}}\ar[dd]_{P}&& \\
&\int_{[n]}\F\z\ar[dd]^{P}\ar[r]_{\bar{\z}} &\int_{\B}\F \ar[dd]^{P}\\
 [m]\ar[rd]_{\sigma}\ar[rrd]^(.35){\z\sigma}|!{[ru];[rd]}\hole&& \\
 & [n]\ar[r]_{\z}&\B}
$$
is commutative, the induced diagram of simplicial maps
$$
\xymatrix@R=10pt{\Delta
\int_{[m]}\F\z\sigma\ar[rd]_{\Delta\bar{\sigma}}\ar[rrd]^{\Delta\overline{\z\sigma}}\ar[dd]_{\Delta P}&& \\
&\Delta \int_{[n]}\F\z\ar[dd]^{\Delta P}\ar[r]_{\Delta \bar{\z}} &\Delta \int_{\B}\F \ar[dd]^{\Delta P}\\
 \Delta [m]\ar[rd]_{\Delta \sigma}\ar[rrd]^(.35){\Delta (\z\sigma)}|!{[ru];[rd]}\hole&& \\
 & \Delta [n]\ar[r]_{\Delta \z}&\Delta \B}
$$
is also commutative. Then, as $\sigma^*\z=\z\sigma$, the induced
simplicial map $\Delta P^{-1}(\sigma^*\z)\to \Delta P^{-1}(\z)$ is
precisely the map $\Delta\bar{\sigma}:\Delta \int_{[m]}\F\z\sigma\to
\Delta \int_{[n]}\F\z$, whose induced map on geometric realizations
is the homotopy equivalence $(\ref{bemjnt})$, $\xymatrix@C=15pt{\BB
\bar{\sigma}\!:\ \BB\!\int_{[m]}\F\z\sigma\, \simeq \, \BB\!
\int_{[n]}\F\z}$.

Hence, by Lemma \ref{simlem}, for each object $a\in\Ob\B$, the
square
$$\xymatrix{|\Delta P^{-1}(a)|\ar[r]\ar[d]&
|\Delta\int_\B\F| \ar[d]^{|\Delta P|}\ar@{}@<34pt>[d]|{\textstyle =}\\
          pt\ar[r]^-{|\Delta a|}&|\Delta \B|}
          \hspace{0.2cm}
\xymatrix{\BB\int_{[0]}\F a\ar[r]^{\BB\bar{a}}\ar[d] &
\BB\int_\B\F \ar[d]^{\BB P}\\
          \BB\Delta[0]\ar[r]^-{\BB a}&\BB\B}
$$
is homotopy cartesian. Furthermore, since the diagram of lax
functors
$$
\xymatrix@R=10pt@C=30pt{\F a\ar[rd]_{J(\F a,0)}\ar[rrd]^{J(\F,a)}\ar[dd]&& \\
&\int_{[0]}\F a\ar[dd]\ar[r]_{\bar{a}} &\int_{\B}\F \ar[dd]^{P}\\
 [0]\ar[rd]\ar[rrd]^(.35){a}|!{[ru];[rd]}\hole&& \\
 & [0]\ar[r]_{a}&\B}
$$
commutes, it follows that the square $(\ref{hsqubicat})$ is homotopy
cartesian square $(\ref{hsqubicat})$ as it is the composite of the
squares
$$
\xymatrix@C=40pt{
\BB\F_a\ar[r]^{\BB J(\F a,0)}\ar[d]&\BB\!\int_{[0]}\F a\ar[r]^{\BB \bar{a}}\ar[d]&\BB\!\int_\B\F\ar[d]^{\BB P}
\\ pt\ar[r]&pt \ar[r]^{\BB a}&\BB\B
}
$$
where  the map $\BB J(\F a,0)\!:\BB\F_a \simeq\BB\!\int_{[0]}\F a$
in the left square is one of the homotopy equivalences
$(\ref{bemjn})$, while the square on the right is homotopy
cartesian. \end{proof}

\section{The homotopy cartesian square induced by a lax functor}\label{theB}
In this section we prove the main theorem of this paper, that is, a
generalization to lax functors  (monoidal functors, for instance) of
the well-known Quillen's Theorem B \cite{quillen}. We shall first
extend Gray's construction \cite[Section 3.1]{gray2} of homotopy
fiber 2-categories to {\em homotopy fiber bicategories} of an
arbitrary lax functor between bicategories, so we can state the
corresponding `Theorem B' in terms of them.

Let $F:\A\to \B$ be any given lax functor between bicategories. As
in Example \ref{exap2}, each object $b$ of $\B$ gives rise to a
pseudo bidiagram of categories
$$
\B(-,b):\B^{\mathrm{op}}\to \Cat,
$$
which carries an object $x\in\Ob\B$ to the hom-category $\B(x,b)$,
and then also to the lax bidiagram of categories
\begin{equation}\label{cF-}
\B(-,b)F\!:\A^{\mathrm{op}}\to \Cat,
\end{equation}
obtained,  as in $(\ref{f*})$, by composing $\B(-,b)$ with $F$. The
Grothendieck construction on these lax bidiagrams leads to the
notions of {\em homotopy fiber} and {\em comma bicategories}:

\begin{definition} The {\em homotopy fiber}, $F\!\!\downarrow{\!_b}$,
 of a lax functor between bicategories $F:\A\to \B$ over an object $b\in\Ob\B$,
is the bicategory obtained as the Grothendieck construction on the
lax bidiagram $(\ref{cF-})$, that is,
$$
\xymatrix{F\!\!\downarrow\!_b=\int_\A\B(-,b)F.}
$$
In particular, when $F=1_\B$ is the identity functor on $\B$,
$$
\xymatrix{\B\!\!\downarrow{\!_b}=\int_\B\B(-,b)}
$$
is the {\em comma bicategory} of objects over $b$ of the bicategory
$\B$.
\end{definition}

It will be useful to develop here the Grothendieck construction,
exposed in Section \ref{gt}, in this particular case. Its objects
are pairs
\begin{equation}\label{obcomf} (f\!:Fa\to b,a)\end{equation}
with $a$ a 0-cell of $\A$ and $f$ a 1-cell of $\B$ whose source is
$Fa$ and target the fixed object $b$. The 1-cells
\begin{equation}\label{1celcomf}(\beta,u):(f,a)\to (f',a')\end{equation} consist of a $1$-cell $u:a\to a'$
in $\A$, together with a $2$-cell $\beta\!:f\Rightarrow f'\circ Fu$
in the bicategory $\B$,
$$
\xymatrix@C=10pt@R=14pt{ Fa\ar[rr]^{Fu}\ar[rd]_{f}& \ar@{}[d]|(.27){\beta}|(.48){\Rightarrow}&Fa'\ar[ld]^{f'}\\
 &b& }
$$
A 2-cell in $F\!\!\downarrow{\!_b}$,
\begin{equation}\label{2cellcommf}\xymatrix@C=30pt{(f,a)\ar@/^1pc/[r]^{(\beta,u)}
\ar@/_1pc/[r]_{(\beta',u')}
\ar@{}[r]|{\Downarrow\alpha}& (f',a')},
\end{equation}
 is a 2-cell $\alpha:u\Rightarrow u'$ in $\A$, such that the equation below holds in the category $\B(Fa,b)$.
\begin{equation}\label{1102}
\xymatrix@C=20pt@R=15pt{Fa\ar[rr]^{Fu'}_{\Uparrow F\alpha}
\ar[rdd]_{f}
\ar@/_1.1pc/[rr]
\ar@{}@<-17pt>[rr]|{Fu}&
&Fa'\ar[ldd]^{f'}\ar@{}@<24pt>[dd]|{\textstyle =}\\
&& \\
&b\ar@{}[uu]|(.35){\Rightarrow}|(.46){\beta}&
}
\hspace{0.3cm}
\xymatrix@C=20pt@R=15pt{Fa\ar[rr]^{Fu'}
\ar[rdd]_{f}
&
&Fa'\ar[ldd]^{f'}\\
&& \\
&b\ar@{}[uu]|(.55){\Rightarrow}|(.7){\beta'}&
}
\end{equation}

 Compositions, identities, and
the structure associativity and unit constraints in
$F\!\!\downarrow{\!_b}$ are as follows: For any given objects
$(f,a)$ and $(f',a')$ as in (\ref{obcomf}), the vertical composition
of 2-cells
$$
\xymatrix@C=45pt{(f,a)\ar[r]|{(\beta',u')}
\ar@/^1.5pc/[r]^{(\beta ,u)}\ar@/_1.5pc/[r]_{(\beta'',u'')}
\ar@{}[r]<10pt>|{\Downarrow\alpha}
\ar@{}[r]<-10pt>|{\Downarrow\alpha'}&(f',a')}\overset{\cdot}\mapsto
\xymatrix@C=40pt{(f,a)\ar@/^1pc/[r]^{(\beta,u)}
\ar@/_1pc/[r]_{(\beta'',u'')}\ar@{}[r]|{\Downarrow\alpha'\cdot\alpha}
&(f',a')}
$$
is given by the vertical composition $\alpha'\cdot\alpha$ of 2-cells
in $\A$.  The horizontal composition of two 1-cells in
$F\!\!\downarrow{\!_b}$,
$$
\xymatrix{(f,a)\ar[r]^{(\beta,u)}&(f',a')\ar[r]^{(\gamma,v)}&(f'',a'')}
$$
is the 1-cell
$$
(\gamma,v)\circ (\beta,u)=(\gamma \circledcirc \beta, v\circ u):
(f,a)\to (f'',a''),
$$
where the second component is the horizontal composition $v \circ u$
in $\A$, while the first one is the 2-cell in $\B$ obtained by
pasting the diagram below.
\begin{equation}\label{oo}\begin{array}{c}
\xymatrix@R=12pt{Fa\ar[r]^{Fu}\ar@/^1.6pc/[rr]^{F(v\circ u)}_{\widehat{F}\Uparrow}\ar[rdd]_{f}
\ar@{}@<32pt>[dd]|(.25){\beta}|(.38){\Rightarrow}
\ar@{}@<-35pt>[dd]|(.25){\textstyle \gamma \circledcirc \beta:}
&Fa'\ar[r]^{Fv}\ar[dd]|{f'}
\ar@{}@<16pt>[dd]|(.25){\gamma}|(.38){\Rightarrow}
&Fa''
\ar[ldd]^{f''}\\
&&\\
&b&
}\end{array}\end{equation}
The horizontal composition of 2-cells is simply given by the
horizontal composition of 2-cells in $\B$,
$$
\xymatrix@C=35pt{ (f,a)\ar@/^1pc/[r]^{(\beta,u)}
\ar@/_0.8pc/[r]_{(\beta',u')}
 \ar@{}[r]|{\Downarrow\alpha}&(f',a')\ar@/^1pc/[r]^{(\gamma,v)}
\ar@/_0.8pc/[r]_{(\gamma',v')}\ar@{}[r]|{\Downarrow \alpha'}&(f'',a'')}
\mapsto \xymatrix@C=40pt{(f,a)\ar@/^1pc/[r]^{(\gamma\circledcirc \beta, v\circ u)}
\ar@/_1pc/[r]_{(\gamma'\circledcirc \beta',v'\circ u')}
\ar@{}[r]|{\Downarrow \alpha'\circ \alpha}&(f'',a'')},
$$
and the  identity 1-cell of each 0-cell $(f:Fa\to b, a)$ is
$$\begin{array}{c}
1_{(f,a)}=(\overset{_\circ}{1}_{(f,a)},1_a ):(f,a)\to (f,a),
\\
\overset{_\circ}{1}_{(f,a)}=\big(\xymatrix{f\ar@2[r]^-{\br^{-1}}& f\circ 1_{Fa}\ar@2[r]^-{1_f\circ
\widehat{F}}&f\circ F(1_a)}\big)
\end{array}
$$

 Finally, the associativity, left and right unit constraints are obtained from those of $\A$
by the formulas
$$
\boldsymbol{a}_{(\beta'',u''),(\beta',u'),(\beta,u)}=\boldsymbol{a}_{u'',u',u},\hspace{0.3cm}
\boldsymbol{r}_{(\beta,u)}=
\boldsymbol{r}_u,\hspace{0.3cm}
\boldsymbol{l}_{(\beta,u)}=\boldsymbol{l}_u.
$$

We shall prove below that, under  reasonable necessary conditions,
the classifying spaces of the homotopy fiber bicategories $\BB
(F\!\!\downarrow{\!_b})$, of a lax functor $F:\A\to \B$,  realize
the homotopy fibers of the induced map on classifying spaces, $\BB
F:\BB\A\to \BB \B$. This fact will justify the name of `homotopy
fiber bicategories' for them. As a first step to do it, we state the
following particular case, when $F=1_\B$ is the identity
homomorphism:

\begin{lemma}\label{cont}
For any object $b$ of a bicategory $\B$, the classifying space of
the comma bicategory $\B\!\!\downarrow{\!_b}$ is contractible, that
is, $\BB(\B\!\!\downarrow{\!_b})\simeq pt$.
\end{lemma}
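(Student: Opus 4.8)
The plan is to deduce contractibility of $\BB(\B\!\!\downarrow{\!_b})$ from the fact that, among its objects, $t=(1_b\!:b\to b,\,b)$ is (bi)terminal. Let $C:\B\!\!\downarrow{\!_b}\to\B\!\!\downarrow{\!_b}$ be the constant homomorphism with value $t$; it factors as $\B\!\!\downarrow{\!_b}\to [0]\to\B\!\!\downarrow{\!_b}$ through the bicategory $[0]$ with a single object and only identity cells, so $\BB C$ factors through $\BB[0]=pt$ and is therefore a constant map. Hence it suffices to produce a lax or oplax transformation between the identity homomorphism of $\B\!\!\downarrow{\!_b}$ and $C$: by Lemma~\ref{trans} this yields a homotopy $1_{\BB(\B\!\!\downarrow{\!_b})}=\BB(1_{\B\!\!\downarrow{\!_b}})\simeq \BB C$, whence $1_{\BB(\B\!\!\downarrow{\!_b})}$ is null-homotopic and $\BB(\B\!\!\downarrow{\!_b})\simeq pt$.

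The required transformation is an \emph{oplax} one $\varepsilon:1_{\B\!\!\downarrow{\!_b}}\Rightarrow C$. Recalling that here $F=1_\B$, so that $Fa=a$, I take its component at an object $(f\!:Fa\to b,a)$ to be the $1$-cell $\varepsilon_{(f,a)}=(\bl_f^{-1},\,f):(f,a)\to (1_b,b)$, obtained by choosing $u=f$ in $(\ref{1celcomf})$ and the left unit isomorphism $\bl_f^{-1}:f\Rightarrow 1_b\circ f$ of $\B$. At a $1$-cell $(\beta,u):(f,a)\to (f',a')$, the naturality $2$-cell $\widehat{\varepsilon}_{(\beta,u)}:\,1_{(1_b,b)}\circ \varepsilon_{(f,a)}\Rightarrow \varepsilon_{(f',a')}\circ(\beta,u)$ is the $2$-cell of $\B\!\!\downarrow{\!_b}$ represented by $\beta\cdot\bl_f:1_b\circ f\Rightarrow f'\circ u$ in $\B$; that this pair satisfies the defining equation $(\ref{1102})$ of a $2$-cell of $\B\!\!\downarrow{\!_b}$ is immediate from the naturality of $\aso$ and $\bl$ together with the triangle identities $(\ref{tri2})$.

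It then remains to verify the oplax transformation axioms for $\varepsilon$. Compatibility with $2$-cells of $\B\!\!\downarrow{\!_b}$ is read off at once from their description in $(\ref{1102})$. Compatibility with horizontal composition of $1$-cells and with identity $1$-cells unwinds, using the definitions $(\ref{oo})$ and of $\overset{_\circ}{1}_{(f,a)}$ and the fact that $\widehat{1_\B}$ is the identity, to equalities of pasting diagrams built solely out of the constraints $\aso,\bl,\br$ of $\B$ and the $2$-cells $\beta$; these hold by the coherence theorem for bicategories (equivalently, by repeated use of the naturality of the constraints and of $(\ref{tri2})$, $(\ref{equide})$). This bookkeeping is the only laborious point of the argument, and I expect it to be the main obstacle; conceptually nothing is at stake, since $t$ is a (bi)terminal object of $\B\!\!\downarrow{\!_b}$ — indeed, for every object $(f,a)$ the hom-category $\B\!\!\downarrow{\!_b}\big((f,a),t\big)$ is isomorphic, via $\bl$, to the coslice category $f\backslash\B(Fa,b)$, which has $(f,1_f)$ as initial object. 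With $\varepsilon$ in hand, Lemma~\ref{trans} completes the proof as explained above.
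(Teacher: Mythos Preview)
Your argument is essentially the paper's own proof: the paper likewise factors the constant endomorphism $\mathrm{Ct}$ through $[0]\to\B\!\!\downarrow{\!_b}$ at $(1_b,b)$, exhibits the oplax transformation $1_{\B\downarrow{_b}}\Rightarrow\mathrm{Ct}$ with component $(\bl_f^{-1},f)$ at $(f,a)$ and naturality $2$-cell $\beta\cdot\bl$, and invokes Lemma~\ref{trans}. Your additional remarks on biterminality and on the coherence checks are correct but not in the paper, which simply asserts the oplax transformation exists.
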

\begin{proof} Let $[0]\to \B\!\!\downarrow{\!_b}$ denote the
normal lax functor that carries $0$ to the object $(1_b,b)$, and let
$\mathrm{Ct}:\B\!\!\downarrow{\!_b}\to\B\!\!\downarrow{\!_b}$ be the
composite of $\B\!\!\downarrow{\!_b}\to [0] \to
\B\!\!\downarrow{\!_b}$. Then, the induced map on classifying spaces
$$\xymatrix@C=20pt{\BB(\B\!\!\downarrow{\!_b})\ar[r]^{\BB\mathrm{Ct}}&\BB(\B\!\!\downarrow{\!_b})}=
\xymatrix@C=15pt{\BB(\B\!\!\downarrow{\!_b})\ar[r]&\BB[0]=pt\ar[r]&\BB(\B\!\!\downarrow{\!_b})}$$
is a constant map. Now, let us observe that there is a canonical
oplax transformation $1_{\B\downarrow{_b}}\Rightarrow \mathrm{Ct}$,
whose component at any object $(f:a\to b,a)$ is the 1-cell
$(\bl^{-1}_f,f):(f,a)\to (1_b,b)$, and whose naturality component at
a 1-cell $(\beta,u):(f,a)\to (f',a')$ is
$$
\xymatrix{(f,a) \ar[r]^{(\bl^{-1},f)} \ar[d]_{(\beta,u)} \ar@{}@<25pt>[d]|(.39){\beta\cdot\bl}|(.51){\Leftarrow}
        & (1_b,b) \ar[d]^{(\br^{-1},1_b)}
      \\
      (f',a') \ar[r]_{(\bl^{-1},f')} &
      (1_b,b).
}
$$
This oplax transformation gives, thanks to Lemma $\ref{trans}$, a
homotopy between
$\BB(1_{\B\downarrow{_b}})=1_{\BB(\B\downarrow{_b})}$ and the
constant map $\BB\mathrm{Ct}$, and so we obtain the result.
\end{proof}

\begin{example}\label{ombi}{\em  Let $\B$ be a bicategory, and suppose $b\in\Ob\B$ is an object such
that the induced maps $\BB p^*:\BB\B(y,b)\to \BB\B(x,b)$ are
homotopy equivalences for the different morphisms $p:x\to y$ in $\B$
(for instance, any object of a bigroupoid). By Theorem \ref{th1}, we
have the fiber sequence $$ \BB \B(b,b)\to \BB
\B\!\!\downarrow{\!_{b}}\to \BB\B$$ in which the space $\BB
\B\!\!\downarrow{\!_{b}}$ is contractible by Lemma \ref{cont}.
Hence, we conclude the existence of a homotopy equivalence
\begin{equation}\label{loo1}
\Omega(\BB \B,\BB b)\simeq \BB(\B(b,b))
\end{equation}
between the loop space of the classifying space of the bicategory
with base point $\BB b$ and the classifying space of the category of
endomorphisms of $b$ in $\B$.

 The homotopy equivalence above is already known when the bicategory is strict,
 that is, when $\B$ is a 2-category. It appears as a main result in the paper by Del
 Hoyo \cite[Theorem 8.5]{Hoyo}, and it was also stated at the same time by Cegarra in
 \cite[Example 4.4]{cegarra}. Indeed, that homotopy equivalence $(\ref{loo1})$, for the case when $\B$ is a 2-category,
can be deduced from a result by Tillman about simplicial categories
in \cite[Lemma 3.3]{Tillmann}.}
\end{example}

Returning to an arbitrary  lax functor $F:\A\to\B$, we shall now pay
attention to two constructions with fiber homotopy bicategories.
First, we have that any 1-cell $p:b\to b'$ in $\B$ determines a
2-functor
\begin{equation}\label{h_*}
p_*\!:\,F\!\!\downarrow{\!_b}\to F\!\!\downarrow{\!_{b'}}
\end{equation}
whose function on objects is defined by
$$
p_*(Fa\overset{f}\to b,a)=(Fa\overset{p\circ f}\longrightarrow b',a).
$$
A 1-cell $(\beta,u):(f,a)\to (f',a')$ of $\B\!\!\downarrow{\!_b}$,
as in $(\ref{1celcomf})$ , is carried  to the 1-cell of
$\B\!\!\downarrow{\!_{b'}}$
$$\begin{array}{c}
p_*(\beta, u)=(p\circledcirc \beta, u):(p\circ f,a)\to(p\circ f',a'),
\\
p\circledcirc \beta=\big( \xymatrix@C=15pt{p\circ
f\ar@2[r]^-{1_p\circ \beta} & p\circ(f'\circ Fu)
\ar@2[r]^{\aso^{-1}}& (p\circ f')\circ Fu}\big)\end{array}$$ while, for
$\alpha:(\beta,u)\Rightarrow (\beta',u')$ any 2-cell in
$\B\!\!\downarrow{\!_b}$ as in $(\ref{2cellcommf})$,
$$
p_*(\alpha)=\alpha:(p\circledcirc \beta, u)\Rightarrow (p\circledcirc \beta', u').
$$
Secondly, by Lemma \ref{lesqld}, we have a pullback square in the
category of bicategories and lax functors  for any  $b\in \Ob \B$
\begin{equation}\label{sq1}
\begin{array}{c}
\xymatrix{F\!\!\downarrow{\!_b}\ar[d]_{P}\ar[r]^{\bar{F}}&\B\!\!\downarrow{\!_b}\ar[d]^{P}
\ar@{}@<25pt>[d]|{\textstyle =}
\\
\A\ar[r]^{F}&\B}\hspace{0.3cm}\xymatrix{
\int_\A \B(-,b)F\ar[d]_{P}\ar[r]^{\bar{F}}&\int_\B\B(-,b)\ar[d]^{P}\\
\A\ar[r]^{F}&\B}\end{array}
\end{equation}
where, recall, the 2-functors $P$ are the canonical projections
$(\ref{proj2fun})$, and $\bar{F}$ is the induced lax functor
$(\ref{ilf})$, which acts on cells by
$$
\xymatrix@C=30pt{(f,a)\ar@/^0.8pc/[r]^{(\beta,u)}
\ar@/_0.8pc/[r]_{(\beta',u')}
\ar@{}[r]|{\Downarrow\alpha}& (f',a')}\  \overset{\bar{F}}\mapsto\
\xymatrix@C=30pt{(f,Fa)\ar@/^0.8pc/[r]^{(\beta,Fu)}
\ar@/_0.8pc/[r]_{(\beta',Fu')}
\ar@{}[r]|{\Downarrow F\!\alpha}& (f',Fa'),}
$$
and whose structure constraints are canonically given by those of
$F$.

We are now ready to state and prove the following theorem, which is
just the well-known Quillen's Theorem B \cite{quillen} when the lax
functor $F$ in the hypothesis is an ordinary functor between
categories. The result therein also generalizes a similar result by
Cegarra \cite[Theorem 3.2]{cegarra}, which was stated for the case
when $F$ is a 2-functor between 2-categories, but the extension to
arbitrary lax functors between bicategories is highly nontrivial and
the proof we give here uses different tools.

\begin{theorem}\label{B} Let $F:\A\to\B$ be a lax functor between bicategories.  The following statements
are equivalent:

$(i)$ For every 1-cell $p:b\to b'$ in $\B$, the induced map $\BB
p_*:\BB(F\!\!\downarrow{\!_b})\to \BB(F\!\!\downarrow{\!_{b'}})$ is
a homotopy equivalence.

$(ii)$ For every object $b$ of $\B$, the induced square by
$(\ref{sq1})$ on classifying spaces
\begin{equation}\label{hs2}
\begin{array}{c}
\xymatrix{\BB(F\!\!\downarrow{\!_b})\ar[d]_{\BB P}\ar[r]^{\BB\bar{F}}&\BB(\B\!\!\downarrow{\!_b})
\ar[d]^{\BB P}
\\
\BB\A\ar[r]^{\BB F}&\BB\B}\end{array}
\end{equation}
is homotopy cartesian.

Therefore, in such a case, for each object $a\in\Ob\A$ such that
$Fa=b$, there is a homotopy fibre sequence
$$\BB(F\!\!\downarrow{\!_b})\to \BB \A\to\BB\B,$$ relative to the
base 0-cells $\BB a$ of $\BB \A$, $\BB b$ of $\BB \B$ and $\BB
(1_b,a)$ of $\BB(F\!\!\downarrow{\!_b})$, that induces a long exact
sequence on homotopy groups
$$\cdots \to \pi_{n+1}\BB\B\to\pi_n\BB(F\!\!\downarrow{\!_b})\to\pi_n\BB\A\to\pi_n\BB\B\to\cdots.$$
\end{theorem}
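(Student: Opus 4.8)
The plan is to deduce Theorem \ref{B} from Theorem \ref{th1}, applied to a single covariant oplax bidiagram of bicategories over $\B$, in the sense of Remark \ref{reotlax}, built out of the homotopy fiber bicategories of $F$. First I would promote the assignments $b\mapsto F\!\!\downarrow{\!_b}$ and, for $p:b\to b'$, $p\mapsto p_*$ (the $2$-functor $(\ref{h_*})$), to an oplax bidiagram $F\!\!\downarrow\,:\B\to\Bicat$. Its remaining data are: to a $2$-cell $\sigma:p\Rightarrow q$ the pseudo transformation $\sigma_*:p_*\Rightarrow q_*$ whose component at $(f:Fa\to b,a)$ is the $1$-cell of $F\!\!\downarrow{\!_{b'}}$ determined by $\sigma\circ 1_f$ and the unit constraints of $\B$ and $F$; to composable $1$-cells $p,p'$ the pseudo equivalence $\chi_{p',p}:(p'\circ p)_*\Rightarrow p'_*p_*$, given componentwise by the associativity constraint of $\B$; to each object $b$ the pseudo equivalence $\chi_b:(1_b)_*\Rightarrow 1_{F\!\!\downarrow{\!_b}}$, given by the left unit constraint; and the coherence modifications $\xi,\omega,\gamma,\delta$, which are the ones canonically induced by the coherence isomorphisms of $\A$ and $\B$ together with the structure $2$-cells of $F$. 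Checking that all this satisfies the (oplax analogues of the) coherence conditions $(\mathbf{C1})$--$(\mathbf{C8})$ is the technical heart of the proof: a long but purely formal diagram chase, organised with the coherence theorem for bicategories to handle bracketings. By construction $F\!\!\downarrow$ assigns to a $1$-cell $p$ of $\B$ the $2$-functor $p_*$, so hypothesis $(i)$ is exactly the hypothesis of Theorem \ref{th1} for $F\!\!\downarrow$.

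Next I would analyse the bicategory $\int_\B F\!\!\downarrow$. I would write down a homomorphism $\Phi:\int_\B F\!\!\downarrow\,\to\A$ sending an object $((f:Fa\to b,a),b)$ to $a$ and a $1$-cell $((\beta,u),p)$ to $u:a\to a'$, with structure $2$-cells inherited from those of $F$; it has a section $S:\A\to\int_\B F\!\!\downarrow$ with $Sa=((1_{Fa},a),Fa)$ and $\Phi S\cong 1_\A$, and there is an oplax transformation $S\Phi\Rightarrow 1_{\int_\B F\!\!\downarrow}$ whose component at $((f,a),b)$ is the $1$-cell built from $f:Fa\to b$ and the unit constraints. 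By Lemma \ref{trans} this makes $\BB\Phi:\BB\!\int_\B F\!\!\downarrow\,\simeq\BB\A$ a homotopy equivalence with homotopy inverse $\BB S$. Moreover the projection $2$-functor $P:\int_\B F\!\!\downarrow\,\to\B$ of $(\ref{proj2fun})$ sends $((f,a),b)$ to $b$, and there is an oplax transformation $F\Phi\Rightarrow P$ whose component at $((f,a),b)$ is the $1$-cell $f$ and whose naturality $2$-cell at $((\beta,u),p)$ is $\beta$; hence, again by Lemma \ref{trans}, $\BB P\simeq\BB F\circ\BB\Phi$ as maps $\BB\!\int_\B F\!\!\downarrow\,\to\BB\B$. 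Finally, unwinding the embedding $J:F\!\!\downarrow{\!_b}\to\int_\B F\!\!\downarrow$ of Theorem \ref{th1} one checks that $\Phi J$ is isomorphic to the projection $F\!\!\downarrow{\!_b}\to\A$ occurring as the left vertical map of $(\ref{sq1})$.

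Assume $(i)$. Applying Theorem \ref{th1}, in the covariant oplax form of Remark \ref{reotlax}, to $F\!\!\downarrow\,:\B\to\Bicat$ yields, for each object $b$, a homotopy cartesian square with corners $\BB(F\!\!\downarrow{\!_b})$, $\BB\!\int_\B F\!\!\downarrow$, $pt$ and $\BB\B$, whose horizontal maps are $\BB J$ and $\BB b$ and whose right vertical map is $\BB P$. Transporting along the homotopy equivalence $\BB\Phi$, replacing $\BB P$ by $\BB F$ via the transformation $F\Phi\Rightarrow P$, and using the identification $\Phi J\cong (F\!\!\downarrow{\!_b}\to\A)$, we obtain that the square with corners $\BB(F\!\!\downarrow{\!_b})$, $\BB\A$, $pt$, $\BB\B$ and maps the projection $\BB(F\!\!\downarrow{\!_b})\to\BB\A$, $\BB F$ and $\BB b$ is homotopy cartesian. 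Since $\BB(\B\!\!\downarrow{\!_b})$ is contractible by Lemma \ref{cont} and its point is carried to $\BB b$, this is exactly the assertion that the square $(\ref{hs2})$ is homotopy cartesian; thus $(ii)$ holds.

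Conversely, assume $(ii)$ and fix a $1$-cell $p:b\to b'$. The $2$-functors $p_*$ on the homotopy fiber bicategories and on the comma bicategories, together with the facts that composing with $p$ changes neither the $\A$-component nor the source $\B$-object and that $\bar{F}$ commutes with $p_*$, assemble into a morphism from the square $(\ref{hs2})$ for $b$ to the square $(\ref{hs2})$ for $b'$ which is the identity on the $\BB\A$- and $\BB\B$-corners, is $\BB p_*$ on the $\BB(F\!\!\downarrow)$-corners, and is the map $\BB p_*:\BB(\B\!\!\downarrow{\!_b})\to\BB(\B\!\!\downarrow{\!_{b'}})$ between contractible spaces on the fourth corner; the latter is a homotopy equivalence. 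As both squares are homotopy cartesian and the morphism is a weak equivalence on the three ``data'' corners, homotopy invariance of the homotopy pullback forces $\BB p_*:\BB(F\!\!\downarrow{\!_b})\to\BB(F\!\!\downarrow{\!_{b'}})$ to be a homotopy equivalence, which is $(i)$. Finally, when $(i)$ and $(ii)$ hold and $Fa=b$, the object $(1_b,a)$ of $F\!\!\downarrow{\!_b}$ is carried by the projection to $\BB a$ and by $\BB\bar{F}$ to the contraction point of $\BB(\B\!\!\downarrow{\!_b})$, so the homotopy cartesian square $(\ref{hs2})$ exhibits $\BB(F\!\!\downarrow{\!_b})\to\BB\A\to\BB\B$ as a homotopy fiber sequence with the stated base $0$-cells, and the long exact sequence on homotopy groups is its associated one. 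The main obstacle is the first step: verifying that the data above really do define an oplax bidiagram of bicategories $F\!\!\downarrow\,:\B\to\Bicat$, that is, the full set of coherence conditions.
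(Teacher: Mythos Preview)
Your proposal is correct and follows essentially the same strategy as the paper: construct the covariant oplax bidiagram $F\!\!\downarrow\,:\B\to\Bicat$, identify $\BB\!\int_\B F\!\!\downarrow$ with $\BB\A$ via a homomorphism with a section and an oplax transformation (the paper calls these $Q$ and $L$), and then apply Theorem \ref{th1}.

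There are two minor differences in execution worth noting. For $(i)\Rightarrow(ii)$, the paper inserts the auxiliary bicategory $\int_\B\B\!\!\downarrow$ and applies Theorem \ref{th1} a second time to $\B\!\!\downarrow$, assembling strictly commutative diagrams $(A)$ and $(B)$ of lax functors and invoking standard pasting lemmas for homotopy cartesian squares; you instead transport the single homotopy cartesian square from Theorem \ref{th1} along $\BB\Phi$ using the homotopy supplied by an oplax transformation $F\Phi\Rightarrow P$. Your route is slightly more direct but relies on the coherence of that oplax transformation, which you should check (it follows from the lax functor axioms for $F$ and coherence in $\B$). For $(ii)\Rightarrow(i)$, the paper gives a one-line argument: a $1$-cell $p:b\to b'$ yields a path in $\BB\B$ joining $\BB b$ and $\BB b'$, and homotopy fibers over points in the same path component are homotopy equivalent. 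Your morphism-of-squares argument is also correct but more elaborate than needed.
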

\begin{proof} $(ii)\Rightarrow (i)$ Suppose that $p:b\to b'$ is any 1-cell of $\B$.
Then, taking $\z:[1]\to \B$   the normal lax functor such that
$\z_{0,1}=p$, we have the
 path $\BB \z:\BB[1]\!=\!I\to \BB\B$,
    whose  origen is the point $\BB a$ and whose end is $\BB b$ (actually, $\BB\B$ is a CW-complex and $\BB\z$
    is one of its 1-cells).
 Since the homotopy fibers of a continuous map whose over points are connected by a path are homotopy equivalent, the result follows.

$(i)\Rightarrow (ii)$  This is divided into three parts.

{\em Part 1.} We begin here by noting that the bicategorical
homotopy fiber construction is actually the function on objects of a
covariant oplax bidiagram of bicategories
$$
F\!\!\downarrow \,\,=(F\!\!\downarrow\,, \chi,\xi,\omega,\gamma,\delta):\B\to\Bicat
$$
consisting of the following data:

\vspace{0.2cm} $(\mathbf{D1})$ for each object $b$ in $\B$, the
homotopy fiber bicategory \hspace{0.1cm}$F\!\!\downarrow{\!_b}$;

\vspace{0.2cm} $(\mathbf{D2})$ for each 1-cell $p:b\to b'$ of $\B$,
the 2-functor  $p_*\!:\, F\!\!\downarrow{\!_b}\to
F\!\!\downarrow{\!_{b'}}$ in $(\ref{h_*})$;

\vspace{0.2cm} $(\mathbf{D3})$ for each 2-cell $\xymatrix@C=0.5pc{b
\ar@/^0.6pc/[rr]^{p} \ar@/_0.6pc/[rr]_{
p'}\ar@{}[rr]|{\Downarrow\sigma} &
 &b' }$ of $\B$, the pseudo transformation $\sigma_*:p_*\Rightarrow
 p'_*$, whose component at an object $(f,a)$ of $F\!\!\downarrow{\!_{b}}$,
is the 1-cell $$\begin{array}{c}\sigma_*(f,a)=(\sigma \circledcirc
f,1_a):(p\circ f,a)\to (p'\circ f,a),\\
\sigma \circledcirc f=\big(\xymatrix@C=15pt{ p\circ
f\ar@2[r]^{\sigma\circ 1}& p'\circ f\ar@2[r]^-{\br^{-1}}
            & (p'\circ f)\circ 1_{Fa}\ar@2[r]^(.45){1\circ \widehat{F}} &
             (p'\circ f)\circ F1_a}\big)
\end{array}
$$
and whose naturality component at any 1-cell $(\beta,u):(f,a)\to
(f',a')$, as in $(\ref{1celcomf})$, is the canonical isomorphism
$\br^{-1}\cdot \bl:1_{a'}\circ u\cong u\circ 1_a$;
$$
\xymatrix@C=40pt{(p\circ f,a)\ar[r]^{(\sigma\circledcirc f,1_a)}
\ar[d]_{(p \circledcirc
\beta,u)}\ar@{}@<45pt>[d]|(.38){\br^{-1}\cdot\, \bl}|(.51){\cong}
& (p'\circ f,a)\ar[d]^{(p'\circledcirc \beta,u)} \\
(p\circ f',a')\ar[r]_{(\sigma\circledcirc f',1_{a'})}&(p'\circ
f',a') }
$$

$(\mathbf{D4})$ for each  two composable 1-cells
$\xymatrix@C=11pt{b\ar[r]^{p}&b'\ar[r]^{p'}&b''}$ in the bicategory
$\B$, the pseudo transformation ${\chi} _{_{\!p',p}}:(p'\circ
p)_*\Rightarrow p'_*p_*$ has component, at an object $(f,a)$ of
$F\!\!\downarrow{\!_{b}}$,  the 1-cell
$$\begin{array}{c}
(\text{{\aa}},1_a):((p'\circ p)\circ f,a)\to (p'\circ(p\circ f),a) ,\\
\text{{\aa}}=\big(\xymatrix@C=15pt{(p'\circ p)\circ f\ar@2[r]^{\aso}& p'\circ (p\circ f)\ar@2[r]^-{\br^{-1}}
            & (p'\circ (p\circ f))\circ 1_{Fa}\ar@2[r]^(.45){1\circ \widehat{F}} &
             (p'\circ (p\circ f))\circ F1_a}\big)
             \end{array}
$$
and whose naturality component at a 1-cell $(\beta,u):(f,a)\to
(f',a')$, is
$$
\xymatrix@C=40pt{((p'\circ p)\circ f,a)\ar[r]^{(\text{{\aa}},1)}
\ar[d]_{((p'\circ p) \circledcirc \beta),u)}
\ar@{}@<55pt>[d]|(.38){\br^{-1}\cdot\, \bl}|(.51){\cong}
& (p'\circ (p\circ f),a)\ar[d]^{(p'\circledcirc (p\circledcirc \beta),u)} \\
((p'\circ p)\circ f',a')\ar[r]_{(\text{{\aa}},1)}&(p'\circ (p\circ f'),a');
}
$$

$(\mathbf{D5})$ for each object $b$ of $\B$,
\hspace{0.1cm}$\chi_{_b}:{1_{b}}_*\Rightarrow 1_{F\downarrow{_{b}}}$
is the pseudo transformation whose component at any object $(f,a)$
is the 1-cell
$$\begin{array}{c}
(\overset{_\circ}{1}\cdot \bl,1_a):(1_b\circ f,a) \to (f,a),\\
\overset{_\circ}{1}\cdot \bl=\big(\xymatrix@C=15pt{1_b\circ f\ar@2[r]^-{\bl}& f\ar@2[r]^-{\br^{-1}}
            & f\circ 1_{Fa}\ar@2[r]^(.45){1\circ \widehat{F}} &
             f\circ F1_a}\big)\end{array}
$$
 and whose naturality component, at a 1-cell $(\beta,u):(f,a)\to
(f',a')$, is
$$
\xymatrix@C=40pt{(1_b\circ f,a)\ar[r]^{(\overset{_\circ}{1},1)}
\ar[d]_{(1_b\circledcirc \beta,u)}\ar@{}
\ar@{}@<45pt>[d]|(.38){\br^{-1}\cdot\, \bl}|(.51){\cong}
& (f,a)\ar[d]^{(\beta,u)} \\
(1_b\circ f',a')\ar[r]_{(\overset{_\circ}{1},1)}&(f',a');
}
$$

$(\mathbf{D6})$ for any two vertically composable 2-cells
$\xymatrix@C=12pt{p\ar@2[r]^{\sigma}&p'\ar@2[r]^{\tau}&p'' }$ in
$\B$,
 the invertible modification \hspace{0.1cm}${\xi}_{\tau,\sigma}:\tau_*\circ \sigma_*
 \Rrightarrow  (\tau\cdot  \sigma)_*$ has component, at any object
 $(f,a)$, the canonical isomorphism $\bl:1_a\circ 1_a\cong 1_a$
 $$
\xymatrix@C=4pt@R=14pt{
& (p\circ f,a)\ar[ld]_{(\sigma\circledcirc f,1_a)}\ar[rd]^{\ ((\tau\cdot\sigma)\circledcirc f,1_a)}
\ar@{}[d]|(.6){\cong}|(.42){\bl}&
\\
(p'\circ f,a)\ar[rr]_{(\tau\circledcirc f,1_a)}&&(p''\circ f,a);
}
$$

\vspace{0.2cm}$(\mathbf{D7})$ for each 1-cell $p:b\to b'$ of $\B$,
 ${(1_p)}_*=1_{p_*}$, and   \hspace{0.1cm}${\xi}_{_p}$ is the identity modification;

$(\mathbf{D8})$ for every two horizontally composable 2-cells
$\xymatrix@C=0.5pc{b\ar@/^0.5pc/[rr]^{p} \ar@/_0.5pc/[rr]_{q}
\ar@{}[rr]|{\Downarrow\sigma}&  &
b'\ar@/^0.5pc/[rr]^{p'}\ar@/_0.5pc/[rr]_{q'}
\ar@{}[rr]|{\Downarrow\tau}& & b''}$ in $\B$, the equality $
(\tau_*\sigma_*)\!\circ {\chi}{_{p',p}}={\chi}{_{q',q}}\circ
(\tau\circ \sigma)_* $ holds and the  modification
${\chi}_{_{\tau,\sigma}}$ is the identity;

$(\mathbf{D9})$ for every three composable 1-cells
$\xymatrix@C=13pt{b\ar[r]^{p}&b'\ar[r]^{p'}&b''\ar[r]^{p''}&b'''}$
in $\B$, the invertible modification ${\omega}_{_{p'',p',p}}$, at
any object $(f,a)$, is the canonical isomorphism $\br:(1_a\circ
1_a)\circ 1_a\cong 1_a\circ 1_a$,
$$
\xymatrix@C=30pt{
(((p''\circ p')\circ p)\circ f,a)\ar[rr]^{(\aso\circledcirc f,1_a)}
\ar[d]_{(\text{{\aa}},1_a)}
&\ar@{}[d]|(.55){\cong}|(.40){\br}&((p''\circ(p'\circ p))\circ f,a)
\ar[d]^{(\text{{\aa}},1_a)}\\
((p''\circ p')\circ(p\circ f),a)\ar[r]^{(\text{{\aa}},1_a)}&
(p''\circ(p'\circ(p\circ f)),a)
&(p''\circ((p'\circ p)\circ f),a)\ar[l]_{(p''\circledcirc\, \text{{\aa}},1_a)}
;
}
$$

$(\mathbf{D10})$ for any $1$-cell $p:b\to b'$ of $\B$, the
invertible modifications $\gamma_p$ and $\delta_p$, at any object
$(f,a)$ are given by the canonical isomorphism $1_a\circ (1_a\circ
1_a)\cong 1_a$,
$$
\xymatrix@C=30pt{(1_{b'}\circ(p\circ f),a)\ar[r]^-{(\overset{_\circ}{1}\cdot \bl,1_a)}
\ar@{}@<45pt>[d]|(.55){\cong}|(.43){\br\cdot\br} & (p\circ f,a)\ar[d]^-{(\overset{_\circ}{1},1_a)}
\\
((1_{b'}\circ p)\circ f,a)\ar[u]^{(\text{\aa},1_a)}\ar[r]_-{(\bl\circledcirc f,1_a)} & (p\circ f,a)
}
\xymatrix@C=35pt{(p\circ(1_{b'}\circ f),a)\ar[r]^-{(p\circledcirc(\overset{_\circ}{1}\cdot\bl),1_a)}
\ar@{}@<45pt>[d]|(.55){\cong}|(.43){\br\cdot\br} & (p\circ f,a)\ar[d]^{(\overset{_\circ}{1},1_a)}
\\
((p\circ 1_{b'})\circ f,a)\ar[u]^{(\text{\aa},1_a)}\ar[r]_-{(\br\circledcirc f,1_a)} & (p\circ f,a).}
$$

Observe that all the $2$-cells given above are well defined since
all the data is obtained from the constraints of the bicategories
involved and the lax functor $F$. Then the coherence conditions of
these give us the equality (\ref{1102}) in each case. For the same
reason the axioms $(\mathbf{C1})-(\mathbf{C8})$ hold.

{\em Part 2.} In this part, we consider the Grothendieck
construction on the oplax bidiagram of homotopy fibers
$F\!\!\downarrow\,:\B\to \Bicat$, and we shall prove the following:
\begin{lemma}\label{lemQ} There is a homomorphism
\begin{equation}\label{Q}
\xymatrix{Q:\int_{\B}\!F\!\!\downarrow\ \to \A,}
\end{equation}
inducing a homotopy equivalence  on classifying spaces, $\BB
Q:\BB\!\int_{\B}\!F\!\!\downarrow\ \simeq \BB\A$.
\end{lemma}

Before starting the proof of the lemma,  we shall briefly describe
the bicategory $\int_{\B}\!F\!\!\downarrow$. It has objects the
triplets $(f,a,b)$, with $a\in\Ob\A$, $b\in\Ob\B$, and $f:Fa\to b$ a
1-cell of $\B$. Its 1-cells $$(\beta,u,p):(f,a,b)\to (f',a',b'),$$
consist of a $1$-cell $p:b\to b'$ in $\B$, together with a $1$-cell
$(\beta,u):p_*(f,a)=(p\circ f,a)\to (f',a')$ in
$F\!\!\downarrow\!_{b'}$, that is,  a 1-cell $u:a\to a'$ in $\A$ and
a 2-cell $\beta: p\circ f\Rightarrow f'\circ Fu$ in $\B$
$$
\xymatrix{Fa
\ar@{}@<20pt>[d]|(.4){\beta}|(.55){\Rightarrow}
\ar[r]^{Fu}\ar[d]_{f}&Fa'\ar[d]^{f'}\\ b\ar[r]^{p}&b'.}
$$
A 2-cell in $\int_{\B}\!F\!\!\downarrow$,
$$\xymatrix@C=30pt{(f,a,b)\ar@/^1pc/[r]^{(\beta,u,p)}
\ar@/_1pc/[r]_{(\beta',u',p')}
\ar@{}[r]|{\Downarrow(\alpha,\sigma)}& (f',a',b')},
$$
 consists of a 2-cell $\sigma:p\Rightarrow p'$ in $\B$, together with a 2-cell $\alpha:(\beta,u)\Rightarrow (\beta',u')\circ \sigma_*(f,a)$ in $F\!\!\downarrow\!_{b'}$,  that is, (after some work using coherence
equations) a 2-cell
 $\alpha: u\Rightarrow u'\circ 1_a$ in $\A$, such that the equation below
 holds.
$$
\xymatrix@R=35pt@C=35pt{
Fa\ar[r]^{Fu'}\ar[d]_{f}&Fa'\ar[d]^{f'}\ar@{}@<25pt>[d]|{\textstyle =}\\ b
\ar@/^1pc/[r]_{\Uparrow \sigma}\ar@{}@<16pt>[r]|(.4){p'}\ar@{}@<-25pt>[u]|(.6){\Rightarrow}|(.72){\beta'}
\ar[r]_p&b'}\hspace{0.2cm}
\xymatrix@R=35pt@C=35pt{
Fa
\ar@/_1.3pc/[r]\ar@{}@<-19pt>[r]|(.4){Fu}
\ar[r]^{Fu'}_{\Uparrow F(\br\cdot\alpha)}\ar[d]_{f}&Fa'\ar[d]^{f'}\\ b
\ar@{}@<-25pt>[u]|(.25){\Rightarrow}|(.37){\beta} \ar[r]_p&b'}
$$

We shall look carefully at the vertical composition of $2$-cells and
the horizontal composition of $1$-cells in $\int_\B F\!\!\downarrow$
since we will use them later: Given two vertically composable
$2$-cells, say  $(\alpha,\sigma)$ as above and
$(\alpha',\sigma'):(\beta',u',p')\Rightarrow (\beta'',u'',p'')$,
their vertical composition is given by the formula
$$(\alpha',\sigma')\cdot(\alpha,\sigma)=(\alpha'\cdot \br \cdot \alpha,\sigma'\cdot \sigma):
(\beta,u,p)\Rightarrow (\beta'',u'',p'').$$ Given two composable
$1$-cells, say $(\beta,u,p)$ as above and
$(\beta',u',p'):(f',a',b')\to (f'',a'',b'')$, their horizontal
composition is
$$(\beta',u',p')\circ (\beta,u,p)=(F\br^{-1}\cdot(\beta'\circledcirc (1_{p'}\circ
\beta)),(u'\circ u)\circ 1_a,p'\circ p):(f,a,b)\to (f'',a'',b''),$$
where $\beta'\circledcirc (1_{p'}\circ \beta)$ is as in
$(\ref{oo})$, thus
$$
\xymatrix{
Fa
\ar@{}@<-70pt>[d]|(.3){\textstyle F\br^{-1}\cdot(\beta'\circledcirc (1_{p'}\circ
\beta)):}
\ar@/^2pc/[rr]^{F((u'\circ u)\circ 1_a)} \ar@{}@<14pt>[rr]|{\Uparrow
F\br^{-1}\cdot \widehat{F}}
\ar@{}@<24pt>[d]|(.52){\Rightarrow}|(.38){\beta}
\ar[r]^{Fu}\ar[d]_{f}&Fa'
\ar@{}@<24pt>[d]|(.52){\Rightarrow}|(.38){\beta'}
\ar[r]^{Fu'}\ar[d]_{f'}&Fa''
\ar[d]^{f''}\\
b\ar[r]^{p}&b' \ar[r]^{p'}&b''.
}
$$
The identity $1$-cell at an object $(f,a,b)$ is
$$\begin{array}{c}1_{(f,a,b)}=(\overset{_\circ}{1}_{(f,a)}\cdot \bl,1_a,1_b):(f,a,b)\to
(f,a,b).\\[5pt]
\overset{_\circ}{1}_{(f,a)}\cdot
\bl=\Big(\xymatrix@C=10pt{1_b\circ f\ar@{=>}[r]^-{\bl}&
f\ar@2[r]^-{\br^{-1}}& f\circ 1_{Fa}\ar@2[r]^-{1_f\circ
\widehat{F}}&f\circ F(1_a)}\Big)
\end{array}
$$

{\em Proof of Lemma \ref{lemQ}.} The homomorphism $Q$ in $(\ref{Q})$
is defined on cells by
$$\begin{array}{c}\xymatrix@C=30pt{(f,a,b)\ar@/^1pc/[r]^{(\beta,u,p)}
\ar@/_1pc/[r]_{(\beta',u',p')}
\ar@{}[r]|{\Downarrow(\alpha,\sigma)}& (f',a',b')}\ \overset{Q}\mapsto \
\xymatrix{a\ar@/^0.8pc/[r]^{u}\ar@/_0.8pc/[r]_{u'}\ar@{}[r]|{\Downarrow \br\cdot\alpha}&a',}\\[5pt]
\br\cdot\alpha=\big(u\overset{\alpha}\Rightarrow u'\circ 1_a\overset{\br}\Rightarrow
u'\big)
\end{array}
$$
This homomorphism $Q$ is strictly unitary, and its structure
isomorphism at any two composable 1-cells, say $(\beta,u,p)$ as
above and $(\beta',u',p'): (f',a',b')\to (f'',a'',b'')$, is
$$\widehat{Q}=\br_{u'\circ u}:Q((\beta',u',p')\circ (\beta,u,p))\cong Q(\beta',u',p')\circ
Q(\beta,u,p).
$$

To prove that this homomorphism $Q$ induces a homotopy equivalence
on classifying spaces, let us observe that there is also a lax
functor $L:\A\to \int_{\B}\!F\!\!\downarrow$, such that
$Q\,L=1_{\A}$. This is defined on cells of $\A$ by
$$\begin{array}{c}
\xymatrix{a\ar@/^0.8pc/[r]^{u}\ar@/_0.8pc/[r]_{u'}\ar@{}[r]|{\Downarrow\alpha}&a'}
\ \overset{L}\mapsto \
\xymatrix@C=55pt{(1_{Fa},a,Fa)\ar@/^1pc/[r]^{(\bl^{-1}\cdot \br,u,Fu)}
\ar@/_1pc/[r]_{(\bl^{-1}\cdot \br,u',Fu')}
\ar@{}[r]|{\Downarrow(\br^{-1}\cdot\alpha,F\alpha)}& (1_{Fa'},a',Fa'),}\\[5pt]
\br^{-1}\cdot \alpha=\big(u\overset{\alpha}\Rightarrow u'\overset{\br^{-1}}\Rightarrow
u'\circ 1_a\big)
\end{array}
$$
where the first component of $(\bl^{-1}\cdot \br,u,Fu)$ is the
canonical isomorphism $Fu\circ 1_{Fa}\cong 1_{Fa'}\circ Fu$. Its
structure 2-cells, at any pair of composable 1-cells
$a\overset{u}\to a'\overset{u'}\to a''$ and at any object $a$ of
$\A$, are respectively defined by
$$
\begin{array}{ll}
\widehat{L}_{u',u}=(1_{(u'\circ u)\circ 1_a},\widehat{F}_{u',u}):\ Lu'\circ Lu \Rightarrow L(u'\circ u),\\
\widehat{L}_{a}=(\br^{-1}_{1_a},\widehat{F}_{a}): 1_{La} \Rightarrow L1_{a}.
\end{array}
$$

The equality  $QL=1_{\A}$ is easily checked. Furthermore, there is
an oplax transformation $\iota:LQ\Rightarrow
1_{\int_{\B}\!F\downarrow}$ assigning to each object $(f,a,b)$ of
the bicategory $\int_{\B}\!F\!\!\downarrow$ the 1-cell
$$
\iota(f,a,b)=(1_f\circ \widehat{F}_a,1_a,f):(1_{Fa},a,Fa)\to (f,a,b),
$$
and whose naturality component at any 1-cell $(\beta,u,p):(f,a,b)\to
(f',a',b')$ is the 2-cell
$$
\xymatrix@C=45pt{
(1_{Fa},a,Fa)
\ar@{}@<60pt>[d]|(.35){\widehat{\iota}\,=((\bl^{-1}\circ 1)\circ 1,\beta)}|(.5){\Rightarrow}
\ar[r]^{(\bl^{-1}\cdot \br,u,Fu)}\ar[d]_{(1_f\circ
\widehat{F}_a,1_a,f)}&
(1_{Fa'},a',Fa')\ar[d]^{(1_{f'}\circ \widehat{F}_{a'},1_{a'},f')}\\
(f,a,b)\ar[r]_-{(\beta,u,p)}&(f',a',b').
}
$$

Therefore, by taking classifying spaces, we have $\BB Q\,\BB
L=1_{\BB\A}$ and, by Lemma \ref{trans}, $\BB L\,\BB Q\simeq
1_{\BB\int_{\B}\!F\downarrow}$, whence $\BB Q$ is actually a
homotopy equivalence. \qed

{\em Part 3.} We complete here the proof of the theorem as follows:
There is a canonical homomorphism
\begin{equation}\label{fba2}\xymatrix{\bar{F}:\int_\B F\!\!\downarrow \,\longrightarrow \,
\int_\B \B\!\!\downarrow}
\end{equation}
making commutative,  for any object $b\in\Ob\B$, the diagrams
$$
\begin{array}{cc}
\xymatrix{\ar@{}@<-20pt>[d]|(.3){\textstyle (A):}&&\\F\!\!\downarrow \!b \ar[r]^{J} \ar[d]_{\bar{F}} \ar@/^1.5pc/[rr]^{P} &
      \int_\B F\!\!\downarrow \ar[r]^{Q} \ar@{-->}[d]^{\bar{F}} &
      \A \ar[d]^{F}
      \\
      \B\!\!\downarrow \!b \ar[r]^{J} \ar@/_1.5pc/[rr]_{P} &
      \int_\B \B\!\!\downarrow \ar[r]^{Q} &
        \B
}
\hspace{1cm}
\xymatrix{\ar@{}@<-20pt>[d]|(.3){\textstyle (B):}&&\\ F\!\!\downarrow\! b \ar[r]^{F} \ar[d]_{J} \ar@/^1.5pc/[rr] &
      \B\!\!\downarrow\! b \ar[r] \ar[d]^J &
      [0]\ar[d]^{b}
      \\
      \int_\B F\!\!\downarrow \ar@{-->}[r]^{\bar{F}} \ar@/_1.5pc/[rr]_P &
      \int_\B \B\!\!\downarrow \ar[r]^{P} &
      \B
}
\end{array}
$$
in which  $Q:\int_\B F\!\!\downarrow \to \A$ is the homomorphism in
$(\ref{Q})$ and $Q:\int_\B \B\!\!\downarrow \to \B$ is the
corresponding one for $F=1_{\B}$,  all the 2-functors $P$ are  the
canonical projections $(\ref{proj2fun})$, and the embedding
homomorphisms $J$ are the corresponding ones defined as in
$(\ref{emj})$. This homomorphism $(\ref{fba2})$ is  defined on cells
by
$$\begin{array}{c}
\xymatrix@C=30pt{(f,a,b)\ar@/^1pc/[r]^{(\beta,u,p)}
\ar@/_1pc/[r]_{(\beta',u',p')}
\ar@{}[r]|{\Downarrow(\alpha,\sigma)}& (f',a',b')}\,\overset{\bar{F}}\mapsto\,
\xymatrix@C=30pt{(f, F a,b)\ar@/^1pc/[rr]^{(\beta, F u,p)}
\ar@/_1pc/[rr]_{(\beta', F u',p')}
\ar@{}[rr]|{\Downarrow( \br^{-1}\cdot F\br\cdot F\alpha,\sigma)}&& (f', F a',b').}
\\
\br^{-1}\cdot F\br\cdot F\alpha=\Big(\xymatrix@C=15pt{Fu\ar@2[r]^-{F\alpha}&
 F(u'\circ 1_a)\ar@2[r]^-{F\br}& Fu'\ar@2[r]^-{\br^{-1}} & Fu'\circ 1_{Fa}}\Big)
 \end{array}
$$
Its composition constraint at a pair of composable $1$-cells, say
$(\beta,u,p)$  as above and $(\beta',u',p'):(f',a',b')\to
(f'',a'',b'')$, is the 2-cell
$$\begin{array}{c}
(\widetilde{F},1_{p'\circ p}):\bar{F}(\beta',u',p')\circ
\bar{F}(\beta,u,p)\Rightarrow \bar{F}((\beta',u',p')\circ (\beta,u,p)),\\
\widetilde{F}=\Big(\xymatrix@C=30pt{(Fu'\circ Fu)\circ 1_{Fa}\ar@2[r]^{\widehat{F}\circ 1}
&F(u'\circ u)\circ 1_{Fa}\ar@2[r]^-{F(\br^{-1})\circ 1} &F((u'\circ u)\circ 1_a)\circ 1_{Fa}}\Big)
\end{array}
$$
while its unit constraint at an object $(f,a,b)$ is
$$\begin{array}{c}
(\widetilde{F},1_{1_b}):1_{\bar{F}(f,a,b)}\Rightarrow
\bar{F}(1_{(f,a,b)}).\\
\widetilde{F}=\Big(\xymatrix@C=15pt{1_{Fa}\ar@2[r]^-{\br^{-1}}&1_{Fa}\circ
1_{Fa}\ar@2[r]^-{\widehat{F}\circ 1} &F(1_a)\circ
1_{Fa}}\Big)\end{array}
$$

Let us now observe that (the covariant and oplax version of) Theorem
\ref{th1} applies both to the bidiagram of homotopy fibres
$F\!\!\downarrow$, by hypothesis, and to the bidiagram of comma
bicategories $\B\!\!\downarrow$, since the spaces
$\BB\B\!\!\downarrow{\!_{b}}$ are contractible by Lemma \ref{cont}
and therefore any 1-cell $p:b\to b'$ in $\B$ obviously induces a
homotopy equivalence $\BB p_*:\BB\B\!\!\downarrow{\!_{b}}\simeq
\BB\B\!\!\downarrow{\!_{b'}}$. Hence, the squares
\begin{equation}\label{squbicat}
\begin{array}{c}\xymatrix{
F\!\!\downarrow{\!_{b}}\ar[r]^{J}\ar[d]&\int_\B F\!\!\downarrow\ar[d]^{P}\\ [0]\ar[r]^{b}&\B
}  \hspace{0.5cm}
\xymatrix{
\B\!\!\downarrow{\!_{b}}\ar[r]^{J}\ar[d]&\int_\B \B\!\!\downarrow\ar[d]^{P}\\ [0]\ar[r]^{b}&\B
}
\end{array}
\end{equation}
induce homotopy cartesian squares on classifying spaces
$$
\begin{array}{c}\xymatrix{
\BB F\!\!\downarrow{\!_{b}}\ar[r]^{\BB J}\ar[d]&\BB\int_\B F\!\!\downarrow\ar[d]^{\BB P}\\
pt\ar[r]^{\BB b}&\BB\B,
}  \hspace{0.5cm}
\xymatrix{
\BB\B\!\!\downarrow{\!_{b}}\ar[r]^{\BB J}\ar[d]&\BB\int_\B \B\!\!\downarrow\ar[d]^{\BB P}\\
pt\ar[r]^{\BB b}&\BB \B.
}
\end{array}
$$

By \cite[II, Lemma 8.22 (2)(b)]{g-j}, it follows from the
commutativity of diagram $(B)$ above that the induced square
$$
\xymatrix{\BB F\!\!\downarrow{\!_{b}}\ar[r]^{\BB\bar{F}}\ar[d]_{\BB J}&\BB \B\!\!\downarrow{\!_{b}}\ar[d]^{\BB J}\\
\BB\int_\B F\!\!\downarrow\ar[r]^{\BB\bar{F}}&\BB \int_B \B\!\!\downarrow}
$$
is homotopy cartesian. Then, by \cite[II, Lemma 8.22 (1),
(2)(a)]{g-j}, the theorem follows from the commutativity of diagram
$(A)$, since, by Lemma \ref{lemQ}, in the induced square
$$
\xymatrix{\BB \int_B F\!\!\downarrow\ar[r]^{\BB\bar{F}}\ar[d]_{\BB Q}&\BB \int_\B\B\!\!\downarrow\ar[d]^{\BB Q}\\
\BB\A\ar[r]^{\BB F}&\BB \B}
$$
both maps $\BB Q$ are homotopy equivalences and therefore it is
homotopy cartesian.
\end{proof}

The following corollary generalizes Quillen's Theorem A in
\cite{quillen}:
\begin{theorem} \label{A}
Let $F:\A\to \B$ be a lax functor between bicategories. The induced
map on classifying spaces $\BB F:\BB \A\to \BB\B$ is a homotopy
equivalence whenever the classifying spaces of the homotopy fiber
bicategories $\BB F\!\!\downarrow{\!_{b}}$ are contractible for all
objects $b$ of $\B$.
\end{theorem}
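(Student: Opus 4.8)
The plan is to obtain this as a direct consequence of Theorem~\ref{B}. First I would check that the hypothesis makes condition $(i)$ of Theorem~\ref{B} hold automatically: for every $1$-cell $p\colon b\to b'$ of $\B$, the map $\BB p_*\colon\BB(F\!\!\downarrow{\!_b})\to\BB(F\!\!\downarrow{\!_{b'}})$ is a continuous map between two contractible spaces and is therefore a homotopy equivalence. Hence Theorem~\ref{B} applies and tells us that condition $(ii)$ holds as well, that is, for every object $b$ of $\B$ the square $(\ref{hs2})$ is homotopy cartesian.

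Next I would read off the homotopy fibre of $\BB F$ from that square. Its upper-left corner $\BB(F\!\!\downarrow{\!_b})$ is contractible by hypothesis, and its upper-right corner $\BB(\B\!\!\downarrow{\!_b})$ is contractible by Lemma~\ref{cont}; moreover the point of $\BB(\B\!\!\downarrow{\!_b})$ singled out in the proof of Lemma~\ref{cont}, namely $\BB(1_b,b)$, is carried by $\BB P$ to the $0$-cell $\BB b$. Thus the homotopy cartesian square $(\ref{hs2})$ identifies the homotopy fibre of $\BB F\colon\BB\A\to\BB\B$ over $\BB b$ with $\BB(F\!\!\downarrow{\!_b})$, so this homotopy fibre is contractible for every object $b$ of $\B$.

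Finally I would upgrade this to all of $\BB\B$. Since the $0$-simplices of the geometric nerve $\Delta\B$ are exactly the objects of $\B$, every path component of the CW-complex $\BB\B$ contains a $0$-cell of the form $\BB b$ with $b\in\Ob\B$, and the homotopy fibres of a map over points in one path component are mutually homotopy equivalent; hence every homotopy fibre of $\BB F$ is contractible. A map between CW-complexes with contractible homotopy fibres induces isomorphisms on all homotopy groups at all base points (via the long exact sequence of the fibration obtained by replacing $\BB F$ with a fibration, exactly as in the exact sequence displayed at the end of Theorem~\ref{B} but with $\pi_n\BB(F\!\!\downarrow{\!_b})=0$), so it is a weak homotopy equivalence, hence a homotopy equivalence by Whitehead's theorem. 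There is no real obstacle here; the only small point to watch is the last step, keeping track of base points so that contractibility of the homotopy fibre over the $0$-cells $\BB b$ suffices to conclude that $\BB F$ is a homotopy equivalence --- and this is immediate once one notes that those $0$-cells meet every path component of $\BB\B$.
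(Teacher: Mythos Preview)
Your proposal is correct and follows the same approach as the paper, which presents Theorem~\ref{A} as an immediate corollary of Theorem~\ref{B} without spelling out a proof. You have simply supplied the standard details: contractibility of the fibres forces condition~$(i)$ of Theorem~\ref{B}, the resulting homotopy cartesian square together with Lemma~\ref{cont} shows the homotopy fibres of $\BB F$ are contractible, and Whitehead's theorem finishes the argument.
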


Particular cases of the result above have been also stated in
\cite[Theorem 1.2]{b-c},  for the case when $F:\A\to \B$ is any
2-functor between 2-categories, and in \cite[Theorem 6.4]{Hoyo}, for the
case when $F$ is a lax functor from a category $\A$ to a 2-category
$\B$. In \cite[Th\'{e}or\`{e}me 6.5]{chiche}, it is stated a relative Theorem A for lax functors between 2-categories, which also implies the particular case of Theorem \ref{A} when $F$ is any lax functor between 2-categories.

\begin{example}{\em
Let $(\mathcal{M},\otimes)=(\mathcal{M},\otimes,I,\aso,\bl,\br)$ be
a monoidal category (see e.g. \cite{maclane}), and let $\Sigma
(\mathcal{M},\otimes)$ denote its {\em suspension} or {\em
delooping} bicategory. That is, $\Sigma (\mathcal{M},\otimes)$ is
the  bicategory with only one object, say $\star$, whose
hom-category is $\mathcal{M}$, and whose horizontal composition is
given by the tensor functor $\otimes:\mathcal{M}\times
\mathcal{M}\to \mathcal{M}$.  The identity 1-cell on the object is
the unit object $I$ of the monoidal category, and the constraints
$\aso$, $\bl$, and $\br$ for $\Sigma (\mathcal{M},\otimes)$ are just
those of the monoidal category.

By \cite[Theorem 1]{b-c},
$$
\BB(\mathcal{M},\otimes)=\BB\Sigma (\mathcal{M},\otimes),$$ that is,
the classifying space of the monoidal category is the  classifying
space of its suspension bicategory. Then, Theorem \ref{B} is
applicable to monoidal functors between monoidal categories.

However, we should stress that the homotopy fiber bicategory of the
homomorphism between the suspension bicategories that a monoidal
functor $F:(\mathcal{M},\otimes)\to (\mathcal{M}',\otimes)$ defines,
 $\Sigma F:\Sigma(\mathcal{M},\otimes)\to \Sigma(\mathcal{M}',\otimes)$, at the
unique object of $\Sigma(\mathcal{M}',\otimes)$, is not a monoidal
category but a genuine bicategory: The 0-cells of $\Sigma
F\!\!\downarrow_\star$ are the objects $x'\in\mathcal{M}'$, its
1-cells  $(u',x):x'\to y'$ are pairs with $x$ an object in
$\mathcal{M}$ and $u':x'\to y'\otimes F(x)$ a morphism in
$\mathcal{M}'$, and its 2-cells $$ \xymatrix @C=8pt{x'
\ar@/^0.7pc/[rr]^{(u',x)} \ar@/_0.7pc/[rr]_-{
(v',y)}\ar@{}[rr]|{\Downarrow\! u } &  & y'}
$$
are those morphisms $u:x\to y$ in $\mathcal{M}$ making commutative the triangle $$
\xymatrix{x'\ar[r]^{u'}\ar[rd]_{v'}&y'\otimes Fx\ar[d]^{y'\otimes
Fu}\\ & y'\otimes Fy. }
$$
The vertical composition of 2-cells is given by the composition of arrows in $\mathcal{M}$. The horizontal composition of two 1-cells $\xymatrix{x'\ar[r]^{(u',x)}&y'\ar[r]^{(v',y)}&z'}$ is the 1-cell
$
(v' \circledcirc u', y\otimes x): x'\to z'
$,
$$
v' \circledcirc u'=\Big(\xymatrix{x'\ar[r]^-{u'}&y'\otimes
Fx\ar[r]^-{v'\otimes Fx}&(z'\otimes Fy)\otimes Fx}\cong  z'\otimes
(Fy\otimes Fx)\cong z'\otimes
 F(y\otimes x)\Big)
$$
and the horizontal composition of 2-cells is given by tensor product of arrows in $\mathcal{M}$. The identity 1-cell of any 0-cell $x$ is $ (\overset{_\circ}{1}_x,I):x\to x$, where
$
\overset{_\circ}{1}_x=(x'\cong x'\otimes I'\cong x'\otimes FI)
$. The associativity, left and right constraints are obtained from those of $(\mathcal{M},\otimes)$
by the formulas
$$
\boldsymbol{a}_{(w',z),(v',y),(u',x)}=\boldsymbol{a}_{z,y,x},\hspace{0.3cm}
\boldsymbol{r}_{(u',x)}= \boldsymbol{r}_x,\hspace{0.3cm}
\boldsymbol{l}_{(u',x)}=\boldsymbol{l}_x.
$$
Following the terminology of \cite[page 228]{b-c2}, we shall call this
bicategory $\Sigma F\!\!\downarrow_\star$ the {\em homotopy fiber bicategory} of the monoidal functor $F:(\mathcal{M},\otimes)\to (\mathcal{M}',\otimes)$, and write it by $\mathcal{K}_F$.

Every object $z'$ of $\mathcal{M}'$, determines a 2-endofunctor
$z'\otimes -:\mathcal{K}_F\to \mathcal{K}_F$, which is defined on
cells by
 $$ \xymatrix @C=8pt{x'
\ar@/^0.7pc/[rr]^{(u',x)} \ar@/_0.7pc/[rr]_-{
(v',y)}\ar@{}[rr]|{\Downarrow\! u } &  & y'}
 \mapsto \
\xymatrix @C=10pt{z'\otimes x'
\ar@/^0.7pc/[rr]^{(z'\circledcirc u',x)} \ar@/_0.7pc/[rr]_-{
(z'\circledcirc v',y)}\ar@{}[rr]|{\Downarrow\! u } &  & z'\otimes y',}
$$
where  $z'\circledcirc u'=\Big(\xymatrix{z'\otimes
x'\ar[r]^-{z'\otimes u'}&z'\otimes (y'\otimes Fx)}\cong (z'\otimes
y')\otimes Fx\Big)$, and
 from  Theorems \ref{B} and \ref{A}, we get the following:

\begin{theorem} For any monoidal functor $F:(\mathcal{M},\otimes)\to
(\mathcal{M}',\otimes)$, the following statements hold:

(i) There is an induced homotopy fiber sequence
$$\BB\mathcal{K}_F\to\BB(\mathcal{M},\otimes)\overset{\BB F}\longrightarrow \BB(\mathcal{M}',\otimes),$$
whenever the induced maps $\BB(z'\otimes
-):\BB\mathcal{K}_F\to\BB\mathcal{K}_F$ are homotopy
autoequivalences, for all $z'\in \mbox{Ob}\mathcal{M}'$.

(ii) The induced map $\BB F: \BB(\mathcal{M},\otimes)\to
\BB(\mathcal{M}',\otimes)$ is a homotopy equivalence if the space
$\BB\mathcal{K}_F$ is contractible.
\end{theorem}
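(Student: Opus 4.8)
The plan is to derive both statements from Theorems \ref{B} and \ref{A}, applied not to $F$ directly but to the homomorphism of bicategories $\Sigma F:\Sigma(\mathcal{M},\otimes)\to\Sigma(\mathcal{M}',\otimes)$ induced on suspension bicategories. First I would use \cite[Theorem 1]{b-c} to identify $\BB(\mathcal{M},\otimes)=\BB\Sigma(\mathcal{M},\otimes)$ and $\BB(\mathcal{M}',\otimes)=\BB\Sigma(\mathcal{M}',\otimes)$, and to observe that under these identifications $\BB F$ agrees with $\BB\Sigma F$; since $\Sigma F$ is in particular a lax functor between bicategories, Theorems \ref{B} and \ref{A} apply to it.

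Next I would pin down the relevant homotopy fiber bicategory. The target $\Sigma(\mathcal{M}',\otimes)$ has a single object $\star$, so $\Sigma F$ has exactly one homotopy fiber bicategory, $\Sigma F\!\!\downarrow_\star$, and spelling out the Grothendieck construction of Section \ref{theB} for $\A=\Sigma(\mathcal{M},\otimes)$ and $b=\star$ one recognizes it as precisely the bicategory $\mathcal{K}_F$ written out in the Example above. Likewise, the $1$-cells $p:\star\to\star$ of $\Sigma(\mathcal{M}',\otimes)$ are exactly the objects $z'\in\Ob\mathcal{M}'$, and the $2$-functor $p_*:\Sigma F\!\!\downarrow_\star\to\Sigma F\!\!\downarrow_\star$ of $(\ref{h_*})$ becomes, under the identification with $\mathcal{K}_F$, the $2$-endofunctor $z'\otimes-:\mathcal{K}_F\to\mathcal{K}_F$ described in the Example. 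Both are a matter of unwinding the horizontal composition, the identity $1$-cells, and the associativity and unit constraints of the two suspension bicategories together with the structure morphisms $\widehat{F}$ of the monoidal functor.

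With these dictionary entries the conclusions are immediate. For $(i)$: the assumption that each $\BB(z'\otimes-):\BB\mathcal{K}_F\to\BB\mathcal{K}_F$ is a homotopy autoequivalence is, by the previous paragraph, exactly hypothesis $(i)$ of Theorem \ref{B} for $\Sigma F$ --- the target has only the object $\star$, and its $1$-cells are the $z'$. Hence Theorem \ref{B} yields the homotopy cartesian square, and in particular the homotopy fibre sequence $\BB(\Sigma F\!\!\downarrow_\star)\to\BB\Sigma(\mathcal{M},\otimes)\to\BB\Sigma(\mathcal{M}',\otimes)$ relative to the (forced) base $0$-cells, which under the first paragraph's identifications is the asserted sequence $\BB\mathcal{K}_F\to\BB(\mathcal{M},\otimes)\overset{\BB F}{\longrightarrow}\BB(\mathcal{M}',\otimes)$. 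For $(ii)$: contractibility of $\BB\mathcal{K}_F=\BB(\Sigma F\!\!\downarrow_\star)$ is exactly the hypothesis of Theorem \ref{A} for $\Sigma F$, so $\BB\Sigma F$, and hence $\BB F$, is a homotopy equivalence.

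There is no real difficulty here beyond the identifications of the second paragraph; the one point deserving care is checking that the abstract Grothendieck construction defining $\Sigma F\!\!\downarrow_\star$ and the abstract homotopy fiber $2$-functor $p_*$ collapse to the explicit $\mathcal{K}_F$ and $z'\otimes-$, i.e. that the coherence $2$-cells produced in Section \ref{theB} all reduce to the expected unitor and associator constraints. Since the Example has already recorded the end result, this is routine bookkeeping, and it is essentially the only place where one uses that $\Sigma(\mathcal{M},\otimes)$ and $\Sigma(\mathcal{M}',\otimes)$ are one-object bicategories with hom-categories $\mathcal{M}$ and $\mathcal{M}'$ respectively.
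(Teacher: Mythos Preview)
Your proposal is correct and follows exactly the approach the paper takes: the theorem is stated within an Example that has already set up the identifications $\BB(\mathcal{M},\otimes)=\BB\Sigma(\mathcal{M},\otimes)$, $\mathcal{K}_F=\Sigma F\!\!\downarrow_\star$, and $p_*=z'\otimes-$, and the paper simply says ``from Theorems \ref{B} and \ref{A}, we get the following'' without further argument. Your write-up makes explicit precisely the bookkeeping the paper leaves to the reader.
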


For any monoidal category $(\mathcal{M},\otimes)$, pseudo bidiagrams
of categories over its suspension bicategory,
$$
\mathcal{N}=(\mathcal{N},\chi):\Sigma(\mathcal{M},\otimes)^{\mathrm{op}}\to \Cat,
$$
are interesting to consider, since they can be regarded as a
category $\mathcal{N}$ (the one associated to the unique object of
the suspension bicategory) endowed with a coherente right pseudo
action of the monoidal category $(\mathcal{M},\otimes)$ (see e.g.
\cite[\S 1]{jardine}). Namely, by the functor
$\otimes:\mathcal{N}\times \mathcal{M}\to \mathcal{N}$, which is
defined on objects by $a\otimes x=x^*a$ and on morphism by
$$ (a\overset{f}\to b)\otimes
(x\overset{u}\to
y)=\xymatrix@C=20pt{\big(x^*a\ar[r]^-{x^*\!f}&x^*b\ar[r]^-{u^*b}&y^*b\big)}=
\xymatrix@C=20pt{\big(x^*a\ar[r]^-{u^*\!a}&y^*a\ar[r]^-{y^*f}&y^*b\big),}
$$
together with the coherent natural isomorphisms
$$\begin{array}{l}
\xymatrix{(a\otimes x)\otimes y=y^*x^*a\ar[r]^-{\chi_{x,y}a}&(x\otimes y)^*a=a\otimes (x\otimes y)}\\
\  \xymatrix{a\ar[r]^-{\chi_{_I}a}&I^*a=a\otimes I.}
\end{array}
$$

For each such $(\mathcal{M},\otimes)$-category $\mathcal{N}$, the
cells of the bicategory
$\int_{\Sigma(\mathcal{M},\otimes)}\mathcal{N}$ has the following
easy description: Its objects are the same as the objects of the
category $\mathcal{N}$. A 1-cell $(f,x):a\to b$  is a pair with $x$
an object of $\mathcal{M}$ and $f:a\to b\otimes x$ a morphism in
$\mathcal{N}$, and a 2-cell $$\xymatrix @C=6pt {a
\ar@/^0.7pc/[rr]^{(f,x)} \ar@/_0.7pc/[rr]_{ (g,y)}
\ar@{}[rr]|{\Downarrow\!u}& {} &b }$$ is a morphism $u:x\to y$ in
$\mathcal{M}$ such that the triangle
$$
\xymatrix@C=12pt@R=16pt{&a\ar[rd]^{g}\ar[ld]_{f}&\\ b\otimes x \ar[rr]^{b \otimes u}&&b\otimes y.}
$$
is commutative. Many of the homotopy theoretical properties of the
classifying space of the monoidal category,
$\BB(\mathcal{M},\otimes)$, can actually be more easily reviewed by
using Grothendieck bicategories
$\int_{\Sigma(\mathcal{M},\otimes)}\mathcal{N}$,  instead of the
Borel pseudo simplicial categories
$$E_{(\mathcal{M},\otimes)}\mathcal{N}:\Delta^{\!{\mathrm{op}}}\to
\Cat, \hspace{0.3cm}[p]\mapsto \mathcal{N}\times \mathcal{M}^p
$$as, for example,
Jardine did in \cite{jardine} for $(\mathcal{M},\otimes)$-categories
$\mathcal{N}$.

Thus, one sees, for example, that if the action is such that
multiplication by each object $x$ of $\mathcal{M}$, that is, the
endofunctor $-\otimes x:\mathcal{N}\to \mathcal{N}$, induces a
homotopy equivalence $\BB\mathcal{N}\simeq \BB\mathcal{N}$, then, by
Theorem \ref{th1}, one has an induced homotopy fiber sequence (cf.
\cite[Proposition 3.5]{jardine})
$$\xymatrix{\BB \mathcal{N}\to\BB \int_{\Sigma(\mathcal{M},\otimes)}\mathcal{N} \,\overset{\BB P}\longrightarrow\, \BB(\mathcal{M},\otimes).}$$

In particular, the right action of $(\mathcal{M},\otimes)$ on the
underlying category $\mathcal{M}$ leads to the bicategory
$$\xymatrix{\int_{\Sigma(\mathcal{M},\otimes)}\mathcal{M}=
\Sigma(\mathcal{M},\otimes)\!\!\downarrow_{\star}},
$$ the comma bicategory of the suspension bicategory over its unique object, whose
classifying space is contractible by Lemma \ref{cont} (cf.
\cite[Proposition 3.8]{jardine}). Then, it follows the well-known
result by Mac Lane \cite{mac} and Stasheff \cite{sta}  that there is
a homotopy equivalence
$$\BB \mathcal{M}\simeq \Omega\BB(\mathcal{M},\otimes),$$ between the
classifying space of the underlying category and the loop space of
the classifying space of the monoidal category, whenever
multiplication by each object $x\in \Ob\mathcal{M}$, $y\mapsto
y\otimes x$, induces a homotopy autoequivalence on $\BB\mathcal{M}$
(cf. Example \ref{ombi}). }
\end{example}

\end{document}